\title[Krust-type theorem and deformations of minimal surfaces]{%
Extension of Krust theorem and deformations of minimal surfaces
}
\author[S.~Akamine and H.~Fujino]{
Shintaro Akamine and Hiroki Fujino
}   
\address[Shintaro Akamine]{%
Department of Liberal Arts, College of Bioresource Sciences,
Nihon University, 
1866 Kameino, Fujisawa, Kanagawa, 252-0880, Japan}
\email{akamine.shintaro@nihon-u.ac.jp}
\address[Hiroki Fujino]{%
  Institute for Advanced Research, Graduate School of Mathematics, 
Nagoya University, Chikusa-ku, Nagoya 464-8602, Japan
}
\email{m12040w@math.nagoya-u.ac.jp}
\subjclass[2010]{%
 Primary  53A10;   
 %49Q05: Minimal surfaces (49-XX: Calculus of variations and optimal control; optimization)
%53Axx Classical differential geometry
 %53A10 Minimal surfaces, surfaces with prescribed mean curvature
 Secondary 53B30; 31A05; 31A20}
\keywords{%
    Krust-type theorem,
    minimal surface,
    maximal surface, 
    planar harmonic mapping}%
\thanks{
The first author was partially supported by 
JSPS KAKENHI Grant Number 19K14527,
and the second author by JSPS KAKENHI Grant Number 20K14306.
}
\theoremstyle{plain}
 \newtheorem{theorem}{Theorem}[section]
 \newtheorem{proposition}[theorem]{Proposition}
 \newtheorem{lemma}[theorem]{Lemma}
 \newtheorem{corollary}[theorem]{Corollary}
\theoremstyle{definition}
\theoremstyle{remark}
 \newtheorem{remark}[theorem]{Remark}
 \newtheorem*{remark*}{Remark}
\newtheorem{example}[theorem]{Example}
 \newtheorem*{acknowledgement}{Acknowledgement}
\numberwithin{equation}{section}
\renewcommand{\Re}{\operatorname{Re}}
\renewcommand{\phi}{\varphi}
\newcommand{\Rc}{\mathbb{R}^3(c)}
\newcommand{\RZ}{\mathbb{R}/2\pi \mathbb{Z}}
\newcommand{\transpose}[1]{%
\,  {\vphantom{#1}}^{t}\!{#1}  
%    #1 ^\mathsf{T}   
}
\definecolor{Blue}{rgb}{0,0,1}  
\definecolor{Red}{rgb}{1,0,0}  
\begin{document}
\maketitle

\begin{abstract}
In the minimal surface theory, the Krust theorem asserts that if a minimal surface in the Euclidean 3-space $\mathbb{E}^3$ is the graph of a function over a convex domain, then each surface of its associated family is also a graph. The same is true for maximal surfaces in the Minkowski 3-space $\mathbb{L}^3$.

In this article, we introduce a new deformation family that continuously connects minimal surfaces in $\mathbb{E}^3$ and maximal surfaces in $\mathbb{L}^3$, and prove a Krust-type theorem for this deformation family. This result induces Krust-type theorems for various important deformation families containing the associated family and the L\'opez-Ros deformation.

Furthermore, minimal surfaces in the isotropic 3-space $\mathbb{I}^3$ appear in the middle of the above deformation family. We also prove another type of Krust's theorem for this family, which implies that the graphness of such minimal surfaces in $\mathbb{I}^3$ strongly affects the graphness of deformed surfaces.

The results are proved based on the recent progress of planar harmonic mapping theory.
\end{abstract}

%============================================================INTRODUCTION====
%\tableofcontents

\section{Introduction} \label{sec:1} 
%================= 背景 =================
Minimal surfaces in the Euclidean $3$-space $\mathbb{E}^3$ are interesting objects in the classical differential geometry, and many researchers invented various kinds of deformations of minimal surfaces depending on their respective purposes. In their researches, to observe embeddedness of deformed minimal surfaces often plays an important role, but is generally non-trivial. For example, L\'opez-Ros \cite{LR} gave a characterization of the plane and catenoid as embedded complete minimal surfaces of finite total curvature and genus zero. In their method, it was essential to see embeddedness of some deformation which is now called the {\it L\'opez-Ros deformation} (or the {\it Goursat transformation}).

For another example, the Krust theorem stated below also played an essential role in the conjugate construction of embedded saddle tower by Karcher \cite{Kar}. The Krust theorem deals with graphness of minimal surfaces for a deformation family called the {\it associated family} (or the {\it Bonnet transformation}). Here, note that the embeddedness and graphness are closely related to each other since any surface which can be written as a graph is embedded.

\begin{theorem}[Krust, \cite{Kar} or {\cite[p.122]{DHS}}]\label{thm:Krust}
If a minimal surface is a graph over a convex domain, then each surface of its associated family is also a graph.
\end{theorem}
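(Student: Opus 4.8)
\medskip
\noindent\textbf{Proof idea.}
The plan is to recast the theorem in the language of planar harmonic mappings. Using the Weierstrass representation with height differential $\phi\,dz$ and Gauss map $G$, the vertical projection of the minimal surface onto the base plane $\{x_3=0\}$ is a harmonic mapping $F=h+\overline{g}$ of the parameter domain $D$, with $h'=-\phi G^{2}$ and $g'=\phi$, hence with analytic dilatation $\omega=g'/h'=-G^{-2}$. After fixing the orientation (replacing $F$ by $\overline{F}$ if necessary) we may assume $F$ is sense-preserving, i.e.\ $|\omega|<1$. Being a graph over $\Omega$ is then exactly the statement that $F\colon D\to\Omega$ is univalent, and convexity of $\Omega$ means $F(D)$ is convex. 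In the associated family the height differential is rotated, $\phi\mapsto e^{i\theta}\phi$, while $G$ is unchanged, so the projection of the $\theta$-member is
\[
F_{\theta}=e^{i\theta}h+e^{-i\theta}\overline{g}=e^{i\theta}\bigl(h+\lambda\overline{g}\bigr),\qquad \lambda=e^{-2i\theta},\ |\lambda|=1.
\]
Since the leading rotation $e^{i\theta}$ is an isometry of the plane, the theorem reduces to the following two-dimensional claim: if $F=h+\overline{g}$ is a sense-preserving univalent harmonic mapping with convex image, then $F_{\lambda}:=h+\lambda\overline{g}$ is univalent for every $|\lambda|=1$.

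To attack the claim I would exploit convexity through the shear characterization of Clunie and Sheil-Small. A convex image is convex in every direction $e^{i\alpha}$; applying the shear theorem to $e^{-i\alpha}F=(e^{-i\alpha}h)+\overline{(e^{i\alpha}g)}$ shows, for each $\alpha$, that the analytic function $h-e^{2i\alpha}g$ is univalent and convex in the direction $e^{i\alpha}$. As $\alpha$ runs over $[0,\pi)$ this yields the key intermediate fact that every analytic combination $h-\mu g$ with $|\mu|=1$ is univalent.

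I would then run the shear construction in reverse to return to the harmonic mapping $F_{\lambda}$. Writing $F_{\lambda}=h+\overline{\,\overline{\lambda}\,g\,}$, its co-analytic part is $\overline{\lambda}\,g$ and its dilatation is $\overline{\lambda}\,\omega$, still of modulus $<1$, so $F_{\lambda}$ is locally univalent and sense-preserving. Its associated analytic shear is $h-\overline{\lambda}\,g$, which is exactly one of the univalent, convex-in-a-direction functions produced in the previous step. The converse direction of the shear theorem then upgrades this analytic univalence to univalence of the harmonic mapping $F_{\lambda}$ itself, completing the reduction and hence the proof.

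The main obstacle is that the Clunie--Sheil-Small machinery is classically formulated on the unit disk, whereas the parameter domain $D$ of a minimal graph is an arbitrary (possibly multiply connected) Riemann surface; the delicate point is to make the forward and converse shear arguments, together with the passage through all directions $e^{i\alpha}$, valid in this generality and uniformly in $\lambda$. This is precisely where I expect the recent developments in planar harmonic mapping theory to be needed, either to supply domain-independent versions of the convexity and shear lemmas or to replace them by a direct convexity argument. Secondary care is required to control the orientation and the degenerate locus (for instance $|G|=1$, where $F$ fails to be a local diffeomorphism).
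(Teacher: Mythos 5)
Your reduction to planar harmonic mappings is exactly the paper's route: up to the outer rotation $e^{i\theta}$, the vertical projection of the $\theta$-member of the associated family is $h+\varepsilon\overline{g}$ with $\varepsilon=e^{-2i\theta}$, and graphness is equivalent to univalence of this map (Proposition \ref{prop:graphness_vs_univalence}). The paper then finishes by importing Kalaj's theorem (Theorem \ref{thm:kalaj}, descending from Clunie--Sheil-Small \cite{CS84}): if $h+\overline{g}$ is a sense-preserving convex univalent harmonic map, then $h+\varepsilon\overline{g}$ is close-to-convex, in particular univalent, for every $|\varepsilon|\le 1$; Theorem \ref{thm:Krust} is the special case $(\theta_0,\lambda_0,c_0)=(0,1,1)$, $|\varepsilon|=1$ of Theorem \ref{thm:main_krust1}. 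So the two-dimensional claim you isolate is correct and is precisely the black box the paper uses. (Your worry about general domains is minor: a graph over a convex domain has simply connected parameter domain, and the disk statements transfer by precomposing with a Riemann map, as in Remark \ref{rmk:generality_domain}.)

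The genuine gap is in your proposed proof of that claim by ``running the shear construction in reverse.'' The forward step is fine: convexity of $F$ in every direction gives, for each $\alpha$, that $h-e^{2i\alpha}g$ is univalent and convex in the direction $e^{i\alpha}$. But the shear theorem is an equivalence of the form: $h+\overline{g_0}$ is univalent with image convex \emph{in the direction} $e^{i\beta}$ if and only if $h-e^{2i\beta}g_0$ is univalent with image convex \emph{in that same direction} $e^{i\beta}$. For $F_\lambda=h+\overline{\overline{\lambda}g}$ with $\overline{\lambda}=e^{2i\delta}$, the analytic shear in direction $e^{i\beta}$ is $h-e^{2i(\beta+\delta)}g$, and what your forward step supplies is that this function is convex in the direction $e^{i(\beta+\delta)}$, not in the direction $e^{i\beta}$ that the converse requires; the two agree only when $\delta\equiv 0\pmod{\pi}$, i.e.\ only in the trivial cases $\theta\equiv 0\pmod{\pi}$. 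A domain convex in one direction need not be convex in any other, and univalence of the shear alone, without convexity in the matching direction, does not feed back into univalence of the harmonic map. This is exactly why the actual proofs (Clunie--Sheil-Small Theorem 5.17, Kalaj \cite{Kalaj}) proceed through close-to-convexity and a Kaplan-type boundary argument rather than a reverse shear, and why the conclusion one obtains (and the paper states) is a graph over a close-to-convex, not necessarily convex, domain. To complete your argument you must either reproduce that machinery or cite the result, as the paper does.
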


On the other hand, as has often been pointed out, the simultaneous consideration of  minimal surfaces in $\mathbb{E}^3$ and maximal surfaces in the Minkowski $3$-space $\mathbb{L}^3$ leads to interesting results. Calabi \cite{C} proved a Bernstein-type theorem for maximal surfaces in $\mathbb{L}^3$ by using a one-to-one correspondence between minimal and maximal surfaces, which is known as the classical duality or the {\it Calabi correspondence}. In that context, the Lorentzian version of Theorem \ref{thm:Krust} was also proved in \cite{RLopez}.

%================= 主定理について =================
In this article, we first introduce a more general form of deformation family as below:
\begin{equation} \label{eq:extended_family_intro}
	X_{\theta, \lambda, c}=\Re \bigints^w \left( \begin{array}{c} 1-c\lambda^2 G^2 \\ -i(1+c\lambda^2 G^2) \\ 2\lambda G \end{array} \right) \frac{e^{i\theta}}{\lambda}Fdw.
\end{equation}
Here, $(F,G)$ is a so-called {\it Weierstrass data} of $X_{\theta, \lambda, c}$ defined in Section \ref{subsec:w_rep_in_Rc}, and $(\theta,\lambda,c)\in \mathcal{P}:=\mathbb{R}/2\pi\mathbb{Z} \times (0,+\infty)\times \mathbb{R}$. If we set $\theta = 0$ and $\lambda = 1$, then it turns out that the formula \eqref{eq:extended_family_intro} unifies each of the Weierstrass representation formulas for (i) minimal surfaces in the Euclidean 3-space $\mathbb{E}^3$ ($c=1$), (ii) maximal surfaces in the Minkowski 3-space $\mathbb{L}^3$ ($c= -1$), and (iii) minimal surfaces in the isotropic 3-space $\mathbb{I}^3$ ($c= 0$). For such surfaces in $\mathbb{I}^3$, see \cite{SY,Pember,MaEtal,Sato,Si,Strubecker}. More generally, we can see that each $X_{\theta, \lambda, c}$ is a (possibly singular) zero mean curvature surface in $\Rc := (\mathbb{R}^3, dx^2+dy^2+cdz^2)$ which is isometric to $\mathbb{E}^3$ if $c>0$, $\mathbb{L}^3$ if $c<0$, and is nothing but $\mathbb{I}^3$ if $c=0$ (see Remark \ref{rmk:isometry}). In this sense, the parameter $c$ plays a role that connects $\mathbb{E}^3$ and $\mathbb{L}^3$ continuously, and this kind of techniques can also be seen in other researches (for example, see \cite{Danciger,Pember,UY92}).  
We emphasise that this parameter $c$ leads to remarkable results, in particular, in Section \ref{sec:Krust_2}.

Furthermore, the deformation family $X_{\theta, \lambda, c}$ includes many of historically significant deformations, see Fig.  \ref{fig:def_family_intro}. In fact, the parameter $\theta$ and $\lambda$ stand for the deformations of the associated family and the L\'opez-Ros deformation, respectively. In addition, we can see that this deformation family also contains the above classical duality correspondence.

\begin{figure}[htbp]
       \begin{center}
           \includegraphics[scale=0.37]{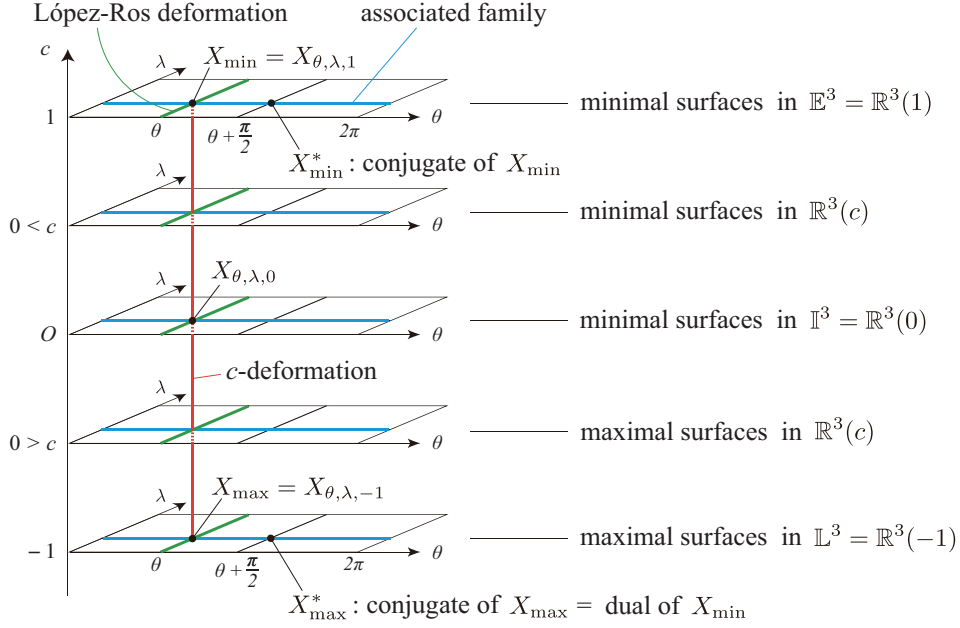} 
       \end{center}
       \caption{$X_{\theta,\lambda,c}$ contains various kinds of deformations.}   \label{fig:def_family_intro}
\end{figure}

In Section \ref{sec:Krust_1}, we prove a Krust-type theorem for the deformation family $X_{\theta,\lambda,c}$. Let us denote the image of $X_{\theta,\lambda,c}$ by $S_{\theta,\lambda,c}$. 

\begin{theorem}\label{thm:main_krust1_intro}
For the Weierstrass data $(F,G)$ of $X_{\theta,\lambda,c}$, suppose that $G$ is not constant and $|G|<1$. If there exists $(\theta_0,\lambda_0,c_0)\in \mathcal{P}$ such that $|c_0 \lambda_0^2|\leq 1/\| G \|_{\infty}^2$ and $S_{\theta_0,\lambda_0,c_0}$ is a graph over a convex domain, then $S_{\theta,\lambda,c}$ is a graph over a close-to-convex domain whenever $(\theta,\lambda,c)\in\mathcal{P}$ satisfies $|c\lambda^2|\leq |c_0 \lambda_0^2|$.
\end{theorem}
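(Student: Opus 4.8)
The plan is to translate graphness into univalence of a planar harmonic mapping, and then to recognize that, as $(\theta,\lambda,c)$ ranges over the admissible region, the horizontal projection of $X_{\theta,\lambda,c}$ is obtained from a single fixed harmonic map by scaling its co-analytic part by a constant of modulus at most one. The Krust-type conclusion then follows from a univalence principle for such perturbations.

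First I would compute the projection of $X_{\theta,\lambda,c}$ onto the $(x,y)$-plane, regarding it as a complex-valued function $f$. A direct computation from \eqref{eq:extended_family_intro} shows that $f$ is the harmonic map $f=h+\overline{g}$ with
\[
h'=\frac{e^{i\theta}}{\lambda}F,\qquad g'=-c\lambda e^{i\theta}FG^2,\qquad \omega_f:=\frac{g'}{h'}=-c\lambda^2G^2 .
\]
The hypotheses ($|G|<1$, the bound $|c\lambda^2|\le 1/\|G\|_\infty^2$, and the non-constancy of $G$) force, via the maximum principle, $|\omega_f|=|c\lambda^2|\,|G|^2<1$ on the interior. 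Hence $f$ is sense-preserving, and $S_{\theta,\lambda,c}$ is a graph over a domain $\Omega$ exactly when $f$ is univalent with image $\Omega$; the shape of $\Omega$ (convex, close-to-convex) is read off from $f$. This reduces the theorem to a statement about planar harmonic mappings.

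The key observation is that the dependence on the parameters is only through constants. Since $h'=(\lambda_0/\lambda)\,e^{i(\theta-\theta_0)}h_0'$ and $g'=(c\lambda/(c_0\lambda_0))\,e^{i(\theta-\theta_0)}g_0'$, up to additive constants (irrelevant for univalence) we have $h=\mu h_0$ and $g=\nu g_0$ with $\mu=(\lambda_0/\lambda)e^{i(\theta-\theta_0)}$ and $\nu=(c\lambda/(c_0\lambda_0))e^{i(\theta-\theta_0)}$. Therefore
\[
f=\mu\bigl(h_0+\beta\,\overline{g_0}\bigr),\qquad \beta=\frac{\overline{\nu}}{\mu}=\frac{c\lambda^2}{c_0\lambda_0^2}\,e^{-2i(\theta-\theta_0)},\qquad |\beta|=\frac{|c\lambda^2|}{|c_0\lambda_0^2|}\le 1,
\]
where the last inequality is exactly the hypothesis $|c\lambda^2|\le|c_0\lambda_0^2|$, and where $f_0=h_0+\overline{g_0}$ is the base projection ($\beta=1$, $\mu=1$). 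Thus every member of the family is, up to the nonzero similarity factor $\mu$, obtained from $f_0$ by replacing its co-analytic part $\overline{g_0}$ with $\beta\,\overline{g_0}$.

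Finally I would invoke the harmonic-mapping principle that scaling the co-analytic part of a convex univalent harmonic map by a constant $\beta$ with $|\beta|\le 1$ yields a univalent map whose image is close-to-convex; the boundary case $|\beta|=1$ is precisely the harmonic-mapping form of the classical Krust theorem, and the case $|\beta|<1$ is the extension supplied by the recent univalence criteria for planar harmonic mappings. Applying this to $f_0$, which by assumption is univalent onto a convex domain, shows $h_0+\beta\overline{g_0}$ is univalent onto a close-to-convex domain; composing with the similarity $\mu$ preserves both univalence and close-to-convexity, so $f$ is univalent onto a close-to-convex domain and $S_{\theta,\lambda,c}$ is a graph over it. The main obstacle is exactly this harmonic-mapping principle: one must show that perturbing the dilatation by a factor of modulus at most one cannot destroy univalence but only weakens convexity to close-to-convexity. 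I would either cite the relevant univalence theorem from the planar harmonic mapping literature or reprove it by a shear argument, controlling the analytic combinations $h_0\pm g_0$ through the argument principle.
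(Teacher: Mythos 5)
Your proposal is correct and follows essentially the same route as the paper: reduce graphness to univalence of the horizontal projection $f_{\theta,\lambda,c}=h+c\lambda^2e^{-2i\theta}\overline{g}$, observe that varying $(\theta,\lambda,c)$ in the admissible range only multiplies the co-analytic part of the base map $f_0$ by a constant $\varepsilon=(c\lambda^2/(c_0\lambda_0^2))\,e^{-2i(\theta-\theta_0)}\in\overline{\mathbb{D}}$, and invoke the fact (Kalaj's theorem, Theorem 4.1 in the paper) that a convex orientation-preserving $h_0+\overline{g_0}$ yields close-to-convex $h_0+\varepsilon\overline{g_0}$ for all $|\varepsilon|\le 1$ --- exactly the ``harmonic-mapping principle'' you defer to the literature. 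The only detail the paper adds is the degenerate case $c_0=0$, where your $\beta$ is undefined; there the hypothesis $|c\lambda^2|\le|c_0\lambda_0^2|$ forces $c=0$ and the conclusion is immediate since $f_{\theta,\lambda,0}=h$ is itself convex.
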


\noindent
As a corollary, by taking $(\theta_0,\lambda_0,c_0)=(0,1,1)$, Theorem \ref{thm:main_krust1_intro} simultaneously induces classical Krust's theorem (see Theorem \ref{thm:Krust}) and the Krust-type theorem for the L\'opez-Ros deformation obtained by Dorff in \cite[Corollary 3.5]{Dorff}. 

Furthermore, we give an another type of Krust's theorem for $X_{\theta,\lambda,c}$ in Section \ref{sec:Krust_2}, where we can see that the graphness of the minimal surfaces $X_{\theta,\lambda,0}$ in $\mathbb{I}^3$ strongly affects the graphness of $X_{\theta,\lambda,c}$.

\begin{theorem}\label{thm:main_krust2_intro}
Suppose that $G$ is not constant and $|G|<1$. If the minimal surface $S_{\theta_0,\lambda_0,0}$ in $\mathbb{I}^3$ is a graph over a convex domain for some $(\theta_0,\lambda_0)$, then $S_{\theta,\lambda,c}$ is a graph over a close-to-convex domain for any $(\theta,\lambda,c)\in\mathcal{P}$ with $|c\lambda^2|\leq 1/\| G \|_{\infty}^2$. In particular, the minimal and the maximal surfaces $S_{\theta, 1, \pm 1}$ are graphs.
\end{theorem}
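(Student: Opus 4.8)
The plan is to reduce the graphness of $S_{\theta,\lambda,c}$ to the univalence of the planar harmonic mapping obtained by projecting onto the $xy$-plane, and then to exploit the special feature of the isotropic slice $c=0$. First I would record (or quote from the computation already needed for Theorem \ref{thm:main_krust1_intro}, since it concerns the same family) that the first two components of \eqref{eq:extended_family_intro} define a harmonic mapping $f_{\theta,\lambda,c}=X^1_{\theta,\lambda,c}+iX^2_{\theta,\lambda,c}$ whose complex derivatives are $\partial_w f_{\theta,\lambda,c}=e^{i\theta}F/\lambda$ and $\partial_{\bar w}f_{\theta,\lambda,c}=\overline{-c\lambda G^2 e^{i\theta}F}$. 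Hence its Weierstrass decomposition $f=h+\bar g$ has analytic part $h=h_{\theta,\lambda}$ with $h'_{\theta,\lambda}=e^{i\theta}F/\lambda$, which is \emph{independent of $c$}, and second dilatation $\omega=g'/h'=-c\lambda^2 G^2$. Graphness over the $xy$-plane is exactly univalence of $f_{\theta,\lambda,c}$, and the type of the base domain is the type of the image $f_{\theta,\lambda,c}(D)$.

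The decisive observation is that the isotropic level $c=0$ is conformal: there $\omega\equiv 0$, so $f_{\theta_0,\lambda_0,0}=h_{\theta_0,\lambda_0}$ is \emph{holomorphic}. Thus the hypothesis that $S_{\theta_0,\lambda_0,0}$ is a graph over a convex domain says precisely that the analytic function $h_{\theta_0,\lambda_0}$ is univalent with convex image---a strictly stronger statement than merely having a convex harmonic image, and this is exactly what frees up the parameter $c$. Next I would use $h'_{\theta,\lambda}=\frac{\lambda_0}{\lambda}e^{i(\theta-\theta_0)}h'_{\theta_0,\lambda_0}$, so that $h_{\theta,\lambda}$ differs from $h_{\theta_0,\lambda_0}$ by a complex-affine map $z\mapsto az+b$; since rotations, dilations and translations preserve convexity, \emph{every} $h_{\theta,\lambda}$ is univalent onto a convex domain, for all $(\theta,\lambda)$.

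It then remains to control the dilatation and invoke the shearing machinery. Under the hypothesis $|c\lambda^2|\leq 1/\|G\|_\infty^2$, and because $G$ is nonconstant so that $|G|<\|G\|_\infty$ in the interior by the maximum principle, one gets $|\omega|=|c\lambda^2|\,|G|^2<|c\lambda^2|\,\|G\|_\infty^2\leq 1$ pointwise; hence $f_{\theta,\lambda,c}$ is sense-preserving. I would then apply the Clunie--Sheil-Small--type shearing result underlying Section \ref{sec:Krust_1}: a sense-preserving harmonic mapping whose analytic part maps univalently onto a convex domain is itself univalent with close-to-convex image. With analytic part $h_{\theta,\lambda}$ this yields that $f_{\theta,\lambda,c}$ is univalent onto a close-to-convex domain, i.e. $S_{\theta,\lambda,c}$ is a graph over such a domain. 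For the final assertion I would set $\lambda=1$, $c=\pm1$: since $|G|<1$ forces $\|G\|_\infty\leq 1$, the constraint $|c\lambda^2|=1\leq 1/\|G\|_\infty^2$ holds, and $S_{\theta,1,1}$, $S_{\theta,1,-1}$ are genuine minimal and maximal surfaces, hence graphs.

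I expect the main obstacle to be the correct invocation of the shearing lemma: this step genuinely requires the analytic part to be \emph{convex}, not merely close-to-convex, and the whole reason for anchoring the argument at the isotropic level $c_0=0$ is that it is the unique place in the family where the horizontal projection is conformal, so that ``image convex'' upgrades to ``analytic part convex.'' I would also need to be slightly careful to transport the disk-based harmonic mapping results to the general simply connected domain of $(F,G)$ by a conformal change of coordinates, and to verify that the boundary case $|\omega|=1$ never occurs in the interior, which is exactly what the nonconstancy of $G$ guarantees.
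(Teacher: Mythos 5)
Your argument is correct and follows essentially the same route as the paper: both reduce graphness to univalence of the planar harmonic mapping $f_{\theta,\lambda,c}=h+c\lambda^2e^{-2i\theta}\overline{g}$, use the fact that at $c=0$ the horizontal projection is conformal so the hypothesis says the analytic part $h$ is univalent with convex image, verify sense-preservation from $|\omega|=|c\lambda^2|\,|G|^2<1$ via the maximum principle (nonconstancy of $G$), and conclude close-to-convexity from a shearing-type theorem with convex analytic part. The only cosmetic difference is the citation: the paper invokes Partyka--Sakan's theorem, which in the case $\varepsilon_0=0$ (i.e.\ $h$ itself convex) reduces to exactly the Clunie--Sheil-Small statement you use.
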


\noindent
Owing to this result, we can also prove the following Krust-type theorem which can discuss the graphness of the surfaces $S_{\theta,\lambda,c}$ even when the original minimal surface does not satisfy the convexity assumption in Theorem \ref{thm:Krust}.

\begin{corollary}\label{cor:rephrasing_Krust2_intro}
Assume that the Weierstrass data $(F,G)$ satisfies that 
\vspace*{0.5em}
\begin{enumerate}\setlength{\parskip}{0.5em}
	\item[$(\mathrm{i})$] $G$ is not constant and $|G|<1$,
	\item[$(\mathrm{ii})$] $\displaystyle h:=\int^w F dw$ is univalent and its image is convex.
\end{enumerate}
\vspace*{0.5em}
\noindent
Then, $S_{\theta,\lambda,c}$ is a graph over a close-to-convex domain for any $(\theta,\lambda,c)\in\mathcal{P}$ with $|c\lambda^2|\leq 1/\| G \|_{\infty}^2$.
\end{corollary}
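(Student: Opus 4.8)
The plan is to deduce Corollary~\ref{cor:rephrasing_Krust2_intro} directly from Theorem~\ref{thm:main_krust2_intro}. Hypothesis~(i) here is identical to the standing assumption of that theorem, so the only thing to verify is that hypothesis~(ii) forces the isotropic surface $S_{\theta_0,\lambda_0,0}$ to be a graph over a convex domain for \emph{some} $(\theta_0,\lambda_0)$; the theorem then yields the stated conclusion verbatim.

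First I would compute the projection of $X_{\theta,\lambda,0}$ onto the base plane of $\mathbb{I}^3$. Setting $c=0$ in \eqref{eq:extended_family_intro}, its first two coordinates are
\begin{equation*}
	\left(\Re\int^w \frac{e^{i\theta}}{\lambda}F\,dw,\ \Re\int^w \frac{-ie^{i\theta}}{\lambda}F\,dw\right)
	=\frac{1}{\lambda}\bigl(\Re(e^{i\theta}h),\ \Im(e^{i\theta}h)\bigr),
\end{equation*}
using $\Re(-iz)=\Im(z)$ and the definition $h=\int^w F\,dw$. Identifying the base plane with $\mathbb{C}$, the planar projection of $X_{\theta,\lambda,0}$ is thus the holomorphic map $\frac{e^{i\theta}}{\lambda}h$.

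Next I would observe that $\frac{e^{i\theta}}{\lambda}h$ is obtained from $h$ by a rotation through angle $\theta$ followed by a dilation by the factor $1/\lambda>0$. Since both univalence and convexity of the image are preserved under such conformal affine maps of $\mathbb{C}$, hypothesis~(ii) (namely that $h$ is univalent with convex image) implies that $\frac{e^{i\theta}}{\lambda}h$ is univalent with convex image for \emph{every} $(\theta,\lambda)$. In the isotropic space $\mathbb{I}^3$, whose degenerate direction is the $z$-axis, a surface being a graph over a domain $\Omega$ means exactly that this planar projection is injective with image $\Omega$; hence $S_{\theta,\lambda,0}$ is a graph over a convex domain for every $(\theta,\lambda)$, and in particular for one choice $(\theta_0,\lambda_0)$.

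Finally I would apply Theorem~\ref{thm:main_krust2_intro} with that $(\theta_0,\lambda_0)$: its hypothesis has just been verified, so it gives that $S_{\theta,\lambda,c}$ is a graph over a close-to-convex domain for every $(\theta,\lambda,c)\in\mathcal{P}$ with $|c\lambda^2|\le 1/\|G\|_\infty^2$, which is precisely the assertion of the corollary. I do not anticipate a serious obstacle: the entire substance of the statement is already carried by Theorem~\ref{thm:main_krust2_intro}, and the only point demanding a little care is confirming that being a graph in $\mathbb{I}^3$ coincides with univalence-plus-convexity of the $xy$-projection, together with matching conditions~(i)--(ii) to that theorem's hypotheses.
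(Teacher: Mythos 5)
Your proposal is correct and follows essentially the same route as the paper: the authors likewise observe that $f_{\theta,\lambda,0}=h$ and that the planar projection of $X_{\theta,\lambda,0}$ is $\tfrac{e^{i\theta}}{\lambda}h$, so that hypothesis (ii) is equivalent to $S_{\theta_0,\lambda_0,0}$ being a graph over a convex domain, and then they invoke Theorem~\ref{thm:main_krust2_intro} exactly as you do.
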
 

\noindent 
We also see in Section \ref{subsec:non_graph} that the estimation $|c\lambda^2|\leq 1/\| G \|_{\infty}^2$ is optimal.

The above theorems and the corollary are obtained in completely different ways from the proof of classical Krust's theorem. To prove them, we fully utilize recent developments of the planar harmonic mapping theory.\\

%================= 論文の構成 =================
At the end of the introduction, we give the organization of this paper:

In Section \ref{sec:2}, we give a Weierstrass-type representation formula for zero mean curvature surfaces in $\Rc$. In addition to this, we generalize the concepts of the associated family, the L\'opez-Ros deformation, and the classical duality correspondence to $\Rc$.

In Section \ref{sec:def_family}, by combining all of the deformations defined in Section \ref{sec:2}, we introduce a more general deformation family. After this, we formulate the notations and notion of graphness. Further, we explain the connection between the surface theory and the planar harmonic mapping theory.

As already mentioned above, we prove Theorem \ref{thm:main_krust1_intro} and Theorem \ref{thm:main_krust2_intro} in Section \ref{sec:Krust_1} and Section \ref{sec:Krust_2}, respectively. Moreover, many of important corollaries are explained in each section. In particular, the sharpness of the estimation of the main theorems are discussed in Section \ref{subsec:non_graph}.

Finally, we give examples in Section \ref{sec:example} to see how to apply the main theorems.

%============================================================ SECTION ====
\section{Preliminaries} \label{sec:2} 
In this section, we give a notion of deformations of minimal surfaces passing through different ambient spaces.

\subsection{Weierstrass-type representation in different ambient spaces} \label{subsec:w_rep_in_Rc}

 Let us denote the 3-dimensional vector space $\mathbb{R}^3$ with the metric $\langle\ ,\ \rangle_c=dx^2+dy^2+cdt^2$ by $\mathbb{R}^3(c)$, where $(x,y,t)$ are the canonical coordinates of $\mathbb{R}^3$ and $c\in \mathbb{R}$ is a parameter.
 
 Let $\Sigma$ be a Riemann surface and let $X={}^t(X_1,X_2,X_3)\colon \Sigma \to \mathbb{R}^3(c)$ be a non-constant harmonic mapping. Suppose that at any point $p\in \Sigma$ there exists a complex coordinate neighbourhood $(D, w)$ such that the derivatives $\phi_j=\partial{X_j}/\partial{w}, j=1,2,3$ satisfy
 \begin{equation}\label{eq:conformal_regular}
\phi_1^2+\phi_2^2+c\phi_3^2=0,\quad  |\phi_1|^2+|\phi_2|^2+c|\phi_3|^2\not \equiv 0.
 \end{equation}
If we put $w=u+iv$, then the relation \eqref{eq:conformal_regular} represents 
\[
\Big{\langle} \frac{\partial{X}}{\partial{u}},\frac{\partial{X}}{\partial{u}} \Big{\rangle}_c=\Big{\langle} \frac{\partial{X}}{\partial{v}},\frac{\partial{X}}{\partial{v}} \Big{\rangle}_c(\geq 0)\quad \text{ and }\quad \Big{\langle} \frac{\partial{X}}{\partial{u}},\frac{\partial{X}}{\partial{v}} \Big{\rangle}_c =0.
\]
Hence we can see that the induced metric of $X\colon\Sigma \to \mathbb{R}^3(c)$ is positive definite except at some singular points, which corresponds to the spacelike condition of $X$ in $\mathbb{R}^3(c)$ with $c\leq 0$. The harmonicity of $X$ implies that the mean curvature of $X$ vanishes identically. 
Then $X\colon\Sigma \to \mathbb{R}^3(c)$ is said to be a {\it generalized zero mean curvature surface} in $\mathbb{R}^3(c)$, which is a surface in $\mathbb{R}^3(c)$ whose mean curvature vanishes identically possibly with singular points. %for $c\geq 0$ or a {\it generalized maximal surface} in $\mathbb{R}^3(c)$ for $c< 0$. 
As the special cases, this notion is known in \cite{O} for minimal surfaces with branch points in the Euclidean 3-space $\mathbb{R}^3(1)=\mathbb{E}^3$ and in \cite{ER} for maximal surfaces with singularities in the Minkowski 3-space $\mathbb{R}^3(-1)=\mathbb{L}^3$. It also contains a notion of zero mean curvature surfaces possibly with singular points in the isotropic 3-space $\mathbb{R}^3(0)=\mathbb{I}^3$ (see also \cite{Sato,Si}).
   From now on, unless there is confusion, we will omit the word ``generalized'' of a generalized zero mean curvature surface and will use the abbreviation ZMC for ``zero mean curvature''.%refer to it simply as a minimal surface in $\mathbb{R}^3(c)$.

Similarly to the classical minimal surface theory, a Weierstrass-type representation formula for ZMC surfaces in $\mathbb{R}^3(c)$ is stated as follows.

\begin{proposition}\label{prop:W}
Let $Fdw$ be a non-zero holomorphic 1-form on $\Sigma$ and $G$ a meromorphic function on $\Sigma$ such that $c|G|^2\not \equiv -1$ and $G^2Fdw$ is holomorphic. Assume that the holomorphic 1-forms
\begin{equation}\label{eq:1form}
\alpha_1=(1-cG^2)Fdw, \quad \alpha_2=-i(1+cG^2)Fdw, \quad \alpha_3=2GFdw
\end{equation}
on $\Sigma$ have no real periods. Then the mapping 
\begin{equation}\label{eq:w-formula}
X=X(c)=\mathrm{Re}\int {}^t(\alpha_1,\alpha_2,\alpha_3)
\end{equation}
gives a ZMC surface in $\mathbb{R}^3(c)$. Conversely, any ZMC surface in $\mathbb{R}^3(c)$ is of the form \eqref{eq:w-formula} provided that the surface is not part of the horizontal plane $t=\text{constant}$.
\end{proposition}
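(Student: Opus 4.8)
The plan is to prove both directions by the standard Weierstrass calculus: the forward implication is a direct substitution, while the converse recovers $(F,G)$ from the coordinate derivatives and requires a little care at the degenerate locus.

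\emph{Forward direction.} Since the $\alpha_j$ are holomorphic and have no real periods, each $\Re\int\alpha_j$ is single-valued on $\Sigma$, so $X$ is well defined; being locally the real part of a holomorphic map, every component $X_j$ is harmonic. Writing $\alpha_j=a_j\,dw$ one has $\phi_j=\partial X_j/\partial w=\tfrac12 a_j$, that is
\[
\phi_1=\tfrac12(1-cG^2)F,\qquad \phi_2=-\tfrac{i}{2}(1+cG^2)F,\qquad \phi_3=GF.
\]
I would then check the two relations in \eqref{eq:conformal_regular} by substitution. The identity $(1-cG^2)^2-(1+cG^2)^2=-4cG^2$ gives $\phi_1^2+\phi_2^2=-cG^2F^2=-c\phi_3^2$, which is the conformality condition. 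For the second, the parallelogram identity $|1-cG^2|^2+|1+cG^2|^2=2(1+c^2|G|^4)$ yields
\[
|\phi_1|^2+|\phi_2|^2+c|\phi_3|^2=\tfrac12|F|^2\bigl(1+c|G|^2\bigr)^2,
\]
which is $\not\equiv0$ precisely because $F\not\equiv0$ and $c|G|^2\not\equiv-1$; this is exactly where those two hypotheses enter. Harmonicity together with conformality then produces a ZMC surface, the singular points being the zeros of $F$ and of $1+c|G|^2$.

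\emph{Converse direction.} Given a non-constant harmonic $X$ satisfying \eqref{eq:conformal_regular}, the functions $\phi_j=\partial X_j/\partial w$ are holomorphic (harmonicity means $\partial_{\bar w}\phi_j=0$) and $\phi_j\,dw=\partial X_j$ are globally defined holomorphic $1$-forms with $\phi_1^2+\phi_2^2+c\phi_3^2=0$. I would set $F=\phi_1+i\phi_2$ and $G=\phi_3/F$, so that $F\,dw=\partial X_1+i\,\partial X_2$ is holomorphic. Factoring the quadratic relation as $(\phi_1+i\phi_2)(\phi_1-i\phi_2)=-c\phi_3^2$ gives $\phi_1-i\phi_2=-cG^2F$; hence $(1-cG^2)F=2\phi_1$, and likewise $-i(1+cG^2)F=2\phi_2$ and $2GF=2\phi_3$, so the forms \eqref{eq:1form} satisfy $\alpha_j=2\,\partial X_j$ and $\Re\int\alpha_j=X_j$ up to a translation, reproducing \eqref{eq:w-formula}. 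The no-real-period condition holds because $\Re\oint\alpha_j=\oint dX_j=0$ by single-valuedness of $X$.

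\emph{Main obstacle.} The delicate point is to guarantee that $(F,G)$ is admissible, i.e. $F\not\equiv0$, $G$ meromorphic, and $G^2F\,dw$ holomorphic. If $F\equiv0$, the quadratic relation forces $c\phi_3^2\equiv0$; for $c\neq0$ this gives $\phi_3\equiv0$, so $X_3$ is constant and the image lies in a horizontal plane $t=\text{const}$, which is excluded by hypothesis. For $c\neq0$, meromorphy of $G$ and holomorphy of $G^2F\,dw$ follow at once from $cG^2F=-(\phi_1-i\phi_2)$, whose right-hand side is holomorphic. The genuinely subtle case is the isotropic one $c=0$: there the relation degenerates to $(\phi_1+i\phi_2)(\phi_1-i\phi_2)=0$, so one factor vanishes identically, and I expect the main work to be choosing the orientation of $\Sigma$ (equivalently, the non-vanishing factor) so that $F=\phi_1+i\phi_2\not\equiv0$, and then controlling $G=\phi_3/F$ and $G^2F=\phi_3^2/F$ at the zeros of $F$. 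This bookkeeping along the singular locus, rather than the elementary algebra, is where the care is needed.
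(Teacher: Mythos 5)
The paper itself offers no proof of Proposition~\ref{prop:W}: it is presented as the standard Weierstrass-type representation, with the cases $c=1$, $c=-1$, $c=0$ delegated to the cited literature, so there is no argument of the authors to compare yours against line by line. Your computations are the standard ones and they check out. The forward direction is complete: with $\phi_j=\tfrac12 a_j$ the identities $\phi_1^2+\phi_2^2+c\phi_3^2=0$ and $|\phi_1|^2+|\phi_2|^2+c|\phi_3|^2=\tfrac12|F|^2\bigl(1+c|G|^2\bigr)^2$ are exactly right, and the latter is $\not\equiv0$ precisely when $F\not\equiv0$ and $c|G|^2\not\equiv-1$ (using that the zero set of $F$ is discrete). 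The converse for $c\neq0$ is also complete: $F=\phi_1+i\phi_2$ and $cG^2F=-(\phi_1-i\phi_2)$ recover admissible data, and $F\equiv0$ forces $c\phi_3^2\equiv0$, hence a horizontal plane, which is excluded.

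The one genuine gap is the case $c=0$ of the converse, which you flag but do not close. Two comments. First, your worry about $G^2F\,dw=\phi_3^2\,dw/F$ at the zeros of $F$ is moot: for $c=0$ the term $cG^2$ drops out of $\alpha_1,\alpha_2$, so only $\alpha_3=2GF\,dw=2\phi_3\,dw$ needs to be holomorphic, which it is; the surface is of the form \eqref{eq:w-formula} regardless of whether $\phi_3^2/F$ has poles. Second, the orientation issue is real and cannot be waved away. For $c=0$ the relation $(\phi_1+i\phi_2)(\phi_1-i\phi_2)\equiv0$ forces one factor to vanish identically, and if it is $\phi_1+i\phi_2$ that vanishes while $\phi_3\not\equiv0$, the surface is not horizontal and yet is not of the form \eqref{eq:w-formula} for the given complex structure: for example $X(u+iv)={}^t(2u,-2v,2u)$ on $\mathbb{C}$ is a non-horizontal ZMC surface in $\mathbb{R}^3(0)$ whose horizontal projection $X_1+iX_2=2\bar w$ is antiholomorphic, whereas \eqref{eq:w-formula} always yields $X_1+iX_2=\int F\,dw$, which is holomorphic. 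One must either pass to the conjugate complex structure on $\Sigma$ (equivalently precompose with $w\mapsto\bar w$) or read the converse with an implicit orientation convention; stating and carrying out this normalization is what is missing from your sketch.
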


We call the pair $(F, G)$ the {\it Weierstrass data} of $X(c)$. By Proposition \ref{prop:W}, we see that the surfaces $\{X(c)\}_{c\in \mathbb{R}}$ share the same Weierstrass data $(F, G)$ unless $c$ satisfies $c|G|^2 \equiv -1$, which occurs only when $G$ is constant. In particular, when $c=1$ the formula \eqref{eq:w-formula} is nothing but the representation formula for minimal surfaces admitting branch points, when $c=-1$ the formula \eqref{eq:w-formula} is the representation for spacelike maximal surfaces with singularities derived by Estudillo-Romero \cite{ER}, Kobayashi \cite{K2} and Umehara-Yamada \cite{UY1}, and when $c=0$ the formula \eqref{eq:w-formula} is the representation for minimal surfaces in the isotropic 3-space $\mathbb{I}^3=\mathbb{R}^3(0)$, see \cite{SY,Pember,MaEtal,Sato,Si} (cf. \cite{Strubecker}) and their references.\\

From the next subsection, we give definitions of some deformations and transformations of surfaces in $\mathbb{R}^3(c)$ based on the classical minimal surface theory in $\mathbb{E}^3=\mathbb{R}^3(1)$.

\subsection{Associated family/Bonnet transformation}
The {\it associated family} $\{X_\theta(c) \}_{\theta \in  \mathbb{R}/2\pi \mathbb{Z}}$ of the surface $X(c)$ in $\mathbb{R}^3(c)$ is defined by the equation
\begin{equation}\label{eq:associated}
X_{\theta}=X_{\theta}(c)=\mathrm{Re}\bigint^w \left( \begin{array}{c} 1-cG^2\\ -i(1+cG^2)\\ 2G \end{array} \right) e^{i\theta}Fdw,
 \end{equation}
where $X=X_0$ is the original ZMC surface in $\mathbb{R}^3(c)$. The associated family of minimal surfaces was originally introduced by Bonnet \cite{Bonnet} and hence this bending  transformation from $X_0$ to $X_\theta$ is also called the {\it Bonnet transformation} and the parameter $\theta$ is sometimes called the {\it Bonnet angle}. In particular, $X_{
\frac{\pi}{2}}$ and $X_0$ are said to be the {\it conjugates} of each other. We denote $X_{\frac{\pi}{2}}$ by $X^*$.

The Bonnet transformation corresponds to changing the Weierstrass data from $(F,G)$ to $(e^{i\theta}F,G)$. Since the first fundamental form of \eqref{eq:associated} is written as $g_c=|F|^2(1+c|G|^2)^2dwd\overline{w}$, this deformation is an isometric deformation of the original one.

\subsection{L\'opez-Ros deformation/Goursat transformation} The {\it L\'opez-Ros deformation} $\{X_\lambda(c)\}_{\lambda >0}$ of $X(c)$ for each $c\in \mathbb{R}$ is a deformation changing the Weierstrass data of $X(c)$  
from $(F,G)$ to $(\frac{1}{\lambda}F,\lambda G)$ for $\lambda>0$. This deformation was introduced in \cite{LR} for minimal surfaces in $\mathbb{E}^3$, and in this case, the transformation $X_1(1)$ to $X_\lambda(1)$ is nothing but the Goursat transformation of the minimal surface $X_1(1)$:
\begin{equation*}
 X_\lambda(1)=\mathrm{Re}\bigint^w \left( \begin{array}{c} 1-\lambda^2 G^2 \\ -i\left(1+\lambda^2 G^2\right) \\ 2\lambda G \end{array} \right) \cfrac{F}{\lambda}dw,
\end{equation*}
which was originally introduced by Goursat \cite{Goursat} (see also \cite[p.120]{DHS}).

\subsection{$c$-deformation in $\mathbb{R}^3(c)$}
Changing the parameter $c\in \mathbb{R}$, the formula \eqref{eq:w-formula} gives a deformation of ZMC surfaces passing through different ambient spaces $\mathbb{R}^3(c)$. We call the deformation $\{X(c)\}_{c\in \mathbb{R}}$ the {\it $c$-deformation}. 

Obviously, the formula \eqref{eq:w-formula} connects the minimal surface $X(1)$ in $\mathbb{E}^3$ and the maximal surface $X(-1)$ in $\mathbb{L}^3$. Moreover, the point-wise relation $X(0)=(X(c)+X(-c))/2$ means that the minimal surface $X(0)$ in the isotropic 3-space $\mathbb{I}^3$ appears as the intermediate between $X(c)$ and $X(-c)$ for every $c$.

Under the L\'opez-Ros deformation in the previous subsection,  the height function of $X(c)$, that is, the third coordinate function of the surface, is preserved and a curvature line (resp. an asymptotic line) remains being a curvature line (resp. an asymptotic line). Impressively, the $c$-deformation is also furnished with the same properties as follows: 

\begin{proposition}\label{prop:c-deform}
The $c$-deformation $\{X(c)\}_{c\in \mathbb{R}}$ preserves its height function, and a curvature line $($resp. an asymptotic line$)$ remains being a curvature line $($resp. an asymptotic line$)$ under the $c$-deformation for any $c\neq 0$. 
\end{proposition}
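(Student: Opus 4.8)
The plan is to treat the two assertions separately. The height function is the third coordinate $X_3(c)=\Re\int \alpha_3 = \Re\int 2GF\,dw$, and since $\alpha_3=2GF\,dw$ in \eqref{eq:1form} carries no dependence on $c$ (only $\alpha_1,\alpha_2$ involve $c$), the third coordinate function is literally identical for every member of the family. Thus this part is immediate from the Weierstrass representation in Proposition \ref{prop:W}.

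For the line fields I would work in the conformal coordinate $w=u+iv$ and reduce everything to the Hopf differential. Since the first fundamental form is $g_c=|F|^2(1+c|G|^2)^2\,dw\,d\overline w$ and each $X(c)$ is ZMC, the second fundamental form is the real part of a quadratic differential, $II=2\,\Re(Q_c\,dw^2)$ with $Q_c=\langle X_{ww},N\rangle_c$. Writing out the asymptotic-line and curvature-line equations in isothermal coordinates then identifies the asymptotic lines as the integral curves of $\Re(Q_c\,dw^2)=0$ and the curvature lines as those of $\Im(Q_c\,dw^2)=0$. The crucial point is that both fields depend on $Q_c$ only up to its argument modulo $\pi$; in particular they are unchanged when $Q_c$ is multiplied by any nowhere-vanishing \emph{real} function. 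So it suffices to compute $Q_c$ and exhibit it as a $c$-independent quadratic differential times a real factor.

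To compute $Q_c$ I would avoid the explicit unit normal by using the $c$-cross product $\times_c$ characterised by $\langle a,b\times_c d\rangle_c=\det(a,b,d)$; this is exactly where $c\neq0$ enters, since the third component of $\times_c$ carries a factor $1/c$ and no such normal direction exists for the degenerate metric of $\mathbb{I}^3$. With $\widetilde N=X_u\times_c X_v$ one gets the unnormalized coefficient $\langle X_{ww},\widetilde N\rangle_c=\det(X_{ww},X_u,X_v)$, an ordinary determinant. Substituting $X_u=\phi+\overline\phi$, $X_v=i(\phi-\overline\phi)$ and $X_{ww}=\phi_w$, where $\phi=\tfrac12 F\,(1-cG^2,-i(1+cG^2),2G)$, multilinearity collapses this to $-2i\det(\phi_w,\phi,\overline\phi)$; factoring out $F,\overline F$ and using $\det(A,A,\overline A)=0$ leaves a single $3\times3$ determinant. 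Carrying it out I expect
\[
\langle X_{ww},\widetilde N\rangle_c=|F|^2\,(1+c|G|^2)^2\,FG',
\]
so that $Q_c\,dw^2$ equals $FG'\,dw^2$ times the real, nowhere-vanishing factor $\pm|F|^2(1+c|G|^2)^2/|\widetilde N|_c$. As $FG'\,dw^2$ is independent of $c$, the defining equations $\Re(FG'\,dw^2)=0$ and $\Im(FG'\,dw^2)=0$ of the asymptotic and curvature line fields coincide for all $c\neq0$, which is the assertion.

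I would expect the main obstacle to be purely computational: the $3\times3$ determinant above, together with the bookkeeping needed to confirm that the scalar factor $(1+c|G|^2)^2$ is real (hence may be discarded) and that normalizing $\widetilde N$ contributes only a real scalar. The conceptual content is entirely in the observation that the asymptotic and curvature line fields see $Q_c$ only up to a real multiple, so that the $c$-dependence of $Q_c$ is invisible to them.
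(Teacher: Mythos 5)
Your argument is correct, but it proceeds quite differently from the paper. The paper gives no formal proof of this proposition: the height-function claim is treated as immediate from the fact that $\alpha_3=2GF\,dw$ is $c$-independent (exactly your first observation), and the line-field claim is justified by Remark \ref{rmk:isometry}, which composes $X(c)$ with the ambient isometry $\Phi(c)\colon{}^t(x,y,t)\mapsto{}^t(x,y,\sqrt{|c|}\,t)$ to identify it with the surface $X(\pm1)$ having Weierstrass data $(F,\sqrt{|c|}\,G)$, i.e.\ with a L\'opez--Ros/Goursat deformation up to homothety; since ambient isometries and homotheties preserve curvature and asymptotic lines, the classical invariance of these line fields under the L\'opez--Ros deformation transfers to the $c$-deformation. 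Your route instead computes the Hopf differential directly, and the computation does come out as you predict: dropping the term of $\phi_w$ proportional to $\phi$ and evaluating the remaining $3\times3$ determinant gives $\det(\phi_w,\phi,\overline{\phi})=\tfrac{i}{2}|F|^2FG'(1+c|G|^2)^2$, hence $\langle X_{ww},\widetilde N\rangle_c=|F|^2(1+c|G|^2)^2FG'$, so $Q_c\,dw^2$ is the $c$-independent differential $FG'\,dw^2$ times a real nonvanishing scalar on the regular set (for $c<0$ the points with $1+c|G|^2=0$ are singular points where the line fields are undefined anyway, and the timelike normal is still normalized by a positive real, so the scalar remains real). Your approach buys self-containedness and uniformity: it treats all $c\neq0$ at once, whereas the isometry argument only compares surfaces with the same sign of $c$ and implicitly needs the additional classical fact that the minimal and maximal Hopf differentials attached to the same data $(F,G)$ coincide. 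The paper's route buys brevity and a conceptual explanation of \emph{why} the $c$-deformation and the L\'opez--Ros deformation share these properties.
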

Here, we consider the notation of curvature lines and asymptotic lines of surfaces in $\mathbb{R}^3(c)$ only for $c \neq 0$ because $\mathbb{R}^3(0)$ is not even a pseudo-Riemannian manifold.

\begin{remark} \label{rmk:isometry}
When we fix the sign of the parameter $c$ such as $c>0$, the composition of the surface $X(c)$ in $\mathbb{R}^3(c)$ with the Weierstrass data $(F,G)$ and the isometry
\[
\Phi(c)\colon \mathbb{R}^3(c) \longrightarrow \mathbb{R}^3(1)=\mathbb{E}^3,\quad {}^t(x,y,t)\longmapsto {}^t(x,y,\sqrt{c}t)%\Phi(c)(x,y,t)=(x,y,\sqrt{c}t)
\] 
coincides with the surface $X(1)$ in $\mathbb{E}^3$ with the Weierstrass data $(F,\sqrt{c}G)$. A similar normalization is also valid for the case $c<0$. 
On the other hand, up to homotheties, the L\'opez-Ros deformation corresponds to the changing of the Weierstrass data from $(F,G)$ to $(F,\lambda G), \lambda>0$. This is the reason why the $c$-deformation and the L\'opez-Ros deformation share the same properties as in Proposition \ref{prop:c-deform}. However, changing the sign of $c$ in the $c$-deformation, which corresponds to changing ambient spaces will play an essential role in this paper. 
\end{remark}

\subsection{A classical  duality between surfaces in $\mathbb{E}^3$ and $\mathbb{L}^3$}
A ZMC surface $X(c)$ in $\mathbb{R}^3(c)$ is determined by the triplet of holomorphic 1-forms $(\alpha_1, \alpha_2, \alpha_3)$ in \eqref{eq:1form}. By \eqref{eq:conformal_regular}, the transformation
\[
(\alpha_1,\alpha_2,\alpha_3)\to (\alpha_1,\alpha_2,i\alpha_3)
\]
gives a ZMC surface in $\mathbb{R}^3(-c)$ unless $|\alpha_1|^2+|\alpha_2|^2-c|\alpha_3|^2\not \equiv 0$. We call this surface the \textit{dual} of $X(c)$, and denote it by $X^d(c)$.
For the case $c=1$, this transformation gives the classical duality of minimal surfaces in $\mathbb{E}^3$ and maximal surfaces in $\mathbb{L}^3$, discussed in many literatures, for example see \cite{LLS, AL, UY1, Lee}. As known in \cite{Lee} (see also \cite[Proposition 2.2]{AF}), it is a global version of the duality which was used by Calabi \cite{C} to prove the Bernstein theorem for maximal surfaces in $\mathbb{L}^3$.
Notably, the surfaces $X(c)$ in $\mathbb{R}^3(c)$ and $X(-c)$ in $\mathbb{R}^3(-c)$ are related by the above duality as follows.

\begin{proposition} \label{prop:dual_vs_conj}
ZMC surfaces $X(c)$ and $X(-c)$ defined by \eqref{eq:1form} and \eqref{eq:w-formula} are related by the equation 
\[
X^d(c)=J\circ X^*(-c),
\]
where $J$ is the counterclockwise rotation by angle $\pi/2$ in the $xy$-plane.
\end{proposition}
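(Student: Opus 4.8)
The plan is to reduce the statement to a comparison of explicit Weierstrass integrals and to verify the asserted identity one coordinate function at a time. First I would read off from \eqref{eq:1form} the triple of holomorphic $1$-forms $(\alpha_1,\alpha_2,\alpha_3)$ attached to $X(c)$, form the dual data $(\alpha_1,\alpha_2,i\alpha_3)$ (which, as noted just after \eqref{eq:1form}, satisfies the ZMC condition in $\mathbb{R}^3(-c)$ precisely because $\alpha_1^2+\alpha_2^2+c\alpha_3^2=0$), and write out by definition
\[
X^d(c)=\Re\int {}^t\bigl((1-cG^2)Fdw,\ -i(1+cG^2)Fdw,\ 2iGFdw\bigr).
\]
Abbreviating the complex antiderivatives $P=\int(1-cG^2)Fdw$, $Q=\int(1+cG^2)Fdw$, $R=\int 2GFdw$ and using $\Re(iz)=-\Im(z)$, this reads $X^d(c)=(\Re P,\ \Im Q,\ -\Im R)$.

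Separately I would compute $X^*(-c)=X_{\pi/2}(-c)$ directly from \eqref{eq:associated}: replace $c$ by $-c$ in the integrand and multiply by $e^{i\pi/2}=i$, so the integrand becomes ${}^t\bigl(i(1+cG^2),\ (1-cG^2),\ 2iG\bigr)Fdw$. Taking real parts gives $X^*(-c)=(-\Im Q,\ \Re P,\ -\Im R)$ with the same $P,Q,R$ as above.

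With both coordinate triples in hand, the comparison is immediate: the third entries coincide (consistent with the height-preserving statement of Proposition \ref{prop:c-deform}), and the first two entries of $X^d(c)$ are obtained from those of $X^*(-c)$ by the constant linear interchange $(a,b,d)\mapsto(b,-a,d)$, i.e.\ a $\pi/2$-rotation in the $xy$-plane fixing the $t$-axis. Since this map is a fixed linear isometry, it commutes with $\int$ and with $\Re$, so integrating and taking real parts transports it intact and yields $X^d(c)=J\circ X^*(-c)$. Equivalently, one can run the argument entirely at the level of $1$-forms, observing that $(1-cG^2)Fdw$ is at once the first dual form and the second form of $X^*(-c)$, while $-i(1+cG^2)Fdw$ is the second dual form and the negative of the first form of $X^*(-c)$; this avoids any period discussion until the very end.

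I expect the only genuinely delicate points to be bookkeeping, not conceptual. The principal one is tracking the exchanges between $\Re$ and $\Im$ forced by the factors of $i$ entering through both the duality ($\alpha_3\mapsto i\alpha_3$) and the conjugation ($\theta=\pi/2$), since it is exactly these signs that pin down the \emph{sense} of the rotation and identify it with the stated $J$; a careless sign here would produce the opposite rotation. The second, minor, point is that $X^d(c)$ and $X^*(-c)$ are defined as indefinite integrals, so the identity is an equality of maps only after a common normalization: I would fix a single base point on $\Sigma$ (or invoke the no-real-period hypothesis of Proposition \ref{prop:W}, which makes both sides single-valued) so that the two antiderivatives are normalized identically, whereupon the fixed isometry $J$ carries one normalization to the other and the equality $X^d(c)=J\circ X^*(-c)$ holds on the nose.
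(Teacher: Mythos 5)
Your computation is correct and follows essentially the same route as the paper's own proof: both write out the coordinate functions of $X^d(c)$ and $X^*(-c)$ explicitly from the Weierstrass $1$-forms and observe that they differ by the fixed rotation $(a,b)\mapsto(b,-a)$ in the $xy$-plane (your $(\Re P,\Im Q,-\Im R)$ versus $(-\Im Q,\Re P,-\Im R)$ is exactly the paper's $(\Re\psi_1,\Re\psi_2,-\Im\psi_3)$ versus $(-\Re\psi_2,\Re\psi_1,-\Im\psi_3)$). Your added remark about fixing a common base point for the indefinite integrals is a harmless refinement the paper leaves implicit.
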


\begin{proof}
Let us denote $X(c)$ in \eqref{eq:w-formula} by ${}^t(\Re (\psi_1),\Re (\psi_2), \Re (\psi_3))$.  The dual $X^d(c)$ of $X(c)$ is written as ${}^t(\mathrm{Re}(\psi_1),\mathrm{Re}(\psi_2),-\mathrm{Im}(\psi_3))$. On the other hand, by  \eqref{eq:1form} and \eqref{eq:w-formula}, $X^*(-c)$ is written as
\begin{align*}
X^*(-c)&=\mathrm{Re}\bigint^w \left( \begin{array}{c} 1+cG^2 \\ -i(1-cG^2) \\ 2G \end{array} \right) iFdw \\
&=\mathrm{Re}\bigint^w \left( \begin{array}{c} i(1+cG^2) \\ 1-cG^2 \\ 2iG \end{array} \right)Fdw = {}^t(-\mathrm{Re}(\psi_2),\mathrm{Re}(\psi_1),-\mathrm{Im}(\psi_3)).
\end{align*}
Hence, we obtain the desired relation $J\circ X^*(-c)=X^d(c)$.
\end{proof}

%============================================================ SECTION ====
\section{Deformation Family and its Graphness} \label{sec:def_family}
First, we give a unified form of the three types of deformations in the previous section.

%===============================SUBSECTION====
\subsection{Deformation family} \label{subsec:def_family}
For a given Weierstrass data $(F,G)$, let us consider the three parameter family of surfaces $X_{\theta, \lambda, c}\colon \Sigma \to \Rc$ defined by
\begin{equation} \label{eq:extended_family}
	X_{\theta, \lambda, c}=\Re \bigints^w \left( \begin{array}{c} 1-c\lambda^2 G^2 \\ -i(1+c\lambda^2 G^2) \\ 2\lambda G \end{array} \right) \frac{e^{i\theta}}{\lambda}Fdw.
\end{equation}
Here, $\theta \in \RZ$ is the Bonnet angle, $\lambda\in(0,+\infty)$, and $c\in\mathbb{R}$ as in Section \ref{sec:2}. By its definition, $X_{\theta, \lambda, c}$ is a ZMC surface in $\Rc$ in the sense of Section \ref{subsec:w_rep_in_Rc}.

Let $\mathcal{P} = \RZ \times (0, +\infty) \times \mathbb{R}$ be the parameter space of $X_{\theta,\lambda,c}$.
The three parameter family $\{X_{\theta,\lambda,c}\}_{\theta,\lambda,c}$ unifies the deformations explained in the previous sections (see also Fig. \ref{fig:def_family_intro}).

%===============================SUBSECTION====
\subsection{Graphness} \label{subsec:graphness}
Let $(x,y,t)\in \Rc$ be the canonical coordinate.  We say that $S_{\theta, \lambda, c}:=X_{\theta,\lambda,c}(\Sigma)\subset \Rc$ is a \textit{graph} if there is a domain $\Omega=\Omega_{\theta, \lambda, c} \subset xy$-plane and a function $\phi=\phi_{\theta, \lambda, c} \colon \Omega \to \mathbb{R}$ such that $S_{\theta, \lambda, c}={\rm graph}(\phi)=\{(x, y , \phi(x,y)) \mid (x,y)\in \Omega\}$.

Hereafter, we discuss the \textit{graphness} of $S_{\theta,\lambda, c}$, that is, the property that $S_{\theta,\lambda,c}$ is whether a graph or not. In particular, we consider the case where the Riemann surface $\Sigma$ which is a domain of $X_{\theta,\lambda,c}$ is a simply connected proper subdomain $D\subset \mathbb{C}$.\\

Under the identification that $xy$-plane $\cong \mathbb{C}$, $(x,y)\mapsto x+iy$, we assume that $\Rc \cong \mathbb{C}\times \mathbb{R}$. Elementary calculations show the following lemma and proposition that connect the surface theory with the planar harmonic mapping theory. In particular, the univalent harmonic mapping theory is directly applicable to the problems on the graphness of $S_{\theta,r,c}$.

\begin{lemma}
For a given Weierstrass data $(F,G)$, let
\begin{eqnarray*}
	h=\int^w F dw,\ \ g=-\int^w G^2F dw, \ \ T=\int^w 2GFdw.
\end{eqnarray*}
Further, we put $t=\Re(T)$. Then
\begin{equation}\label{eq:planar_rep_X}
	X_{\theta, \lambda, c}=\left( \begin{array}{c} \displaystyle \frac{e^{i\theta}}{\lambda}h+c\lambda e^{-i\theta}\overline{g} \\[10pt] \Re(e^{i\theta}T) \end{array} \right)=\left( \begin{array}{c} \displaystyle \frac{e^{i\theta}}{\lambda}\left( h+c\lambda^2e^{-2i\theta}\overline{g}\right) \\[10pt] t\cos\theta - t^{\ast}\sin\theta \end{array} \right),
\end{equation}
where $t^{\ast}$ denotes the conjugate harmonic function of $t$.
\end{lemma}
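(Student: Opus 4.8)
The plan is a direct computation, organized by the three components of $X_{\theta,\lambda,c}$ under the identification $\Rc \cong \mathbb{C}\times\mathbb{R}$. First I would dispatch the height (third) component, which is immediate: the third entry of the integrand in \eqref{eq:extended_family} is $2\lambda G\cdot (e^{i\theta}/\lambda)F = 2Ge^{i\theta}F$, so integrating and recalling $T=\int^w 2GF\,dw$ shows the third coordinate equals $\Re\left(e^{i\theta}\int^w 2GF\,dw\right)=\Re(e^{i\theta}T)$. Writing $T=t+it^{\ast}$ with $t^{\ast}$ the conjugate harmonic function of $t=\Re(T)$, and expanding $e^{i\theta}=\cos\theta+i\sin\theta$, yields at once $\Re(e^{i\theta}T)=t\cos\theta-t^{\ast}\sin\theta$, matching the third entry on both sides of \eqref{eq:planar_rep_X}.

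The substantive step is the horizontal part. Letting $\Psi_1,\Psi_2$ denote the complex antiderivatives of the first two entries of the integrand, I would use $\int^w F\,dw=h$ and $\int^w G^2F\,dw=-g$ to obtain $\Psi_1=\frac{e^{i\theta}}{\lambda}(h+c\lambda^2 g)$ and $\Psi_2=-i\frac{e^{i\theta}}{\lambda}(h-c\lambda^2 g)$. The horizontal coordinate of the surface is then $\Re(\Psi_1)+i\Re(\Psi_2)$. Setting $A=\frac{e^{i\theta}}{\lambda}h$ and $B=c\lambda e^{i\theta}g$, so that $\Psi_1=A+B$ and $\Psi_2=-i(A-B)$, the key observation is the elementary identity $\Re(-iz)=\Im(z)$, which gives $\Re(\Psi_2)=\Im(A)-\Im(B)$.

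Combining the two real parts, $\Re(\Psi_1)+i\Re(\Psi_2)=[\Re A+i\Im A]+[\Re B-i\Im B]=A+\overline{B}$. Since $c$ and $\lambda$ are real, $\overline{B}=c\lambda e^{-i\theta}\overline{g}$, so the horizontal coordinate equals $\frac{e^{i\theta}}{\lambda}h+c\lambda e^{-i\theta}\overline{g}$, which is the first expression in \eqref{eq:planar_rep_X}. Factoring out $e^{i\theta}/\lambda$ and using $\frac{e^{i\theta}}{\lambda}\cdot c\lambda^2 e^{-2i\theta}\overline{g}=c\lambda e^{-i\theta}\overline{g}$ produces the second, factored form.

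The only point requiring care — and arguably the conceptual heart of the lemma — is the recombination $\Re(\Psi_1)+i\Re(\Psi_2)=A+\overline{B}$, in which taking real parts of the two separate horizontal coordinates produces a genuinely anti-holomorphic term $\overline{g}$. This is precisely the mechanism that exhibits the horizontal projection of the surface as a planar harmonic mapping of the form \emph{holomorphic} $+$ \emph{anti-holomorphic}, which is what makes the univalent harmonic mapping theory applicable in the sequel. Everything else is routine bookkeeping with the definitions of $h$, $g$, and $T$.
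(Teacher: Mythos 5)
Your computation is correct and is exactly the "elementary calculation" the paper leaves implicit (the paper states the lemma without proof, remarking only that it follows from elementary calculations): the third coordinate is immediate, and the identity $\Re(\Psi_1)+i\Re(\Psi_2)=A+\overline{B}$ correctly produces the anti-holomorphic term $c\lambda e^{-i\theta}\overline{g}$. Nothing is missing.
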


\noindent
The graphness of $S_{\theta,\lambda, c}$ is characterized by the univalence of the following planar harmonic mapping.
\begin{proposition}\label{prop:graphness_vs_univalence}
Under the above notations, let us define
\[
	f_{\theta, \lambda, c}=h+c\lambda^2e^{-2i\theta}\overline{g}.
\]
Then, $S_{\theta, \lambda, c}$ is a graph if and only if $f_{\theta,\lambda,c}$ is univalent.
\end{proposition}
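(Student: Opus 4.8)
The plan is to establish the equivalence by translating the geometric condition ``$S_{\theta,\lambda,c}$ is a graph'' directly into an analytic condition on the planar map, using the explicit formula \eqref{eq:planar_rep_X}. By definition in Section \ref{subsec:graphness}, $S_{\theta,\lambda,c}$ is a graph precisely when its projection onto the $xy$-plane is a bijection onto some domain $\Omega$ and the height is expressible as a single-valued function of $(x,y)\in\Omega$. Reading off \eqref{eq:planar_rep_X}, the $xy$-component of $X_{\theta,\lambda,c}$ is exactly $\frac{e^{i\theta}}{\lambda}f_{\theta,\lambda,c}$ under the identification $xy\text{-plane}\cong\mathbb{C}$, while the height component is the real single-valued function $t\cos\theta-t^{\ast}\sin\theta$ on $D$. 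So my first step is to record that the projection $\pi\colon S_{\theta,\lambda,c}\to xy\text{-plane}$ is, up to the fixed nonzero factor $e^{i\theta}/\lambda$, nothing but the planar harmonic map $f_{\theta,\lambda,c}\colon D\to\mathbb{C}$.

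Next I would argue both implications. For the forward direction, suppose $S_{\theta,\lambda,c}=\operatorname{graph}(\phi)$ over $\Omega$. Then the projection $\pi$ restricted to the surface is a bijection onto $\Omega$, and since $X_{\theta,\lambda,c}$ parametrizes $S_{\theta,\lambda,c}$ by $D$, the composite $\pi\circ X_{\theta,\lambda,c}=\frac{e^{i\theta}}{\lambda}f_{\theta,\lambda,c}$ must itself be injective on $D$; multiplying by the invertible constant $\lambda e^{-i\theta}$ shows $f_{\theta,\lambda,c}$ is univalent. For the converse, assume $f_{\theta,\lambda,c}$ is univalent. Then $\frac{e^{i\theta}}{\lambda}f_{\theta,\lambda,c}$ is a homeomorphism from $D$ onto a domain $\Omega\subset xy\text{-plane}$, so for each $(x,y)\in\Omega$ there is a unique $w\in D$ with $\frac{e^{i\theta}}{\lambda}f_{\theta,\lambda,c}(w)=x+iy$; defining $\phi(x,y)$ to be the height value $t(w)\cos\theta-t^{\ast}(w)\sin\theta$ at that unique preimage yields a well-defined single-valued function whose graph is exactly $S_{\theta,\lambda,c}$. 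I would note that $\phi$ inherits continuity from the continuity of $f_{\theta,\lambda,c}^{-1}$ and of the harmonic height function.

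The one point requiring genuine care — and the step I expect to be the main obstacle — is the implicit claim that the height is automatically single-valued once the horizontal projection is injective. This is where the harmonic structure is essential: the height $t\cos\theta-t^{\ast}\sin\theta$ is a well-defined function on the parameter domain $D$ (here I use that the associated $1$-form $\alpha_3=2GFdw$ has no real periods, so that $t=\Re(T)$ and its conjugate are single-valued on $D$), and therefore, once the projection is a bijection, the height descends to a function on $\Omega$ with no multivaluedness obstruction. In other words, the only thing that can fail for a ZMC surface of this form to be a graph is injectivity of the horizontal projection, which is exactly univalence of $f_{\theta,\lambda,c}$. I would conclude by emphasizing that the factor $e^{i\theta}/\lambda$ is a nonzero complex constant and hence a conformal isomorphism of the $xy$-plane, so it neither creates nor destroys univalence, leaving $f_{\theta,\lambda,c}$ itself as the clean criterion.
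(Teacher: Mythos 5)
Your argument is correct and is essentially the paper's own: the authors state this proposition without a written proof, asserting that it follows from ``elementary calculations'' once the representation \eqref{eq:planar_rep_X} is in hand, and your identification of the horizontal component of $X_{\theta,\lambda,c}$ with $\frac{e^{i\theta}}{\lambda}f_{\theta,\lambda,c}$ (together with the observation that the nonzero constant factor and the single-valuedness of the height are harmless) is precisely that calculation. Nothing further is needed.
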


In general, a planar harmonic mapping $f\colon D \to \mathbb{C}$ can be decomposed uniquely (up to additive constants) into the form $f=h+\overline{g}$ by using holomorphic functions $h$ and $g$. The meromorphic function $\omega=\omega_f:=\overline{f_{\overline{w}}}/f_w=g'/h'$ is called the {\it analytic dilatation} (or the {\it second Beltrami coefficient}) of $f$. This analytic dilatation is one of the most important quantities in the theory of planar harmonic mappings (for details, see \cite{D}). In particular, if $f$ is univalent and orientation-preserving, then $\omega$ is holomorphic and $|\omega|<1$ on $D$ since the Jacobian of $f$ satisfies $Jf=|f_w|^2-|f_{\overline{w}}|^2=|h'|^2-|g'|^2>0$.  The next formula indicates that the analytic dilatation of $f_{\theta, \lambda, c}$ corresponds to $G$ of the Weierstrass data, and  plays an important role in the later sections.

\begin{lemma}\label{lem:dilatation}
Let $\omega_{\theta, \lambda, c}$ be the analytic dilatation of $f_{\theta, \lambda, c}$. Then,
\begin{equation}%\label{eq:dilatation}
	\omega_{\theta, \lambda, c} = -c\lambda^2e^{2i\theta}G^2.
\end{equation}
\end{lemma}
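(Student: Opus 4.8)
The plan is to reduce the statement to the definition of the analytic dilatation together with the explicit holomorphic/antiholomorphic splitting of $f_{\theta,\lambda,c}$ recorded just above. Recall that for any harmonic mapping written in its canonical form $f = h_0 + \overline{g_0}$ with $h_0, g_0$ holomorphic, the analytic dilatation is $\omega_f = \overline{f_{\overline{w}}}/f_w = g_0'/h_0'$. So the entire task is to read off the correct $h_0$ and $g_0$ for $f_{\theta,\lambda,c} = h + c\lambda^2 e^{-2i\theta}\overline{g}$ and then differentiate.

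First I would identify the canonical decomposition. The holomorphic part is plainly $h_0 = h$. For the antiholomorphic part, the only point requiring care is that the constant multiplying $\overline{g}$ must be absorbed into a genuinely holomorphic function before differentiating. Since $c$ and $\lambda^2$ are real, we have $c\lambda^2 e^{-2i\theta}\overline{g} = \overline{\,c\lambda^2 e^{2i\theta} g\,}$, so that $g_0 = c\lambda^2 e^{2i\theta} g$ is the holomorphic function whose conjugate appears in $f_{\theta,\lambda,c}$.

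Next I would differentiate and substitute the data from the preceding lemma, where $h = \int^w F\,dw$ and $g = -\int^w G^2 F\,dw$ give $h' = F$ and $g' = -G^2 F$. Then
\[
\omega_{\theta,\lambda,c} = \frac{g_0'}{h_0'} = \frac{c\lambda^2 e^{2i\theta} g'}{h'} = \frac{c\lambda^2 e^{2i\theta}(-G^2 F)}{F} = -c\lambda^2 e^{2i\theta} G^2,
\]
which is exactly the claimed identity; note that $F$ cancels, so that the formula depends on the Weierstrass data only through $G$.

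As for the main obstacle, there really is none of substance: the computation is a one-line differentiation once the splitting is in place. The only genuine subtlety is the conjugation of the real-weighted phase factor, i.e.\ writing the antiholomorphic term as $\overline{g_0}$ with $g_0$ holomorphic, and keeping track of the sign coming from $g' = -G^2 F$. A sanity check I would run is the classical case $\theta = 0$, $\lambda = 1$, $c = 1$, where $\omega = -G^2$ recovers the standard dilatation of the minimal-surface harmonic map, consistent with the $|G|<1$ orientation-preserving hypothesis used throughout the later sections.
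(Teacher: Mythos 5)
Your computation is correct and is exactly the elementary verification the paper intends (the lemma is stated without proof there as a direct consequence of the definitions): you correctly absorb the real factor $c\lambda^2$ and the phase into the holomorphic function $g_0=c\lambda^2 e^{2i\theta}g$, and the substitution $h'=F$, $g'=-G^2F$ gives the claimed dilatation. No gaps; the sanity check at $(\theta,\lambda,c)=(0,1,1)$ matches the classical minimal-surface case used in Remark \ref{rmk:necess_cond}.
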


\begin{remark}\label{rmk:necess_cond}
Hereafter, we usually assume $|G|<1$ for Weierstrass data of deformation families. This condition is supposed to be a necessary condition for $S_{0, 1, \pm 1}$ to be a graph under a certain normalization: Assume that $S_{0,1,\pm 1}$ is a graph. Then $f:=f_{0,1,\pm 1}$ is univalent and thus Lewy's theorem implies that its Jacobian satisfies $Jf>0$ or $Jf<0$. Here, Lewy's theorem states that a planar harmonic mapping is locally univalent at some point if and only if its Jacobian does not vanish at that point (see, \cite[Section 2.2]{D}). On the other hand, it holds that $Jf=|f_{w}|^2-|f_{\overline{w}}|^2=|h'|^2(1-|G|^4)$ by Lemma \ref{lem:dilatation}. Therefore, we have $|G|<1$ (if $Jf>0$) or $|G|>1$ (if $Jf<0$). When $|G|>1$, by commuting the $x$-axis and the $y$-axis, we can normalize the situation to the case of $|G|<1$.

We emphasize that the condition $|G|<1$ is not a sufficient condition of surfaces to be graphs. For example, if we consider a minimal surface in $\mathbb{E}^3$ with the Weierstrass data $(F,G)$, then $G$ corresponds to the Gauss map of $X_{0,1,1}$ via the stereographic projection and $|G|<1$ merely means that the image of the Gauss map is contained in a hemisphere.

\end{remark}

%============================================================SECTION====
\section{Krust Type Theorem Part 1} \label{sec:Krust_1}
In this and the next sections, we discuss conditions for $S_{\theta,\lambda,c}$ to be a graph. As for the theorems which deal with the graphness of surfaces, it is well-known as the Krust theorem that if a minimal (resp. maximal) surface is a graph over a convex domain, then each member of its associated family is also a graph (over a close-to-convex domain, see \cite[Corollary 3.4]{Dorff}). Further, the Krust-type theorem for the L\'opez-Ros deformation is also obtained by Dorff in \cite[Corollary 3.5]{Dorff}. In this section, we prove a Krust-type theorem for the family $\{S_{\theta,\lambda,c}\}$ including the above theorems.\\

We first recall some notions of convexities. A domain $\Omega \subset \mathbb{C}$ is said to be a \textit{convex domain} if any two points $z_1, z_2 \in \Omega$ are connected by a segment in $\Omega$, that is, $[z_1,z_2]\subset \Omega$. Further, $\Omega$ is called a \textit{close-to-convex domain} if its compliment $\mathbb{C}\setminus \Omega$ can be represented by a union of half lines that are disjoint except possibly for their initial points.

In addition to these concepts, there are various kinds of convexity for domains. For example, $\Omega$ is said to be a \textit{starlike domain} with respect to a point $a\in \Omega$ if the segment $[z,a]$ is contained in $\Omega$ for any $z\in \Omega$. The (ordinary) convexity implies the starlike convexity, and the starlike convexity implies the close-to-convexity in general. Thus, $\Omega$ is convex, then it is close-to-convex. For details of these convexities, see \cite{Pommerenke}.
 
 We say that a planar harmonic mapping $f\colon D \to \mathbb{C}$ is \textit{convex} (resp. \textit{close-to-convex}) if $f$ is univalent and $f(D)$ is convex (resp. close-to-convex). To obtain our result, we use the following theorem (see also \cite{CS84}).

\begin{theorem}[Kalaj, {\cite[Theorem 2.1]{Kalaj}}]\label{thm:kalaj}
Let $f=h+\overline{g}\colon \mathbb{D}\to\mathbb{C}$ be a univalent orientation-preserving harmonic mapping, where $\mathbb{D}$ denotes the unit disk $\mathbb{D}=\{w\in\mathbb{C}\mid |w|<1\}$. If $f$ is convex, then $f_{\varepsilon}:=h+\varepsilon \overline{g}$ is close-to-convex and orientation-preserving for all $\varepsilon \in \overline{\mathbb{D}}$.
\end{theorem}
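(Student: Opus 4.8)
The plan is to prove Kalaj's statement entirely within planar harmonic mapping theory, reducing it to a univalence statement for a family of conformal ``shears'' and then invoking the Clunie--Sheil-Small shear correspondence. Write $\omega=g'/h'$ for the dilatation of $f$; since $f$ is univalent and orientation-preserving, $\omega$ is holomorphic with $|\omega|<1$ on $\mathbb{D}$. The orientation-preserving part of the conclusion is immediate: writing $f_\varepsilon=h+\varepsilon\overline{g}=h+\overline{\overline{\varepsilon}g}$, its dilatation is $\overline{\varepsilon}\,\omega$, of modulus $|\varepsilon|\,|\omega|<1$ for every $\varepsilon\in\overline{\mathbb{D}}$, so each $f_\varepsilon$ is locally univalent and orientation-preserving by Lewy's theorem. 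The real content is global univalence together with close-to-convexity of the image.

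The technical heart I would isolate is the claim that \emph{convexity of $f$ forces the analytic shear $h-\mu g$ to be univalent on $\mathbb{D}$ for every $\mu$ with $|\mu|\le1$}. Since a convex domain is convex in every direction, $f$ is convex in each direction $e^{i\alpha}$, and applying the Clunie--Sheil-Small directional shear theorem to the rotated map $e^{-i\alpha}f$ shows that $h-e^{2i\alpha}g$ is conformal and univalent for every $\alpha$; that is, $h-\mu g$ is univalent for all $\mu\in\partial\mathbb{D}$. To pass from the circle to the disk I would study the difference quotient $\rho(z_1,z_2)=(h(z_1)-h(z_2))/(g(z_1)-g(z_2))$ on the connected set $\{(z_1,z_2)\in\mathbb{D}^2:z_1\ne z_2\}$, valued in $\widehat{\mathbb{C}}$. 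Univalence of $h-\mu g$ for $|\mu|=1$ says exactly that $\rho$ omits $\partial\mathbb{D}$, so by connectedness its image lies either in $\{|\mu|<1\}$ or in $\{|\mu|>1\}\cup\{\infty\}$; letting $z_2\to z_1$ yields the diagonal limit $\rho\to 1/\omega$, of modulus $>1$ (the degenerate case $g\equiv\text{const}$ being trivial), which pins the image in $\{|\mu|>1\}\cup\{\infty\}$. Hence $\rho$ omits all of $\overline{\mathbb{D}}$, which is precisely univalence of $h-\mu g$ for every $|\mu|\le1$.

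Granting the claim, global univalence of $f_\varepsilon$ is a two-point argument: $f_\varepsilon(z_1)=f_\varepsilon(z_2)$ gives $A=-\varepsilon\overline{B}$ with $A=h(z_1)-h(z_2)$ and $B=g(z_1)-g(z_2)$, so $|A|=|\varepsilon|\,|B|\le|B|$; taking $\mu=A/B$, which has modulus $\le1$ (or treating $B=0$ directly), makes $h-\mu g$ identify $z_1$ and $z_2$, forcing $z_1=z_2$. For the close-to-convexity of $f_\varepsilon(\mathbb{D})$ I would transfer back through the shear: the conformal shear of $f_\varepsilon$ is $h-\overline{\varepsilon}g$, already known to be univalent, and close-to-convexity of the harmonic map then follows from the close-to-convex form of the Clunie--Sheil-Small correspondence once $h-\overline{\varepsilon}g$ is shown close-to-convex. \textbf{I expect this last step to be the main obstacle}: for $|\mu|<1$ one writes $h-\mu g$ as a convex combination of $h$ and the direction-convex maps $h-e^{i\beta}g$, but convex combinations of close-to-convex conformal maps are close-to-convex only relative to a \emph{common} convex reference map, so the care lies in producing that reference map. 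By contrast, the univalence that the graphness applications actually require (via Proposition \ref{prop:graphness_vs_univalence}) is already delivered by the connectedness argument above.
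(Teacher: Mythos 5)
First, a point of comparison: the paper does not prove this statement at all — Theorem \ref{thm:kalaj} is imported from Kalaj \cite{Kalaj} (building on Clunie--Sheil-Small \cite{CS84}) and used as a black box — so there is no in-paper argument to measure yours against, and I judge the proposal on its own. The parts you actually carry out are correct. Orientation-preservation is immediate as you say. Your key claim, univalence of $h-\mu g$ for all $|\mu|\le 1$, is proved cleanly: directional convexity plus the shear theorem handles $|\mu|=1$; the difference quotient $\rho$ is a well-defined continuous map into $\widehat{\mathbb{C}}$ (you should remark explicitly that $h$ and $g$ cannot simultaneously identify two distinct points, since $h-g$ is univalent — otherwise $\rho$ has a $0/0$ point); and the connectedness/diagonal-limit argument correctly forces $\rho$ to omit $\overline{\mathbb{D}}$. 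The two-point reduction then gives univalence of every $f_\varepsilon$, which is indeed all that graphness requires via Proposition \ref{prop:graphness_vs_univalence}.

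The genuine gap is the close-to-convexity of $f_\varepsilon(\mathbb{D})$, which is part of the stated conclusion and is precisely what the paper uses to conclude ``graph over a \emph{close-to-convex} domain'' in Theorem \ref{thm:main_krust1} and its corollaries. You flag this step as the obstacle, but the route you sketch would not work as written: there is no ``close-to-convex form of the shear correspondence'' attached to the single analytic function $h-\overline{\varepsilon}g$. The shear theorem transfers convexity in one fixed direction only, so knowing that $h-\overline{\varepsilon}g$ is univalent, or even close-to-convex, says nothing directly about the shape of $f_\varepsilon(\mathbb{D})$. The machinery that actually closes this step is the Clunie--Sheil-Small close-to-convexity criterion for harmonic maps: one must produce a \emph{single} convex analytic $\varphi$ with $\Re\bigl((h'+\nu g')/\varphi'\bigr)>0$ for all $|\nu|\le 1$, i.e.\ close-to-convexity of the whole family $h+\nu g$ relative to a common convex reference, and only then does the harmonic map $h+\varepsilon\overline{g}$ inherit close-to-convexity. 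Extracting that common $\varphi$ from the harmonic convexity of $f$ is the substantive content of \cite[Theorem 5.7]{CS84}; your connectedness argument yields univalence of the functions $h+\nu g$ but not their close-to-convexity with respect to a common majorant. As it stands, the proposal proves a strictly weaker statement (univalence and orientation-preservation of $f_\varepsilon$) than Theorem \ref{thm:kalaj}.
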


\begin{remark}\label{rmk:generality_domain}
On the above Theorem \ref{thm:kalaj}, the domain of definition of $f$ is not needed to be $\mathbb{D}$. Suppose $D$ is a simply connected proper subdomain of $\mathbb{C}$ and $f=h+\overline{g}\colon D\to \mathbb{C}$ is a convex orientation-preserving harmonic mapping. Taking a conformal mapping $\Phi \colon \mathbb{D} \to D$, we define $\mathcal{F}:=f\circ \Phi=h\circ \Phi +\overline{g\circ \Phi}$. Then Theorem \ref{thm:kalaj} can be applied to $\mathcal{F}$, and thus $\mathcal{F}_{\varepsilon}=h\circ \Phi + \varepsilon \overline{g\circ \Phi}$ is close-to-convex ($\varepsilon\in \overline{\mathbb{D}}$). Therefore, $f_{\varepsilon}:=\mathcal{F}_{\varepsilon}\circ \Phi^{-1}=h+\varepsilon\overline{g}$ is close-to-convex.
\end{remark}

\begin{remark}\label{rmk:sense_preserving}
Contrary to Remark \ref{rmk:generality_domain}, the assumption that $f$ is orientation-preserving is important. In fact, if $f=h+\overline{g}$ is a convex orientation-reversing harmonic mapping, then the corresponding conclusion is that $\widetilde{f_{\varepsilon}}:=\varepsilon h +\overline{g}$ is close-to-convex for all $\varepsilon \in \overline{\mathbb{D}}$.
\end{remark}

Let us consider a deformation family $S_{\theta,\lambda,c}=X_{\theta,\lambda,c}(D)$ with the Weierstrass data $(F,G)$ in \eqref{eq:extended_family}. The following theorem holds.
\begin{theorem}\label{thm:main_krust1}
Suppose that $G$ is not constant and $|G|<1$. If there exists $(\theta_0,\lambda_0,c_0)\in \mathcal{P}$ such that $|c_0 \lambda_0^2|\leq 1/\| G \|_{\infty}^2$ and $S_{\theta_0,\lambda_0,c_0}$ is a graph over a convex domain, then $S_{\theta,\lambda,c}$ is a graph over a close-to-convex domain for any $(\theta,\lambda,c)\in\mathcal{P}$ with $|c\lambda^2|\leq |c_0 \lambda_0^2|$. Here, $\| G \|_{\infty}:=\sup_{w\in D}|G(w)|$.
\end{theorem}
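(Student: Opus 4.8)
The plan is to reduce the statement entirely to the univalence of the associated planar harmonic mappings via Proposition \ref{prop:graphness_vs_univalence}, and then to recognize the whole three-parameter family as a single one-parameter deformation of the form $h+\varepsilon\overline{g}$, to which Kalaj's theorem (Theorem \ref{thm:kalaj}) applies directly. The key structural observation is that every member $f_{\theta,\lambda,c}=h+c\lambda^2e^{-2i\theta}\overline{g}$ shares the same holomorphic part $h$ and the same anti-holomorphic direction $\overline{g}$; only the complex scalar multiplying $\overline{g}$ varies. Thus the role of the distinguished member $(\theta_0,\lambda_0,c_0)$ is to play the convex seed mapping in Kalaj's theorem, and all other members are recovered by choosing an appropriate $\varepsilon\in\overline{\mathbb{D}}$.

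First I would verify the orientation-preserving hypothesis. Since $|G|<1$ and $G$ is non-constant, $G$ is a bounded holomorphic function, so by the maximum modulus principle $|G(w)|<\|G\|_{\infty}$ at every interior point of $D$. Using Lemma \ref{lem:dilatation}, the analytic dilatation of $f_{\theta_0,\lambda_0,c_0}$ satisfies $|\omega_{\theta_0,\lambda_0,c_0}|=|c_0\lambda_0^2|\,|G|^2<|c_0\lambda_0^2|\,\|G\|_{\infty}^2\le 1$, where the last inequality is precisely the hypothesis $|c_0\lambda_0^2|\le 1/\|G\|_{\infty}^2$. Hence $Jf_{\theta_0,\lambda_0,c_0}=|h'|^2(1-|\omega|^2)>0$, so $f_{\theta_0,\lambda_0,c_0}$ is orientation-preserving. (The case $c_0=0$ is trivial: the hypothesis $|c\lambda^2|\le|c_0\lambda_0^2|=0$ forces $c=0$, and then $f_{\theta,\lambda,c}=h$ is convex; so I may assume $c_0\neq 0$.) Because $S_{\theta_0,\lambda_0,c_0}$ is a graph over a convex domain, Proposition \ref{prop:graphness_vs_univalence} says $f_{\theta_0,\lambda_0,c_0}$ is univalent with convex image, i.e. a convex orientation-preserving harmonic mapping, which is exactly the hypothesis of Theorem \ref{thm:kalaj}.

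Next I would write $f_{\theta_0,\lambda_0,c_0}=h+\overline{\widetilde{g}}$ with $\widetilde{g}=c_0\lambda_0^2e^{2i\theta_0}g$ and invoke Theorem \ref{thm:kalaj}, in the form valid on a general simply connected proper subdomain $D$ (Remark \ref{rmk:generality_domain}), to conclude that $h+\varepsilon\overline{\widetilde{g}}$ is close-to-convex and orientation-preserving for every $\varepsilon\in\overline{\mathbb{D}}$. The final step is purely bookkeeping: given $(\theta,\lambda,c)$ with $|c\lambda^2|\le|c_0\lambda_0^2|$, set $\varepsilon=\frac{c\lambda^2}{c_0\lambda_0^2}e^{-2i(\theta-\theta_0)}$, so that $h+\varepsilon\overline{\widetilde{g}}=h+c\lambda^2e^{-2i\theta}\overline{g}=f_{\theta,\lambda,c}$ while $|\varepsilon|=|c\lambda^2|/|c_0\lambda_0^2|\le 1$. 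Thus $f_{\theta,\lambda,c}$ is close-to-convex, in particular univalent, and Proposition \ref{prop:graphness_vs_univalence} then gives that $S_{\theta,\lambda,c}$ is a graph over a close-to-convex domain, as claimed.

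The only genuinely delicate point is verifying the orientation-preserving hypothesis of Kalaj's theorem, which Remark \ref{rmk:sense_preserving} flags as essential: this is where the sharp bound $|c_0\lambda_0^2|\le 1/\|G\|_{\infty}^2$ and the strict maximum-modulus inequality $|G|<\|G\|_{\infty}$ combine, and it is precisely why the normalization $|G|<1$ (rather than $|G|>1$, cf. Remark \ref{rmk:necess_cond}) is imposed. Everything else is the algebraic identification of the correct $\varepsilon$ and the routine transfer of Kalaj's theorem from $\mathbb{D}$ to a general domain through a conformal change of coordinates.
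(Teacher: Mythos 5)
Your proposal is correct and follows essentially the same route as the paper's own proof: the same decomposition $h_0=h$, $g_0=c_0\lambda_0^2e^{2i\theta_0}g$, the same use of Lemma \ref{lem:dilatation} and the maximum principle to verify the orientation-preserving hypothesis of Theorem \ref{thm:kalaj}, the same choice $\varepsilon=\frac{c\lambda^2}{c_0\lambda_0^2}e^{-2i(\theta-\theta_0)}\in\overline{\mathbb{D}}$, and the same separate (trivial) treatment of the case $c_0=0$. No gaps; the only difference is that you spell out the reduction via Proposition \ref{prop:graphness_vs_univalence} and Remark \ref{rmk:generality_domain} more explicitly than the paper does.
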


\begin{proof}
By the assumption, $f_0:=f_{\theta_0,\lambda_0,c_0}=h+c_0 \lambda_0^2 e^{-2i\theta_0} \overline{g}=h_0+\overline{g_0},\ (h_0:=h,\ g_0:=c_0 \lambda_0^2 e^{2i\theta_0}g)$ is convex. Since $|c_0 \lambda_0^2| \leq1/\| G \|_{\infty}^2$, Lemma \ref{lem:dilatation} and the maximum principle imply that $|\omega_{f_0}|=|c_0 \lambda_0^2 G^2|<1$. Thus $f_0$ is orientation-preserving. Therefore, $h_0+\varepsilon \overline{g_0}$ is close-to-convex for $\varepsilon \in \overline{\mathbb{D}}$ by Theorem \ref{thm:kalaj}.

On the other hand, if $(\theta,\lambda,c)\in\mathcal{P}$ satisfies $|c\lambda^2|\leq |c_0 \lambda_0^2|$ and $c_0\neq 0$, then
\begin{equation*}
	f_{\theta,\lambda,c}=h+c\lambda^2 e^{-2i\theta}\overline{g}=h_0+\varepsilon \overline{g_0},\ \ \ \varepsilon:=\frac{c\lambda^2}{c_0 \lambda_0^2} e^{-2i(\theta-\theta_0)} \in \overline{\mathbb{D}}.
\end{equation*}
Thus, $f_{\theta,\lambda,c}$ is close-to-convex. When $c_0= 0$, $|c\lambda^2|\leq |c_0 \lambda_0^2|$ implies $c=0$ and hence $f_{\theta,\lambda,c}=h=f_{\theta_0,\lambda_0,c_0}$ is also close-to-convex. This is the desired conclusion.
\end{proof}

\begin{figure}[htbp]
       \begin{center}
           \includegraphics[scale=0.75]{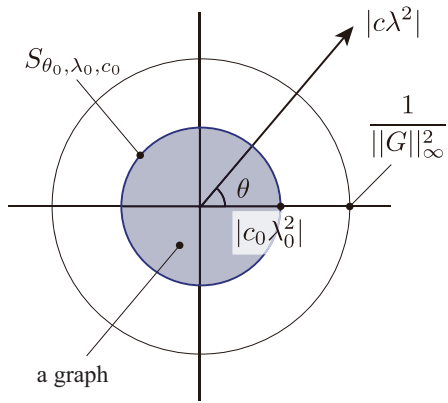} 
       \end{center}
       \caption{The highlighted part indicates the region on which the ZMC surfaces $S_{\theta,\lambda,c}$ are graphs in the case where $S_{\theta_0,\lambda_0,c_0}$ is a graph over a convex domain. The parameter space is described in polar coordinates with radius $|c\lambda^2|$ and angle $\theta$. }  % \label{hidari}
\end{figure} %\vspace{-2ex}

Putting $(\theta_0,\lambda_0,c_0)=(0,1,\pm 1)$ in Theorem \ref{thm:main_krust1}, we obtain the following.
\begin{corollary}\label{cor:main_krust1}
Suppose that $G$ is not constant and $|G|<1$. 
If $S_{\min}:=S_{0,1,1}$ $($or $S_{\max}:=S_{0,1,-1})$ is a graph over a convex domain, then
\vspace*{0.5em}
\begin{enumerate}\setlength{\parskip}{0.5em}
	\item[$(\mathrm{i})$] for any $(\theta,\lambda,c)\in \mathcal{P}$ with $|c\lambda^2|\leq 1$, $S_{\theta,\lambda,c}$ is a graph over a close-to-convex domain.
\end{enumerate}
\vspace*{0.5em}
In particular, the following hold.
\vspace*{0.5em}
\begin{enumerate}\setlength{\parskip}{0.5em}
	\item[$(\mathrm{ii})$] Classical Krust's theorem: For any $\theta\in \RZ$, $S_{\theta,1,\pm 1}$ is a graph over a close-to-convex domain.
	\item[$(\mathrm{iii})$] Krust-type theorem for L\'opez-Ros deformation: For any $\lambda\in (0,1]$, $S_{0,\lambda,\pm 1}$ is a graph over a close-to-convex domain.
\end{enumerate}
\vspace*{0.5em}
\end{corollary}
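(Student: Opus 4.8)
The plan is to obtain all three assertions as direct specializations of Theorem \ref{thm:main_krust1}, choosing the base point $(\theta_0,\lambda_0,c_0)$ appropriately.

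First I would prove (i) by invoking Theorem \ref{thm:main_krust1} with $(\theta_0,\lambda_0,c_0)=(0,1,1)$ in the case that $S_{\min}=S_{0,1,1}$ is a graph over a convex domain, and with $(\theta_0,\lambda_0,c_0)=(0,1,-1)$ in the case of $S_{\max}=S_{0,1,-1}$. In either case $|c_0\lambda_0^2|=1$, and the standing hypotheses that $G$ is nonconstant and $|G|<1$ are inherited. The only hypothesis of Theorem \ref{thm:main_krust1} needing verification is the inequality $|c_0\lambda_0^2|\leq 1/\|G\|_\infty^2$; this is exactly where $|G|<1$ is used, since $|G(w)|<1$ for all $w\in D$ gives $\|G\|_\infty=\sup_{w\in D}|G(w)|\leq 1$ and hence $1/\|G\|_\infty^2\geq 1=|c_0\lambda_0^2|$. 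With all hypotheses met, the conclusion of Theorem \ref{thm:main_krust1} is precisely that $S_{\theta,\lambda,c}$ is a graph over a close-to-convex domain for every $(\theta,\lambda,c)\in\mathcal{P}$ with $|c\lambda^2|\leq|c_0\lambda_0^2|=1$, which is statement (i).

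Statements (ii) and (iii) would then follow simply by checking that the indicated parameters fall within the region already handled by (i). For (ii), each member $S_{\theta,1,\pm 1}$ of the associated family satisfies $|c\lambda^2|=1\leq 1$, so (i) applies and recovers the classical Krust theorem together with its Lorentzian analogue. For (iii), each L\'opez-Ros deformation $S_{0,\lambda,\pm 1}$ with $\lambda\in(0,1]$ satisfies $|c\lambda^2|=\lambda^2\leq 1$, so (i) again applies.

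Because the proof is a pure specialization, I do not anticipate any real difficulty. The single delicate point is the endpoint case $|c_0\lambda_0^2|=1$: one must confirm that $|G|<1$ forces $\|G\|_\infty\leq 1$, so that the base surfaces $S_{0,1,\pm 1}$ still lie at (or within) the orientation-preserving threshold required by Theorem \ref{thm:main_krust1} rather than strictly beyond it.
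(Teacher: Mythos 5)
Your proposal is correct and matches the paper's own proof, which consists precisely of putting $(\theta_0,\lambda_0,c_0)=(0,1,\pm 1)$ into Theorem \ref{thm:main_krust1}. Your verification that $|G|<1$ gives $\|G\|_\infty\leq 1$ and hence $|c_0\lambda_0^2|=1\leq 1/\|G\|_\infty^2$ is exactly the point that makes the specialization legitimate.
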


\begin{remark}\label{rmk:main_cor}
It should be remarked that the above assertion (i) is not just a combination of the assertions (ii) and (iii). In fact, under the assumptions in Corollary \ref{cor:main_krust1},
\vspace*{0.5em}
\begin{itemize}\setlength{\parskip}{0.5em}
	\item $S_{\theta,1,1}$ is not necessarily a graph over a convex domain, even if $S_{\min}=S_{0,1,1}$ is a graph over a convex domain. However (i) shows that its L\'opez-Ros deformation $S_{\theta,\lambda,1}\ (\lambda\leq 1)$ is also a graph.
	\item $S_{0,\lambda,1}\ (\lambda\leq 1)$ is not necessarily a graph over a convex domain, even if $S_{\min}=S_{0,1,1}$ is a graph over a convex domain. However (i) shows that its associated family consists of graphs.
\end{itemize}
\vspace*{0.5em}
\end{remark}

%============================================================SECTION====
\section{Krust Type Theorem Part 2} \label{sec:Krust_2}
In this section, we prove another kind of Krust-type theorem, which does not assume the convexity of minimal surfaces in $\mathbb{E}^3$. This result indicates that it is powerful to consider deformations of minimal (resp. maximal) surfaces in the Euclidean space (resp. Minkowski space) across different spaces $\Rc$. In other words, the parameter $c$ in the deformation family $\{S_{\theta,\lambda,c}\}$ plays a significant role in the discussions.

%===============================SUBSECTION====
\subsection{Another kind of Krust type theorem}\label{subsec:new_krust}
First, we give a simple observation. By the equation \eqref{eq:planar_rep_X} and the definition of $f_{\theta,\lambda,c}$, we immediately have
\begin{equation*}
	X_{\theta, \lambda, 0}=\left( \begin{array}{c} \displaystyle \frac{e^{i\theta}}{\lambda} h \\[10pt] t\cos\theta - t^{\ast}\sin\theta \end{array} \right),\ \ \ \ f_{\theta,\lambda,0}=h.
\end{equation*}
\noindent
Thus, by using Proposition \ref{prop:graphness_vs_univalence}, we have the following.

\begin{proposition}
The following are equivalent:
\vspace*{0.5em}
\begin{enumerate}\setlength{\parskip}{0.5em}
	\item[$(\mathrm{i})$] $S_{\theta_0,\lambda_0,0}$ is a graph $($over a convex domain$)$ for some $(\theta_0,\lambda_0)$.
	\item[$(\mathrm{ii})$] $S_{\theta,\lambda,0}$ is a graph $($over a convex domain$)$ for any $(\theta,\lambda)$.
	\item[$(\mathrm{iii})$] $h$ is univalent $($and convex$)$.
\end{enumerate}
\vspace*{0.5em}
\end{proposition}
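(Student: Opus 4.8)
The plan is to reduce everything to the single observation, recorded just above the statement, that $f_{\theta,\lambda,0}=h$ for every choice of $(\theta,\lambda)$. At $c=0$ the deformation decouples: the analytic dilatation vanishes and the horizontal part of the surface feels the parameters $\theta,\lambda$ only through a harmless prefactor. Consequently all three conditions should collapse to a single statement about $h$, and the proof becomes a matter of bookkeeping with Proposition~\ref{prop:graphness_vs_univalence}.

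First I would dispose of the graphness (non-parenthetical) equivalences. By Proposition~\ref{prop:graphness_vs_univalence}, $S_{\theta,\lambda,0}$ is a graph if and only if $f_{\theta,\lambda,0}$ is univalent, and by the displayed formula preceding the statement this map is exactly $h$, independently of $(\theta,\lambda)$. Hence ``$S_{\theta_0,\lambda_0,0}$ is a graph for some $(\theta_0,\lambda_0)$'', ``$S_{\theta,\lambda,0}$ is a graph for all $(\theta,\lambda)$'', and ``$h$ is univalent'' are literally the same condition, which yields $(\mathrm{i})\Leftrightarrow(\mathrm{ii})\Leftrightarrow(\mathrm{iii})$ at the level of graphness.

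Next I would treat the convexity (parenthetical) parts. When $S_{\theta,\lambda,0}$ is a graph, its base domain $\Omega_{\theta,\lambda,0}\subset xy$-plane is precisely the image of the planar coordinate of $X_{\theta,\lambda,0}$, which by \eqref{eq:planar_rep_X} equals $\tfrac{e^{i\theta}}{\lambda}h$; thus $\Omega_{\theta,\lambda,0}=\tfrac{e^{i\theta}}{\lambda}\,h(D)$. The map $z\mapsto \tfrac{e^{i\theta}}{\lambda}z$ is a rotation composed with a homothety, hence a similarity of $\mathbb{C}$, and such maps carry convex sets to convex sets and back. Therefore $\Omega_{\theta,\lambda,0}$ is convex if and only if $h(D)$ is convex, again with no dependence on $(\theta,\lambda)$. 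Combining this with the previous paragraph upgrades each of $(\mathrm{i}),(\mathrm{ii}),(\mathrm{iii})$ to its parenthetical form simultaneously, completing the equivalence.

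There is essentially no serious obstacle here: the entire content is that at $c=0$ the surface's horizontal projection is $h$ up to the similarity $e^{i\theta}/\lambda$. The only two points requiring minor care are to invoke Proposition~\ref{prop:graphness_vs_univalence} in order to convert ``graph'' into ``univalent'', and to observe that a similarity preserves convexity, so that the word ``convex'' transfers freely between $\Omega_{\theta,\lambda,0}$ and $h(D)$.
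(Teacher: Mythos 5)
Your proof is correct and is essentially the argument the paper intends: the paper derives the proposition directly from the displayed identity $f_{\theta,\lambda,0}=h$ together with Proposition~\ref{prop:graphness_vs_univalence}, exactly as you do. Your extra remark that the base domain is the similarity image $\tfrac{e^{i\theta}}{\lambda}h(D)$, so convexity transfers back and forth, just makes explicit the parenthetical part that the paper leaves implicit.
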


The next result that we will use is obtained by Partyka-Sakan in \cite[Theorem 3.1]{PS} (see also \cite[Theorem 5.17]{CS84}).

\begin{theorem}[Partyka-Sakan, \cite{PS}]\label{thm:PS}
Let $h, g\colon \mathbb{D}\to \mathbb{C}$ be holomorphic functions such that $|h'|>|g'|$ and $\omega=g'/h'$ is not constant. If there exists $\varepsilon_0 \in \mathbb{C}$ such that $|\varepsilon_0|\| \omega \|_{\infty} \leq 1$ and $h+\varepsilon_0 g$ is convex $($i.e. it is a conformal mapping and its image is convex$)$, then $f_{\varepsilon}:=h+\varepsilon \overline{g}$ is a close-to-convex harmonic mapping for any $\varepsilon \in \mathbb{C}$ with $|\varepsilon|\| \omega \|_{\infty}\leq 1$.
\end{theorem}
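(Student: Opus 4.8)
The plan is to deduce the close-to-convexity of each $f_{\varepsilon}=h+\varepsilon\overline{g}$ from the \emph{analytic} convexity of $h+\varepsilon_0 g$ by routing through the Clunie--Sheil-Small shear construction, with the dilatation bound doing the essential work. First I would normalize: since $\omega=g'/h'$ is non-constant and bounded, replacing $g$ by $g/\|\omega\|_{\infty}$ and rescaling $\varepsilon_0,\varepsilon$ accordingly lets me assume $\|\omega\|_{\infty}=1$, so the hypotheses read $\varepsilon_0,\varepsilon\in\overline{\mathbb{D}}$ and, by the maximum principle, $|\omega|<1$ throughout $\mathbb{D}$. Writing $f_{\varepsilon}=H+\overline{G}$ with $H=h$ and $G=\overline{\varepsilon}\,g$, its analytic dilatation is $G'/H'=\overline{\varepsilon}\,\omega$, of modulus $<|\varepsilon|\,\|\omega\|_{\infty}\le 1$; hence $f_{\varepsilon}$ is locally univalent and orientation-preserving. (One could moreover reduce to $\varepsilon_0=1$ by absorbing $\varepsilon_0$ into $g$, which merely rescales $\omega$; I keep $\varepsilon_0$ general.)

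Next I would invoke the shear theorem in its directional form (Clunie--Sheil-Small, \cite{CS84}): a locally univalent orientation-preserving harmonic map $H+\overline{G}$ is univalent with image convex in the direction $\gamma$---and hence close-to-convex---if and only if the analytic function $H-e^{2i\gamma}G$ is conformal with image convex in the direction $\gamma$. For $f_{\varepsilon}$ this recasts the goal as: exhibit an angle $\gamma$ for which $h-e^{2i\gamma}\overline{\varepsilon}\,g$ is conformal and convex in the direction $\gamma$. I would then choose $\gamma$ so that the coefficient aligns with $\varepsilon_0$, i.e. $-e^{2i\gamma}\overline{\varepsilon}=|\varepsilon|\,e^{i\arg\varepsilon_0}=:s$, which is solvable since both sides have modulus $|\varepsilon|$. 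The problem thus reduces to proving that $h+s\,g$, whose coefficient $s$ sits on the same ray as $\varepsilon_0$ with $|s|=|\varepsilon|\le 1$, is conformal and convex in that direction $\gamma$.

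The heart of the matter---and the step I expect to be the main obstacle---is this transfer of convexity from the single value $s=\varepsilon_0$, where $h+\varepsilon_0 g$ is convex in \emph{every} direction, to a neighbouring value $s$ on the ray whose modulus may \emph{exceed} $|\varepsilon_0|$. This is precisely where the bound $|\varepsilon|\,\|\omega\|_{\infty}\le 1$ is indispensable. I would carry it out via the analytic criterion for convexity in a direction (the Royster--Ziegler / Clunie--Sheil-Small criterion, \cite{CS84}): one must produce unimodular $\zeta_1,\zeta_2$ and a real $\mu$ with $\Re\{-i e^{i\mu}(1-\zeta_1 w)(1-\zeta_2 w)(h+s\,g)'\}\ge 0$ on $\mathbb{D}$. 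Writing $(h+s\,g)'=h'(1+s\,\omega)$ and comparing with the analogous data furnished by the convexity of $h+\varepsilon_0 g$, the two integrands differ only through the factor $(1+s\,\omega)/(1+\varepsilon_0\omega)$, which is controlled by the Schwarz--Pick bound $|\omega|<1$; the condition $|s|\le 1/\|\omega\|_{\infty}$ is exactly what keeps $1+s\,\omega$ from vanishing (so $h+s\,g$ stays conformal) and preserves the required sign. The delicate point is that this sign must survive an \emph{increase} of $|s|$ beyond $|\varepsilon_0|$, and verifying it is the genuine technical core of the argument.

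Once this directional convexity is secured, the shear theorem returns that $f_{\varepsilon}$ is univalent with image convex in the direction $\gamma$, hence a close-to-convex harmonic mapping, which is the assertion. The endpoint and degenerate values ($s=0$, corresponding to $\varepsilon=0$, as well as $\varepsilon_0=0$, where $h$ itself is convex and $\gamma$ may be taken arbitrarily) are subsumed in the same criterion, the base point of the transfer simply being $s=\varepsilon_0$, whether or not it is zero. Undoing the normalization then yields the statement in its original form, for all $\varepsilon$ with $|\varepsilon|\,\|\omega\|_{\infty}\le 1$.
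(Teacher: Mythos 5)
First, a remark on the comparison you were asked to survive: the paper does not prove this statement at all. It is imported as a black box from Partyka--Sakan \cite{PS} (Theorem 3.1 there), so your argument has to stand entirely on its own. It does not, because the step you yourself identify as ``the genuine technical core'' --- that $h+sg$, with $s=|\varepsilon|e^{i\arg\varepsilon_0}$, is conformal and convex in the direction $\gamma$ determined by $e^{2i\gamma}=-s/\overline{\varepsilon}$ --- is never proved. The evidence you offer for it is not evidence: nonvanishing of $1+s\omega$ yields local univalence of $h+sg$ and nothing more, and an inequality of the Royster--Ziegler type $\Re\{-ie^{i\mu}(1-\zeta_1w)(1-\zeta_2w)\varphi'\}\ge 0$ is in no way preserved when $\varphi'$ is multiplied by a zero-free holomorphic factor such as $(1+s\omega)/(1+\varepsilon_0\omega)$, even one with positive real part, since such a factor can rotate the argument of the integrand arbitrarily. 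So the reduction via the shear theorem is set up correctly, but the heart of the proof is asserted rather than established.

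There is also a structural defect in the reduction itself. Your $s$ depends only on $|\varepsilon|$ and $\arg\varepsilon_0$, while $2\gamma=\pi+\arg\varepsilon_0+\arg\varepsilon$ sweeps through all directions as $\arg\varepsilon$ varies over the admissible $\varepsilon$ of a fixed modulus; hence, to cover every such $\varepsilon$, your scheme needs $h+sg$ to be convex in \emph{every} direction, i.e.\ genuinely convex, for each $|s|\le 1/\|\omega\|_{\infty}$ on the ray of $\varepsilon_0$. That is much stronger than the hypothesis and false in general: take $h+g=w/(1-w)$ and $\omega(w)=w$ (so $\varepsilon_0=1$, $\|\omega\|_{\infty}=1$); then $h'=1/\big((1-w)^2(1+w)\big)$ and a direct computation of $\Re\left(1+wh''/h'\right)$ along $w=iy$ gives $(1-2y^2)/(1+y^2)<0$ for $y>1/\sqrt{2}$, so $h$ is not convex, and by continuity neither is $h+sg$ for small $|s|>0$. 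To salvage the plan you would have to let $s$ depend on $\arg\varepsilon$ as well and prove a truly direction-dependent convexity statement, for which no mechanism is supplied. Finally, you locate the difficulty at pushing $|s|$ \emph{above} $|\varepsilon_0|$, but that is the easy half: for $|\varepsilon|\ge|\varepsilon_0|$ one writes $f_{\varepsilon}=h+\overline{G}$ with $G=\overline{\varepsilon}g$ and applies the classical Clunie--Sheil-Small result cited alongside the statement (\cite[Theorem 5.17]{CS84}) to the convex function $h+(\varepsilon_0/\overline{\varepsilon})G$, since $|\varepsilon_0/\overline{\varepsilon}|\le 1$. The genuinely new range in Partyka--Sakan is $|\varepsilon|<|\varepsilon_0|$, and your sketch contains no argument for it.
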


\begin{remark}
If $h$ itself is a convex function, the assumption of Theorem \ref{thm:PS} is satisfied for $\varepsilon_0=0$.
\end{remark}

\begin{remark}
Theorem \ref{thm:PS} is still valid if we use a general simply connected proper subdomain $D\subset \mathbb{C}$ as the domain of $h$ and $g$ instead of $\mathbb{D}$, for the same reason as for Theorem \ref{thm:kalaj} (see Remark \ref{rmk:generality_domain}).
\end{remark}

\begin{theorem}\label{thm:main_krust2}
Suppose that $G$ is not constant and $|G|<1$. If the minimal surface $S_{\theta_0,\lambda_0,0}$ in the isotropic $3$-space $\mathbb{I}^3$ is a graph over a convex domain for some $(\theta_0,\lambda_0)$, then $S_{\theta,\lambda,c}$ is a graph over a close-to-convex domain for any $(\theta,\lambda,c)\in\mathcal{P}$ with $|c\lambda^2|\leq 1/\| G \|_{\infty}^2$. In particular, the minimal and the maximal surfaces $S_{\theta, 1, \pm 1}$ are graphs.
\end{theorem}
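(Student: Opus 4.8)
The plan is to reduce the statement to a direct application of the Partyka--Sakan theorem (Theorem \ref{thm:PS}) after translating the geometric hypothesis into an analytic one about the holomorphic functions $h$ and $g$ attached to the Weierstrass data. First I would record that, by Proposition \ref{prop:graphness_vs_univalence}, it suffices to prove that the planar harmonic mapping $f_{\theta,\lambda,c}=h+c\lambda^2 e^{-2i\theta}\overline{g}$ is univalent --- indeed close-to-convex --- for every $(\theta,\lambda,c)\in\mathcal{P}$ with $|c\lambda^2|\le 1/\|G\|_\infty^2$. Writing $\varepsilon:=c\lambda^2 e^{-2i\theta}$, this mapping has the shape $h+\varepsilon\overline{g}$ required by Theorem \ref{thm:PS}. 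On the hypothesis side, the assumption that $S_{\theta_0,\lambda_0,0}$ is a graph over a convex domain is, by the equivalence Proposition preceding Theorem \ref{thm:PS}, exactly the statement that $h$ is univalent and convex.

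Next I would verify the hypotheses of Theorem \ref{thm:PS} for the pair $(h,g)$. Since $h'=F$ and $g'=-G^2F$, the relevant dilatation is $\omega=g'/h'=-G^2$, so that $\|\omega\|_\infty=\|G\|_\infty^2$. The assumption $|G|<1$ gives $|g'|=|G|^2|h'|<|h'|$, and since $G$ is non-constant so is $\omega=-G^2$; thus the standing hypotheses $|h'|>|g'|$ and $\omega$ non-constant hold. Because $h$ is itself convex, the convexity hypothesis of Theorem \ref{thm:PS} is met with the choice $\varepsilon_0=0$ (as in the remark following that theorem). Crucially, the admissible range $|\varepsilon|\,\|\omega\|_\infty\le 1$ in Theorem \ref{thm:PS} reads $|c\lambda^2|\,\|G\|_\infty^2\le 1$, which is precisely the bound $|c\lambda^2|\le 1/\|G\|_\infty^2$ in the statement. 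Here I would invoke the extension of Theorem \ref{thm:PS} to an arbitrary simply connected proper subdomain $D\subset\mathbb{C}$, valid for the same reason as in Remark \ref{rmk:generality_domain}.

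With the hypotheses in place, Theorem \ref{thm:PS} immediately yields that $f_{\theta,\lambda,c}=h+\varepsilon\overline{g}$ is a close-to-convex harmonic mapping for every $\varepsilon$ with $|\varepsilon|\,\|\omega\|_\infty\le 1$, and in particular it is univalent. Proposition \ref{prop:graphness_vs_univalence} then shows that $S_{\theta,\lambda,c}$ is a graph over the close-to-convex domain $f_{\theta,\lambda,c}(D)$, giving the main assertion. For the final ``in particular'', I would specialize to $\lambda=1$ and $c=\pm1$, for which $|c\lambda^2|=1$; since $|G|<1$ forces $\|G\|_\infty=\sup_{w\in D}|G|\le 1$, the bound $|c\lambda^2|\le 1/\|G\|_\infty^2$ holds, so $S_{\theta,1,\pm1}$ --- the minimal surface in $\mathbb{E}^3$ and the maximal surface in $\mathbb{L}^3$ --- are graphs.

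The argument is short because the analytic heavy lifting is done by Theorem \ref{thm:PS}; the only genuine work is the bookkeeping that identifies the parameter $\varepsilon=c\lambda^2 e^{-2i\theta}$ and the dilatation $\omega=-G^2$, and matches the two inequalities. The one point demanding care --- the mild obstacle --- is the boundary case $\|G\|_\infty=1$ (a supremum that is not attained, since $|G|<1$ pointwise), where the bound for $S_{\theta,1,\pm1}$ becomes the equality $|c\lambda^2|=1=1/\|G\|_\infty^2$; this lies exactly on the \emph{closed} admissible range of Theorem \ref{thm:PS}, so no strictness is lost and the conclusion still applies.
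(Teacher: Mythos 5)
Your proposal is correct and follows essentially the same route as the paper: both reduce the claim via Proposition \ref{prop:graphness_vs_univalence} to the close-to-convexity of $f_{\theta,\lambda,c}=h+c\lambda^2 e^{-2i\theta}\overline{g}$, identify the dilatation $\omega=g'/h'=-G^2$ via Lemma \ref{lem:dilatation}, take $\varepsilon_0=0$ using the convexity of $h$, and apply the Partyka--Sakan theorem (Theorem \ref{thm:PS}) on the closed range $|\varepsilon|\,\|G\|_{\infty}^2\le 1$. Your extra remark on the boundary case $\|G\|_{\infty}=1$ is a harmless clarification already covered by the closed inequality in Theorem \ref{thm:PS}.
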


\begin{proof}
By Lemma \ref{lem:dilatation}, it holds that
\[
	-G^2 = \omega_{0,1,1}=\frac{g'}{h'}.
\]
Thus $|h'|>|g'|$ and $g'/h'$ is not constant since $|G|<1$ and $G$ is not constant. Further, $h$ is a convex conformal mapping by the assumption that $S_{\theta_0,\lambda_0,0}$ is a graph over a convex domain. Theorem \ref{thm:PS} implies that $h+\varepsilon\overline{g}$ is a close-to-convex harmonic mapping for any $\varepsilon\in\mathbb{C}$ with $|\varepsilon|\|g'/h'\|_{\infty}=|\varepsilon|\| G \|_{\infty}^2\leq 1$. Therefore, for any $(\theta,\lambda,c)\in\mathcal{P}$ with $|c\lambda^2|\| G \|_{\infty}^2\leq 1$, the planar harmonic mapping $f_{\theta,\lambda,c}=h+c\lambda^2e^{-2i\theta}\overline{g}$ is close-to-convex.
\end{proof}

\begin{figure}[htbp]
       \begin{center}
           \includegraphics[scale=0.75]{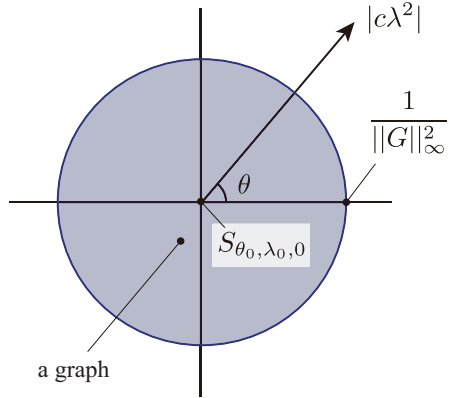} 
       \end{center}
       \caption{The region on which $S_{\theta,\lambda, c}$ is a graph in the case where the ZMC surface $S_{\theta_0,\lambda_0,0}$ is a graph over a convex domain.}  
\end{figure}

Recall that $h$ is completely determined by the holomorphic data $F$ of the Weierstrass data $(F,G)$. The following is just a restatement of Theorem \ref{thm:main_krust2} in terms of the Weierstrass data.
\begin{corollary}\label{cor:rephrasing_Krust2}
Assume that the Weierstrass data $(F,G)$ satisfies that 
\vspace*{0.5em}
\begin{enumerate}\setlength{\parskip}{0.5em}
	\item[$(\mathrm{i})$] $G$ is not constant and $|G|<1$,
	\item[$(\mathrm{ii})$] $\displaystyle h:=\int^w F dw$ is convex.
\end{enumerate}

\vspace*{0.5em}
\noindent Then, $S_{\theta,\lambda,c}$ is a graph over a close-to-convex domain for any $(\theta,\lambda,c)\in\mathcal{P}$ with $|c\lambda^2|\leq 1/\| G \|_{\infty}^2$.
\end{corollary}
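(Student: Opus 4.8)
The plan is to recognize this corollary as a direct translation of Theorem \ref{thm:main_krust2} into the language of the Weierstrass data, so that essentially no new work is required. The only point to verify is that hypothesis (ii), namely that $h=\int^w F\,dw$ is convex, is precisely the geometric hypothesis of Theorem \ref{thm:main_krust2} that the isotropic minimal surface $S_{\theta_0,\lambda_0,0}$ is a graph over a convex domain.

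To make this identification, I would first recall from \eqref{eq:planar_rep_X} and the definition of $f_{\theta,\lambda,c}$ that $f_{\theta,\lambda,0}=h$. By Proposition \ref{prop:graphness_vs_univalence}, $S_{\theta,\lambda,0}$ is a graph if and only if $h$ is univalent, and it is a graph over a convex domain exactly when $h$ is in addition convex; this is the equivalence recorded in the Proposition preceding Theorem \ref{thm:PS}. Thus hypothesis (ii) says exactly that $S_{\theta_0,\lambda_0,0}$ is a graph over a convex domain, while hypothesis (i) coincides with the standing assumption of Theorem \ref{thm:main_krust2}. Invoking Theorem \ref{thm:main_krust2} then yields the conclusion verbatim: $S_{\theta,\lambda,c}$ is a graph over a close-to-convex domain whenever $|c\lambda^2|\le 1/\|G\|_\infty^2$.

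Because the corollary is a mere restatement, I do not expect any genuine obstacle; the entire content is the dictionary between the geometric condition on $S_{\theta_0,\lambda_0,0}$ and the analytic condition on $h$, which is already supplied by $f_{\theta,\lambda,0}=h$ together with Proposition \ref{prop:graphness_vs_univalence}. If one prefers a self-contained argument, the same conclusion can be obtained directly by repeating the proof of Theorem \ref{thm:main_krust2}: since $h$ is convex, the Remark following Theorem \ref{thm:PS} lets us apply that theorem with $\varepsilon_0=0$; with $\omega=g'/h'=-G^2$ from Lemma \ref{lem:dilatation} one has $\|\omega\|_\infty=\|G\|_\infty^2$, so $h+\varepsilon\overline{g}$ is close-to-convex for every $\varepsilon$ with $|\varepsilon|\,\|G\|_\infty^2\le 1$. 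Taking $\varepsilon=c\lambda^2 e^{-2i\theta}$ shows $f_{\theta,\lambda,c}=h+c\lambda^2 e^{-2i\theta}\overline{g}$ is close-to-convex, and Proposition \ref{prop:graphness_vs_univalence} converts this into the asserted graphness. The only care needed is the orientation and dilatation bookkeeping already handled in Lemma \ref{lem:dilatation}.
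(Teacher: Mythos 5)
Your proposal is correct and follows exactly the paper's route: the paper offers no separate argument for this corollary, stating only that it is ``just a restatement of Theorem~\ref{thm:main_krust2} in terms of the Weierstrass data,'' which is precisely the dictionary $f_{\theta,\lambda,0}=h$ together with Proposition~\ref{prop:graphness_vs_univalence} that you spell out. Your optional self-contained version simply replays the proof of Theorem~\ref{thm:main_krust2} and is likewise fine.
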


%===============================SUBSECTION====
\subsection{Case of $D=\mathbb{D}$}
Here, we discuss a particular case where the domain $D$ is the unit disk $\mathbb{D}=\{|w|<1\}$. For an arbitrary $0<R\leq 1$, set $\mathbb{D}_R:=\{|w|<R\}$ and
\begin{equation*}
	S_{\theta,\lambda,c}^R :=X_{\theta,\lambda,c}(\mathbb{D}_R),\ \ \ \ G^R:=G|_{\mathbb{D}_R}.
\end{equation*}

The Krust-type theorem for L\'opez-Ros deformation with the parameter $\lambda$ of minimal surfaces in $\mathbb{E}^3$ proved in \cite{Dorff} is valid only for $0<\lambda \leq 1$. On the other hand, Theorem \ref{thm:main_krust2} for deformation family with parameters $(\theta,\lambda,c)\in \mathcal{P}$ is more broadly valid for $|c\lambda^2|\leq 1/\| G \|_\infty^2$, where $1/\| G \|_\infty^2\geq 1$, and this assumption is optimal as we will see in Section \ref{subsec:non_graph}. Even in the case $|c\lambda^2|> 1/\| G \|_\infty^2$, we can prove the following Krust-type theorem by restricting domains of graphs.

\begin{corollary}\label{cor:subgraph1}
Suppose $D=\mathbb{D}$, $G$ is not constant, and $|G|<1$. If the minimal surface $S_{\theta_0,\lambda_0,0}$ in $\mathbb{I}^3$ is a graph over a convex domain for some $(\theta_0,\lambda_0)$, then $S_{\theta,\lambda,c}^R$ is a graph over a close-to-convex domain for any $(\theta,\lambda,c)\in \mathcal{P}$ with $|c\lambda^2|\leq 1/\|G^R\|_{\infty}^2$.
\end{corollary}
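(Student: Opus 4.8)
The plan is to reduce Corollary \ref{cor:subgraph1} to Theorem \ref{thm:main_krust2} by a rescaling argument that replaces the full disk $\mathbb{D}$ with the subdisk $\mathbb{D}_R$ and correspondingly replaces the sup-norm $\|G\|_\infty$ over $\mathbb{D}$ by the smaller quantity $\|G^R\|_\infty$. The essential observation is that the hypothesis of Theorem \ref{thm:main_krust2}---that $S_{\theta_0,\lambda_0,0}$ is a graph over a convex domain, equivalently that $h$ is a convex conformal mapping---is a condition on $h$ alone, and hence it is inherited by every restriction $h|_{\mathbb{D}_R}$: a conformal map with convex image restricted to a smaller disk remains conformal with convex image, since the image $h(\mathbb{D}_R)$ is convex whenever $h(\mathbb{D})$ is (this uses that sublevel sets of a convex univalent function on concentric disks are themselves convex, a standard fact in the harmonic mapping literature cited as \cite{D,Pommerenke}).

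The key steps, in order, would be as follows. First I would apply the surface-to-harmonic-mapping dictionary of Proposition \ref{prop:graphness_vs_univalence}: restricting the domain to $\mathbb{D}_R$, the graphness of $S^R_{\theta,\lambda,c}$ is equivalent to the univalence of $f_{\theta,\lambda,c}|_{\mathbb{D}_R}=h|_{\mathbb{D}_R}+c\lambda^2e^{-2i\theta}\,\overline{g|_{\mathbb{D}_R}}$. Second, I would record that by Lemma \ref{lem:dilatation} the analytic dilatation of this restricted mapping is $-c\lambda^2e^{2i\theta}(G^R)^2$, whose relevant quantity is the restricted sup-norm $\|G^R\|_\infty=\sup_{w\in\mathbb{D}_R}|G(w)|$ rather than $\|G\|_\infty$. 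Third, I would verify that the hypotheses of Theorem \ref{thm:main_krust2}---or more directly Theorem \ref{thm:PS}, whose validity on a general simply connected proper subdomain is already noted in the remark following it---hold for the pair $(h|_{\mathbb{D}_R}, g|_{\mathbb{D}_R})$: namely $|h'|>|g'|$ and $g'/h'$ nonconstant persist under restriction since $|G|<1$ and $G$ is nonconstant on $\mathbb{D}_R$ (the latter by the identity theorem), and $h|_{\mathbb{D}_R}$ is convex. Fourth, I would conclude from Theorem \ref{thm:PS} that $h|_{\mathbb{D}_R}+\varepsilon\,\overline{g|_{\mathbb{D}_R}}$ is close-to-convex for all $\varepsilon$ with $|\varepsilon|\,\|G^R\|_\infty^2\le 1$, and then observe that $\varepsilon=c\lambda^2e^{-2i\theta}$ meets this bound precisely when $|c\lambda^2|\le 1/\|G^R\|_\infty^2$, giving the stated conclusion.

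The step I expect to be the main (though mild) obstacle is justifying that $h|_{\mathbb{D}_R}$ is genuinely convex, i.e.\ that the image of the smaller disk is a convex domain and not merely that $h$ is convex on the full disk. One must invoke the hereditary property of convexity under restriction to concentric subdisks, which for convex univalent functions follows from the classical fact that the level curves $h(\{|w|=\rho\})$ bound convex regions for every $0<\rho\le 1$ (a consequence of the analytic characterization $\Re\bigl(1+wh''(w)/h'(w)\bigr)>0$ being preserved on smaller disks). Once this is in place, every other ingredient is a direct transcription of the proof of Theorem \ref{thm:main_krust2} with $\mathbb{D}$ replaced by $\mathbb{D}_R$ and $\|G\|_\infty$ replaced by $\|G^R\|_\infty$, so the corollary follows with no new analytic input.
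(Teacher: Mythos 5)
Your proposal is correct and follows essentially the same route as the paper: both reduce the statement to Theorem \ref{thm:main_krust2} applied to the restricted family on $\mathbb{D}_R$, with the only substantive point being that $h|_{\mathbb{D}_R}$ remains a convex conformal mapping, which both you and the paper justify via the analytic characterization $\Re\bigl(1+w h''(w)/h'(w)\bigr)>0$ persisting on concentric subdisks. Your additional unwinding through Theorem \ref{thm:PS} and the restricted dilatation $\|G^R\|_\infty$ is just the proof of Theorem \ref{thm:main_krust2} made explicit, so no new content is needed.
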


\begin{proof}
By the assumption, $h$ is a convex conformal mapping. Thus, the restriction $h^R:=h|_{\mathbb{D}_R}$ is also a convex conformal mapping. This assertion immediately follows from the well-known fact that a locally univalent holomorphic function $\varphi\colon \mathbb{D}\to \mathbb{C}$ is convex if and only if $\Re \left( 1+ w \varphi''(w)/\varphi'(w)\right)>0$ holds for every $w\in \mathbb{D}$ (see, \cite[Section 3.6]{Pommerenke}). Therefore, we obtain the conclusion by applying Theorem \ref{thm:main_krust2} to the restricted deformation family $\{S_{\theta,\lambda,c}^R\}$.
\end{proof}

\begin{corollary}\label{cor:subgraph2}
If we suppose $G(0)=0$ in addition to the same assumptions as Corollary \ref{cor:subgraph1}, then the same conclusion as the corollary holds for $(\theta,\lambda,c)\in \mathcal{P}$ with $|c\lambda^2|\leq 1/R^2$.
\end{corollary}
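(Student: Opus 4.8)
The plan is to reduce the statement directly to Corollary \ref{cor:subgraph1}. The only difference between the two corollaries lies in the range of the parameter: here the bound $|c\lambda^2| \le 1/\|G^R\|_\infty^2$ is replaced by the more explicit bound $|c\lambda^2| \le 1/R^2$. Thus it suffices to verify that, under the extra hypothesis $G(0)=0$, one has $\|G^R\|_\infty \le R$, so that the condition $|c\lambda^2| \le 1/R^2$ implies $|c\lambda^2| \le 1/\|G^R\|_\infty^2$ and the previous corollary applies verbatim.

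To obtain this estimate, I would first observe that since $|G|<1$ on $D=\mathbb{D}$, the meromorphic function $G$ can have no poles and hence defines a holomorphic self-map $G\colon \mathbb{D}\to\mathbb{D}$. Combined with the normalization $G(0)=0$, Schwarz's lemma then yields the pointwise bound $|G(w)|\le |w|$ for every $w\in\mathbb{D}$. Restricting to the disk $\mathbb{D}_R$ gives $\|G^R\|_\infty = \sup_{|w|<R}|G(w)| \le R$, and therefore $1/R^2 \le 1/\|G^R\|_\infty^2$.

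With this inequality in hand, any triple $(\theta,\lambda,c)\in\mathcal{P}$ satisfying $|c\lambda^2|\le 1/R^2$ automatically satisfies $|c\lambda^2|\le 1/\|G^R\|_\infty^2$, so that $S_{\theta,\lambda,c}^R$ is a graph over a close-to-convex domain by Corollary \ref{cor:subgraph1}. Since every step is a routine invocation of Schwarz's lemma followed by the preceding corollary, I do not expect any genuine difficulty; the only point meriting a moment's care is the remark that the hypothesis $|G|<1$ forces $G$ to be holomorphic rather than merely meromorphic, which is precisely what makes Schwarz's lemma applicable.
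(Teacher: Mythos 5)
Your proposal is correct and follows exactly the paper's own argument: Schwarz's lemma applied to $G\colon\mathbb{D}\to\mathbb{D}$ with $G(0)=0$ gives $|G(w)|\le|w|$, hence $\|G^R\|_\infty\le R$, and Corollary \ref{cor:subgraph1} finishes the proof. The extra remark that $|G|<1$ forces $G$ to be holomorphic is a reasonable small addition but does not change the route.
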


\begin{proof}
The assumptions and the Schwarz lemma shows that $|G(w)|\leq |w|$. Thus we have $\|G^R\|_{\infty}\leq R$, and Corollary \ref{cor:subgraph1} implies the desired assertion.
\end{proof}

We again recall that the mapping $h$ is defined by only $F$. According to the analytic characterization of convex conformal mappings (see the proof of Corollary \ref{cor:subgraph1}), we can translate the assumption that $S_{\theta_0, \lambda_0, 0}$ is a graph over a convex domain into an analytic condition of the Weierstrass data as follows.%like the following.
\begin{corollary}\label{cor:rephrasing_Krust2_for_disk}
Suppose $D=\mathbb{D}$. If the Weierstrass data $(F,G)$ satisfies that 
\vspace*{0.5em}
\begin{enumerate}\setlength{\parskip}{0.5em}
	\item[$(\mathrm{i})$] $G$ is not constant and $|G|<1$,
	\item[$(\mathrm{ii})$] $F(w)\neq 0$ and $\displaystyle \Re \left( 1+ w \frac{F'(w)}{F(w)}\right)>0$ hold for every $w\in \mathbb{D}$,
\end{enumerate}
\vspace*{0.5em}
then $S_{\theta,\lambda,c}$ is a graph over a close-to-convex domain for any $(\theta,\lambda,c)\in\mathcal{P}$ with $|c\lambda^2|\leq 1/\| G \|_{\infty}^2$.
\end{corollary}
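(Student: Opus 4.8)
The plan is to recognize this corollary as a direct translation of Corollary \ref{cor:rephrasing_Krust2} (equivalently Theorem \ref{thm:main_krust2}) into a pointwise analytic condition on $F$, the only new ingredient being the classical analytic characterization of convex conformal mappings already recalled in the proof of Corollary \ref{cor:subgraph1}.

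First I would observe that, since $h=\int^w F\,dw$, one has $h'=F$ and hence $h''=F'$, so that $wh''(w)/h'(w)=wF'(w)/F(w)$ on $\mathbb{D}$. Thus hypothesis (ii) asserts precisely two things: that $F=h'$ never vanishes, so that $h$ is locally univalent, and that $\Re\bigl(1+wh''(w)/h'(w)\bigr)>0$ for every $w\in\mathbb{D}$.

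Next I would invoke the analytic characterization of convexity: a locally univalent holomorphic function $\varphi\colon\mathbb{D}\to\mathbb{C}$ is a convex conformal mapping if and only if $\Re\bigl(1+w\varphi''(w)/\varphi'(w)\bigr)>0$ for all $w\in\mathbb{D}$ (see \cite[Section 3.6]{Pommerenke}). Applied to $\varphi=h$, this shows that hypothesis (ii) is equivalent to $h$ being a convex conformal mapping on $\mathbb{D}$. Hypotheses (i) together with the convexity of $h$ are then exactly the assumptions (i) and (ii) of Corollary \ref{cor:rephrasing_Krust2}, so applying that corollary yields that $S_{\theta,\lambda,c}$ is a graph over a close-to-convex domain for every $(\theta,\lambda,c)\in\mathcal{P}$ with $|c\lambda^2|\le 1/\|G\|_\infty^2$, which is the desired conclusion.

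I expect essentially no substantive obstacle here: the entire content is the identity $h'=F$ combined with the standard convexity criterion. The only point requiring a word of care is that the criterion presupposes local univalence, which is guaranteed precisely by the nonvanishing of $F$ in hypothesis (ii); once this is noted, the statement reduces verbatim to the already-proved Corollary \ref{cor:rephrasing_Krust2}.
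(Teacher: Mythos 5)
Your proof is correct and follows exactly the route the paper intends: the paper presents this corollary as a restatement of Corollary \ref{cor:rephrasing_Krust2} obtained by translating the convexity of $h$ into the analytic criterion $\Re\bigl(1+w\varphi''(w)/\varphi'(w)\bigr)>0$ from \cite[Section 3.6]{Pommerenke}, using $h'=F$. Your added remark that the nonvanishing of $F$ supplies the local univalence required by that criterion is exactly the right point of care.
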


%===============================SUBSECTION====
\subsection{Sharpness of estimations.}\label{subsec:non_graph}
At last, we show a condition for $S_{\theta,\lambda,c}=X_{\theta,\lambda,c}(D)$ not to be a graph, and discuss the sharpness of the estimations for the realms in which $S_{\theta,\lambda,c}$ is a graph in the Krust-type theorems that we obtained. 

\begin{theorem}\label{thm:non_graph}
Suppose that $G$ is not a constant function and $|G|<1$.  For any $(\theta, \lambda, c)\in \mathcal{P}$, if $1/\| G \|_{\infty}^2 < |c\lambda^2|<1/\inf |G|^2$ $($or $1/\| G \|_{\infty}^2<|c\lambda^2|<+\infty$ if $\inf |G|^2=0)$, then $S_{\theta, \lambda, c }$ is not a graph. 
\end{theorem}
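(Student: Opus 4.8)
The plan is to argue directly by producing a single point of $D$ at which the associated planar harmonic mapping degenerates. By Proposition~\ref{prop:graphness_vs_univalence}, $S_{\theta,\lambda,c}$ is a graph if and only if $f_{\theta,\lambda,c}=h+c\lambda^2e^{-2i\theta}\overline{g}$ is univalent, so it suffices to show that $f_{\theta,\lambda,c}$ fails to be even locally univalent somewhere. The bridge from local univalence to the Jacobian is Lewy's theorem, recalled in Remark~\ref{rmk:necess_cond}.

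First I would compute the Jacobian. Using the analytic dilatation $\omega_{\theta,\lambda,c}=-c\lambda^2e^{2i\theta}G^2$ from Lemma~\ref{lem:dilatation} together with $f_w=h'$ and the identity $Jf=|f_w|^2(1-|\omega|^2)$, one obtains
\[
	Jf_{\theta,\lambda,c}=|h'|^2\bigl(1-|c\lambda^2|^2|G|^4\bigr)=|h'|^2\bigl(1-(|c\lambda^2|\,|G|^2)^2\bigr).
\]
The crucial feature is that this vanishes at every point where $|c\lambda^2|\,|G|^2=1$, independently of whether $h'=F$ happens to vanish there. Thus the entire argument reduces to exhibiting a point $w_0\in D$ with $|G(w_0)|^2=1/|c\lambda^2|$.

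To find $w_0$ I would use an intermediate-value argument. The hypotheses force $c\neq 0$, so $v:=1/\sqrt{|c\lambda^2|}$ is well defined and the inequalities $1/\|G\|_\infty^2<|c\lambda^2|<1/\inf|G|^2$ translate exactly into $\inf|G|<v<\|G\|_\infty$ (in the degenerate case $\inf|G|^2=0$ this reads $0<v<\|G\|_\infty$). Since $|G|<1$, the map $G$ is bounded, hence holomorphic, and non-constant on the connected domain $D$, so $|G|$ is continuous and its image is an interval whose infimum and supremum are $\inf|G|$ and $\|G\|_\infty$. Consequently the open interval $(\inf|G|,\|G\|_\infty)$ lies in the range of $|G|$, and since $v$ belongs to it there is $w_0\in D$ with $|G(w_0)|=v$, that is $|c\lambda^2|\,|G(w_0)|^2=1$ and $Jf_{\theta,\lambda,c}(w_0)=0$.

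By Lewy's theorem $f_{\theta,\lambda,c}$ is not locally univalent at $w_0$; since a globally injective map is injective on every neighbourhood, it cannot be univalent on $D$, and Proposition~\ref{prop:graphness_vs_univalence} then yields that $S_{\theta,\lambda,c}$ is not a graph. The only step demanding real care is this image-is-an-interval point: it is precisely the \emph{strictness} of the two inequalities that places $v$ in the interior of the range of $|G|$, guaranteeing that the value $1/|c\lambda^2|$ is genuinely attained by $|G|^2$ and hence that the Jacobian has an interior zero rather than merely approaching zero in the limit.
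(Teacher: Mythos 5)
Your proof is correct and follows essentially the same route as the paper: reduce graphness to univalence of $f_{\theta,\lambda,c}$, use Lemma~\ref{lem:dilatation} to see that the strict inequalities force $|c\lambda^2|\,|G(w_0)|^2=1$ at some point $w_0\in D$ (the paper states this existence without spelling out the intermediate-value argument you supply), and conclude via Lewy's theorem that the Jacobian's vanishing at $w_0$ kills local, hence global, univalence. The extra detail you give on why the value is actually attained is a faithful elaboration of the paper's one-line claim, not a different method.
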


\begin{proof}
If $(\theta,\lambda,c)\in \mathcal{P}$ satisfies the assumption, it holds that $|c\lambda^2| \inf|G|^2 < 1 < |c\lambda^2|  \| G \|_{\infty}^2$. Thus, there exists $w\in D$ such that $|c\lambda^2| |G(w)|^2=1$. By Lemma \ref{lem:dilatation}, we have $|\omega_{\theta,\lambda,c}(w)|=|c\lambda^2||G(w)|^2 = 1$, and this implies that the Jacobian $Jf_{\theta,\lambda,c}(w)=0$. Thus, $f_{\theta,\lambda,c}$ is not univalent by Lewy's theorem as explained in Remark \ref{rmk:necess_cond}.
\end{proof}

Theorem \ref{thm:main_krust2} claims that if the isotropic minimal surface $S_{\theta_0,\lambda_0,0}$ is a graph over a convex domain for some $(\theta_0,\lambda_0)$, then $S_{\theta,\lambda,c}$ is a graph for any $(\theta,\lambda,c)$ satisfying $|c\lambda^2|\leq 1/\| G \|_{\infty}^2$. According to the above simple observation, it is revealed that the estimation $|c\lambda^2|\leq 1/\| G \|_{\infty}^2$ in Theorem \ref{thm:main_krust2} is optimal, see Fig.  \ref{fig:Krust2_optimal}.

\begin{figure}[htbp]
       \begin{center}
           \includegraphics[scale=0.75]{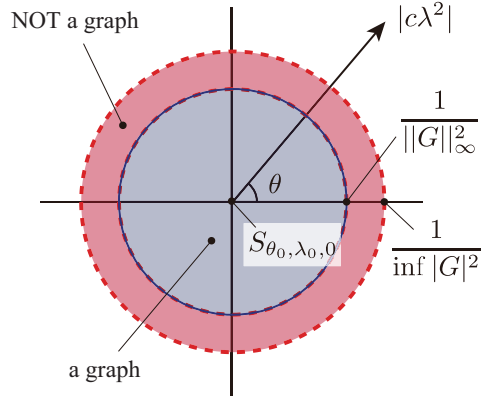} 
       \end{center}
       \caption{The estimation in Theorem \ref{thm:main_krust2} is optimal.}   \label{fig:Krust2_optimal}
\end{figure}
  
 In contrast, the estimation in Theorem \ref{thm:main_krust1} is not optimal in general, see the left side of Fig.  \ref{fig:Krust1_non_optimal}. However, for example, if $S_{\min}=S_{0,1,1}$ is a graph over a convex domain and $\| G \|_{\infty}=1$, then the estimation becomes optimal, see the right side of Fig.  \ref{fig:Krust1_non_optimal}.
  
\begin{figure}[htbp]
\hspace*{-5ex}
    \begin{tabular}{cc}
      %---- 最初の図 ---------------------------
      \begin{minipage}[t]{0.55\hsize}
        \centering
        \includegraphics[keepaspectratio, scale=0.75]{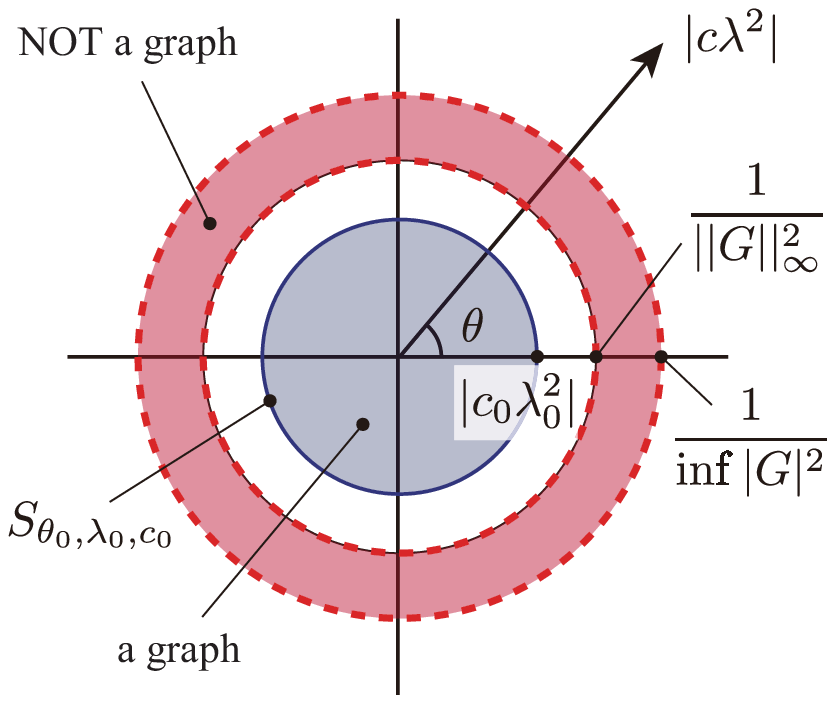}
        %\caption{Estimation in Theorem \ref{thm:main_krust1} is not optimal in general.}
      \end{minipage} &
      %---- 2番目の図 --------------------------
      \hspace*{-5ex}
      \begin{minipage}[t]{0.55\hsize}
        \centering
        \includegraphics[keepaspectratio, scale=0.75]{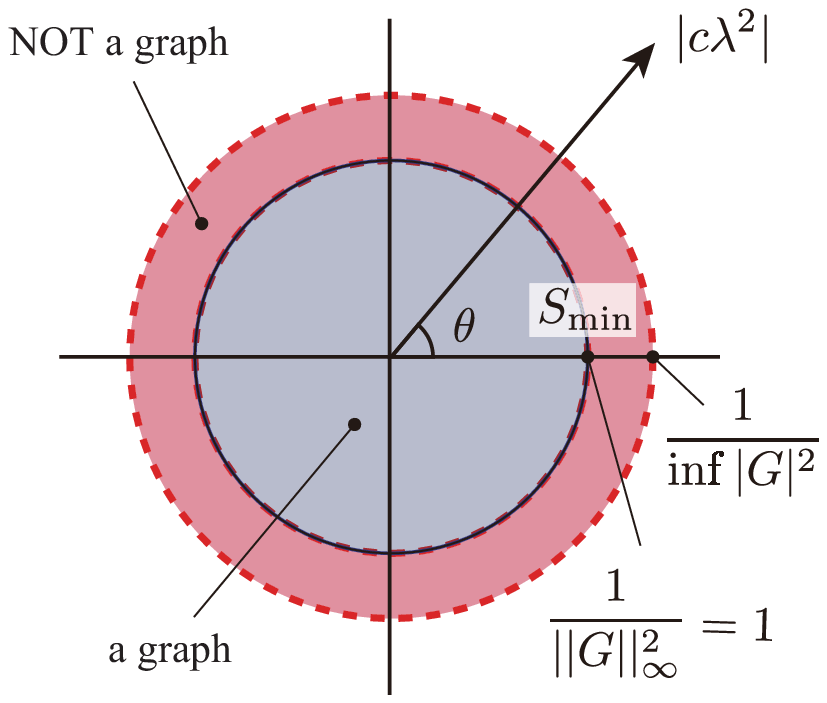}
        %\caption{Case where $S_{\min}=S_{0,1,1}$ is a graph over a convex domain and $\| G \|_{\infty}=1$.}
        %\label{fig:Krust1_optimal}
      \end{minipage}
      %---- 図はここまで ----------------------
    \end{tabular}
    \caption{Left: the estimation in Theorem \ref{thm:main_krust1} is not optimal in general. Right: if $S_{\min}=S_{0,1,1}$ is a graph over a convex domain and $\| G \|_{\infty}=1$, then the estimation in Theorem \ref{thm:main_krust1} becomes optimal.}\label{fig:Krust1_non_optimal}
\end{figure}

%============================================================ SECTION ====
\subsection{Generalization to graphs over non-convex domains.}
When we look at the minimal surface $X_{\theta, \lambda, 0}$ in $\mathbb{I}^3$ of the deformation family $X_{\theta, \lambda, c}$, there is a further generalization of Theorem \ref{thm:main_krust2} with slightly worse estimation. In this case, surprisingly, we can weaken the convexity assumption for the minimal graph in $\mathbb{I}^3$. To obtain the result, we use the next theorem (see also \cite{CH}).
\begin{theorem}[Partyka-Sakan-Zhu, {\cite[Theorem 2.8]{PSZ18}}]\label{thm:PSZ}
Let $h, g\colon \mathbb{D}\to \mathbb{C}$ be holomorphic functions such that $|h'|>|g'|$. If $h$ is  univalent and its image $h(\mathbb{D})$ is a rectifiably $M$-arcwise connected domain for some $M\geq 1$, then $f_{\varepsilon}:=h+\varepsilon \overline{g}$ is a univalent (more strongly, quasiconformal) harmonic mapping for any $\varepsilon \in \mathbb{C}$ with $|\varepsilon|\| \omega \|_{\infty}< 1/M$, where $\omega:=g'/h'$.
\end{theorem}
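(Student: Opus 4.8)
The plan is to transfer the problem from the disk $\mathbb{D}$, where the perturbation lives, to the image domain $\Omega := h(\mathbb{D})$, where the geometric hypothesis lives, by post-composing with $h^{-1}$. Since $h$ is univalent and holomorphic it is a conformal bijection of $\mathbb{D}$ onto $\Omega$ with holomorphic inverse, and because $h$ is injective, $f_\varepsilon = h + \varepsilon\overline{g}$ is univalent on $\mathbb{D}$ if and only if $F_\varepsilon := f_\varepsilon \circ h^{-1}$ is univalent on $\Omega$. Setting $\Psi := g\circ h^{-1}$, which is holomorphic on $\Omega$, one obtains the normal form
\[
    F_\varepsilon(z) = z + \varepsilon\,\overline{\Psi(z)}, \qquad z\in\Omega,
\]
so that $F_\varepsilon$ is a harmonic perturbation of the identity. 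By the chain rule $\Psi'(z) = \omega(h^{-1}(z))$, whence $\sup_{z\in\Omega}|\Psi'(z)| = \|\omega\|_\infty =: k$.

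Next I would exploit the rectifiable $M$-arcwise connectivity of $\Omega$, namely that any two distinct points $z_1,z_2\in\Omega$ can be joined by a rectifiable arc $\gamma\subset\Omega$ of length $\ell(\gamma)\leq M|z_1-z_2|$. Integrating $\Psi'$ along $\gamma$ yields
\[
    |\Psi(z_1)-\Psi(z_2)| = \left|\int_\gamma \Psi'(z)\,dz\right| \leq k\,\ell(\gamma) \leq kM|z_1-z_2|,
\]
so that, from the normal form and the reverse triangle inequality,
\[
    |F_\varepsilon(z_1)-F_\varepsilon(z_2)| \geq |z_1-z_2| - |\varepsilon|\,|\Psi(z_1)-\Psi(z_2)| \geq \bigl(1-|\varepsilon|kM\bigr)|z_1-z_2|.
\]
When $|\varepsilon|\|\omega\|_\infty = |\varepsilon|k < 1/M$ the factor $1-|\varepsilon|kM$ is strictly positive, so $F_\varepsilon(z_1)\neq F_\varepsilon(z_2)$ whenever $z_1\neq z_2$; hence $F_\varepsilon$, and therefore $f_\varepsilon$, is injective.

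Finally, to upgrade injectivity to the asserted quasiconformality I would compute the second Beltrami coefficient of the manifestly harmonic map $f_\varepsilon$. Since $(f_\varepsilon)_w = h'$ and $(f_\varepsilon)_{\overline{w}} = \varepsilon\,\overline{g'}$, the analytic dilatation is $\overline{(f_\varepsilon)_{\overline{w}}}/(f_\varepsilon)_w = \overline{\varepsilon}\,g'/h' = \overline{\varepsilon}\,\omega$, of supremum norm $|\varepsilon|k < 1/M \leq 1$; in particular the Jacobian $Jf_\varepsilon = |h'|^2\bigl(1-|\varepsilon\omega|^2\bigr)$ is everywhere positive, so $f_\varepsilon$ is orientation-preserving and locally univalent, and the uniform bound bounded away from $1$ makes it quasiconformal. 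The step I expect to be the crux — and indeed the only genuine content — is the arc-length estimate: it is precisely the quasiconvexity encoded in the constant $M$ that replaces the convexity hypotheses of Theorems \ref{thm:kalaj} and \ref{thm:PS} and allows the increment of $\Psi$ to be controlled linearly in $|z_1-z_2|$. Some care is also needed to confirm that $h^{-1}$ is single-valued and holomorphic (immediate from univalence of $h$) and that local univalence together with global injectivity makes $f_\varepsilon$ a homeomorphism onto its image.
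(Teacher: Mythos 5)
The paper does not prove this statement at all: it is quoted verbatim from Partyka--Sakan--Zhu \cite{PSZ18}*{Theorem 2.8} (with a pointer to \cite{CH}), so there is no in-paper argument to compare against. Your proof is correct and is essentially the classical Chuaqui--Hern\'andez linear-connectivity argument that underlies the cited result. Each step checks out: univalence of $f_{\varepsilon}$ on $\mathbb{D}$ is indeed equivalent to univalence of $F_{\varepsilon}=f_{\varepsilon}\circ h^{-1}$ on $\Omega=h(\mathbb{D})$ because $h$ is a conformal bijection; the chain rule gives $\Psi'=(g\circ h^{-1})'=\omega\circ h^{-1}$, so $\sup_{\Omega}|\Psi'|=\|\omega\|_{\infty}$; integrating $\Psi'$ along a rectifiable arc of length at most $M|z_1-z_2|$ (legitimate, since $\Psi$ is a global primitive of $\Psi'$ on $\Omega$) yields the Lipschitz bound $|\Psi(z_1)-\Psi(z_2)|\leq M\|\omega\|_{\infty}|z_1-z_2|$; and the reverse triangle inequality then forces injectivity when $|\varepsilon|\|\omega\|_{\infty}<1/M$. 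The quasiconformality upgrade is also sound: the dilatation of $f_{\varepsilon}$ has modulus $|\varepsilon||\omega|\leq|\varepsilon|\|\omega\|_{\infty}<1/M\leq 1$, uniformly bounded away from $1$, so the injective harmonic map $f_{\varepsilon}$ is orientation-preserving and quasiconformal onto its image. The only point worth making explicit is the degenerate case $\|\omega\|_{\infty}=0$ (where the hypothesis is vacuously satisfied for all $\varepsilon$ and $f_{\varepsilon}$ differs from $h$ by a constant), which your estimate handles automatically.
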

\noindent
Here, for any $M\geq 1$, a planar domain $\Omega\subset\mathbb{C}$ is said to be \textit{rectifiably $M$-arcwise connected} (or \textit{linearly connected} with constant $M$) if for all $z, w\in \Omega$ there exists a rectifiable arc $\gamma$ in $\Omega$ which joins $z$ and $w$ such that ${\rm Length}(\gamma)\leq M|z-w|$. By definition, a domain $\Omega$ is rectifiably $1$-arcwise connected if and only if it is convex. However, rectifiably $M$-arcwise connected domains are not necessarily convex, starlike or close-to-convex if $M>1$. By this theorem, the following result can be proved in the same way as Theorem \ref{thm:main_krust2}.
\begin{theorem}\label{thm:main_krust3}
Suppose that $G$ is not constant and $|G|<1$. If the minimal surface $S_{\theta_0,\lambda_0,0}$ in the isotropic $3$-space $\mathbb{I}^3$ is a graph over a rectifiably $M$-arcwise connected domain for some $M\geq 1$ and $(\theta_0,\lambda_0)$, then $S_{\theta,\lambda,c}$ is a graph for any $(\theta,\lambda,c)\in\mathcal{P}$ with $|c\lambda^2|< 1/(M\| G \|_{\infty}^2)$.
\end{theorem}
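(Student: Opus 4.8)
The plan is to mirror the proof of Theorem~\ref{thm:main_krust2} line for line, replacing the Partyka--Sakan theorem (Theorem~\ref{thm:PS}) by the Partyka--Sakan--Zhu theorem (Theorem~\ref{thm:PSZ}). Since the latter only requires $h(\mathbb{D})$ to be rectifiably $M$-arcwise connected rather than convex, but correspondingly concludes only univalence (indeed quasiconformality) rather than close-to-convexity, both the hypothesis and the conclusion are weakened in the expected way, and the admissible range becomes the strict inequality $|c\lambda^{2}|<1/(M\|G\|_{\infty}^{2})$.

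First I would record the dilatation input. Writing the canonical decomposition $f=h+\overline{g}$ with $h=\int^{w}F\,dw$ and $g=-\int^{w}G^{2}F\,dw$, Lemma~\ref{lem:dilatation} gives $\omega:=g'/h'=\omega_{0,1,1}=-G^{2}$. Hence $|h'|>|g'|$ on $D$ (because $|G|<1$) and $\|\omega\|_{\infty}=\|G\|_{\infty}^{2}$; this is exactly the pair of holomorphic functions required by Theorem~\ref{thm:PSZ}, and the hypothesis that $G$ be non-constant is what keeps $\omega$ non-constant.

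Second I would translate the hypothesis into a statement about $h$ alone. By \eqref{eq:planar_rep_X} the horizontal part of $X_{\theta_{0},\lambda_{0},0}$ is $\frac{e^{i\theta_{0}}}{\lambda_{0}}h$, so by Proposition~\ref{prop:graphness_vs_univalence} the surface $S_{\theta_{0},\lambda_{0},0}$ is a graph precisely over the domain $\frac{e^{i\theta_{0}}}{\lambda_{0}}h(D)$. The elementary point is that rectifiable $M$-arcwise connectedness is invariant under the similarity $z\mapsto\frac{e^{i\theta_{0}}}{\lambda_{0}}z$, since such a map rescales arc lengths and chord lengths by the same factor; thus the hypothesis is equivalent to saying that $h$ is univalent and that $h(D)$ is rectifiably $M$-arcwise connected.

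Finally I would apply Theorem~\ref{thm:PSZ} to $h$ and $g$ and take $\varepsilon=c\lambda^{2}e^{-2i\theta}$, so that $|\varepsilon|=|c\lambda^{2}|$ and $f_{\theta,\lambda,c}=h+\varepsilon\overline{g}$. The theorem yields univalence of $h+\varepsilon\overline{g}$ whenever $|\varepsilon|\|\omega\|_{\infty}=|c\lambda^{2}|\|G\|_{\infty}^{2}<1/M$, that is $|c\lambda^{2}|<1/(M\|G\|_{\infty}^{2})$, and Proposition~\ref{prop:graphness_vs_univalence} then shows $S_{\theta,\lambda,c}$ is a graph. The only step that is not pure bookkeeping is the passage from the unit disk to a general simply connected proper subdomain $D$ in Theorem~\ref{thm:PSZ}: exactly as in Remark~\ref{rmk:generality_domain}, one precomposes with a Riemann map $\Phi\colon\mathbb{D}\to D$, notes that $h\circ\Phi$ is univalent with the same image $h(D)$ (hence the same constant $M$) and that the dilatation of $(h\circ\Phi)+\overline{g\circ\Phi}$ equals $\omega\circ\Phi$ with $\|\omega\circ\Phi\|_{\infty}=\|\omega\|_{\infty}$, applies Theorem~\ref{thm:PSZ} to the composed map, and postcomposes with $\Phi^{-1}$. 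I expect this domain-transfer to be the main (and essentially the only) obstacle, and it is dispatched in the same manner that the corresponding remark already dispatches it for Theorems~\ref{thm:kalaj} and~\ref{thm:PS}.
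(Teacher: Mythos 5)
Your proposal is correct and follows exactly the route the paper intends: the paper gives no separate argument for this theorem, stating only that it ``can be proved in the same way as Theorem~\ref{thm:main_krust2}'' with Theorem~\ref{thm:PSZ} replacing Theorem~\ref{thm:PS}, which is precisely what you carry out. Your two added observations --- that rectifiable $M$-arcwise connectedness is similarity-invariant (so the hypothesis transfers from the graph's domain $\tfrac{e^{i\theta_0}}{\lambda_0}h(D)$ to $h(D)$ itself) and that the disk-to-$D$ transfer works as in Remark~\ref{rmk:generality_domain} --- are details the paper leaves implicit, and you handle them correctly.
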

It should be remarked that if $M=1$ and $\|G\|_{\infty}=1$ in Theorem \ref{thm:main_krust3}, then the estimation becomes $|c\lambda^2|<1$. Therefore we cannot see whether the minimal surfaces $S_{\theta, 1 ,1}$ and the maximal surfaces $S_{\theta,1,-1}$ are graphs or not. However they are actually graphs (over close-to-convex domains) by Theorem \ref{thm:main_krust2}, since a rectifiably $1$-arcwise connected domain is exactly a convex domain.

%\newpage
%============================================================ SECTION ====
\section{Examples}\label{sec:example}
We give here some examples to see how to apply the theorems in the previous sections.
\begin{example}\label{Ex:1}
Taking the Weierstrass data $(F, G)=(1, w^n)$ where $n$ is a positive integer, we obtain the deformation family of the Enneper-type surface
\[
X_{\theta,\lambda,c}(w)= \transpose{\left( \frac{e^{i\theta}}{\lambda}w-c\lambda e^{-i\theta}\frac{\overline{w}^{2n+1}}{2n+1}, \mathrm{Re}\left(\frac{2w^{n+1}}{n+1}\right) \right)} \in \mathbb{C}\times \mathbb{R},\quad  w\in \mathbb{D}
\]
by the equation \eqref{eq:planar_rep_X}. Let us define the planar harmonic mapping $f_{\theta, \lambda, c}$ and its boundary function $\gamma_{\theta, \lambda, c}$ as
\begin{align*}
f_{\theta, \lambda, c}(w)=w-\frac{c\lambda^2}{(2n+1)e^{i2\theta}}\overline{w}^{2n+1},\quad 
\gamma_{\theta, \lambda, c}(\varphi)= f_{\theta, \lambda, c}(e^{i\varphi}).
\end{align*}
Since $\gamma_{0,1,1}$ is a hypocycloid, see Fig.  \ref{Fig:Enneper1}, the domain of the minimal graph $X_{0,1,1}$ is not convex (but it is starlike), and we cannot apply the original Krust theorem (Theorem \ref{thm:Krust}) for the minimal graph $X_{0,1,1}$.
However, even in such a situation, $f_{0,1,0}(w)=h(w)=w$ on $\mathbb{D}$ is obviously univalent and convex. Hence, Theorem \ref{thm:main_krust2} implies that each surface $S_{\theta, \lambda, c}=X_{\theta, \lambda, c}(\mathbb{D})$ is also a graph over a close-to-convex domain under the condition $|c\lambda^2|\leq 1$, see Fig.  \ref{Fig:Enneper1} and Fig.  \ref{Fig:Enneper2}. 

Although, by Theorem \ref{thm:non_graph}, each surface $S_{\theta,\lambda, c}$ is no longer a graph when $|c\lambda^2|>1$, its restriction $S_{\theta,\lambda, c}^R=X_{\theta,\lambda, c}(\mathbb{D}_R)$ for $0<R <1$ is a graph when $1< |c\lambda^2|<1/R$ by Corollary \ref{cor:subgraph1}.

\begin{figure}[htbp]
\hspace*{-6ex}
    \begin{tabular}{ccc}
      %---- 最初の図 ---------------------------
      \begin{minipage}[t]{0.55\hsize}
        \centering
        \includegraphics[keepaspectratio, scale=0.45]{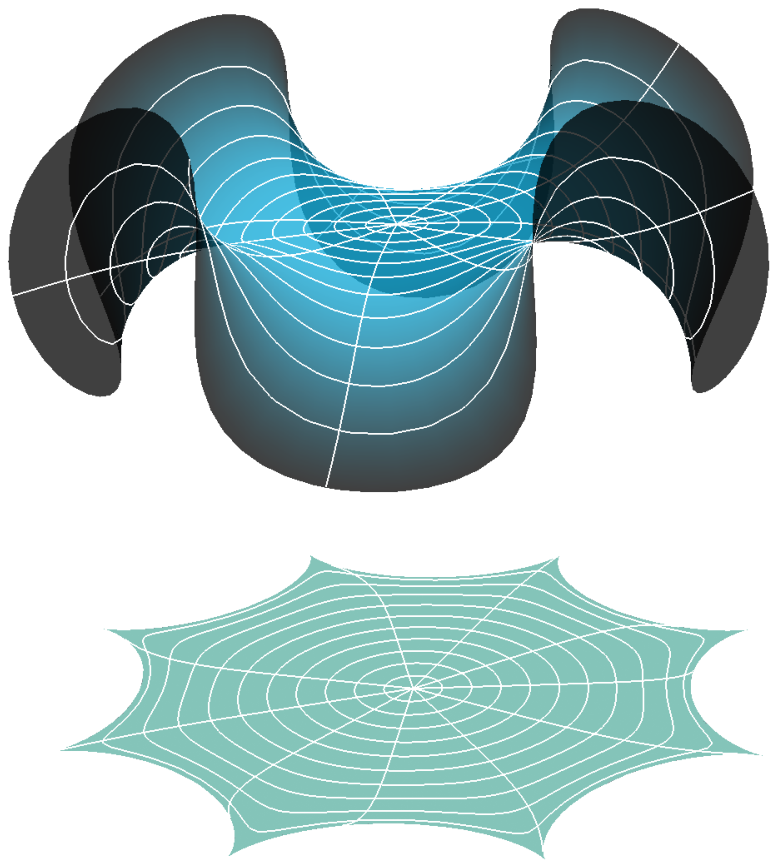}
              \end{minipage} 
      %---- 2番目の図 --------------------------
           \hspace*{-20ex}
      \begin{minipage}[t]{0.50\hsize}
        \centering
        \vspace{-4.15cm}
        \includegraphics[keepaspectratio, scale=0.40]{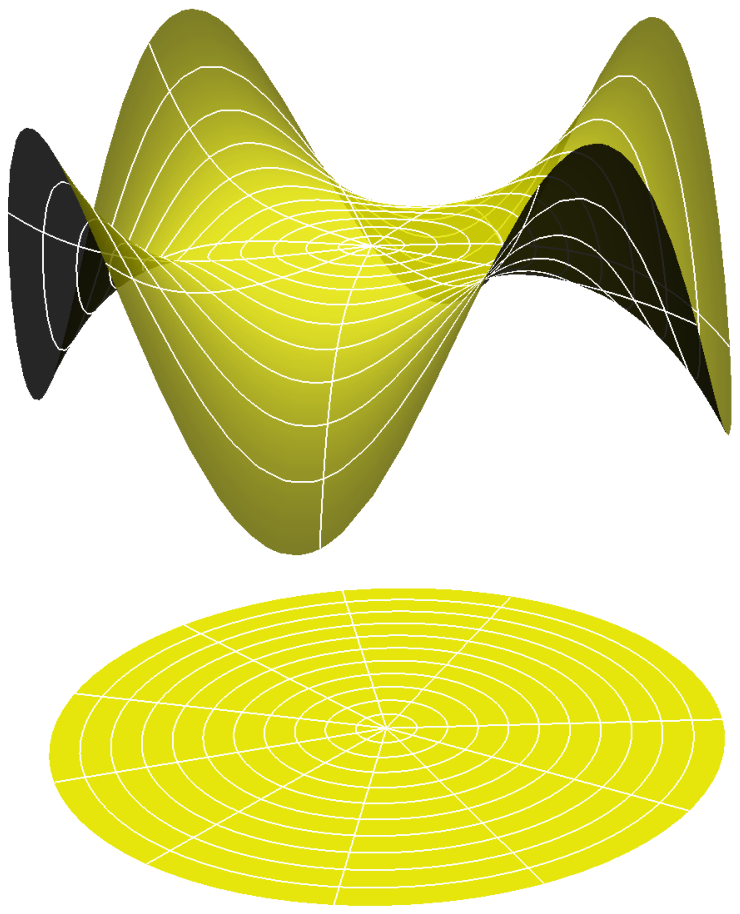}
            \end{minipage} 
            %---- 3番目の図 --------------------------
             \hspace*{-20ex}
        \begin{minipage}[t]{0.55\hsize}
        \centering
        \includegraphics[keepaspectratio, scale=0.45]{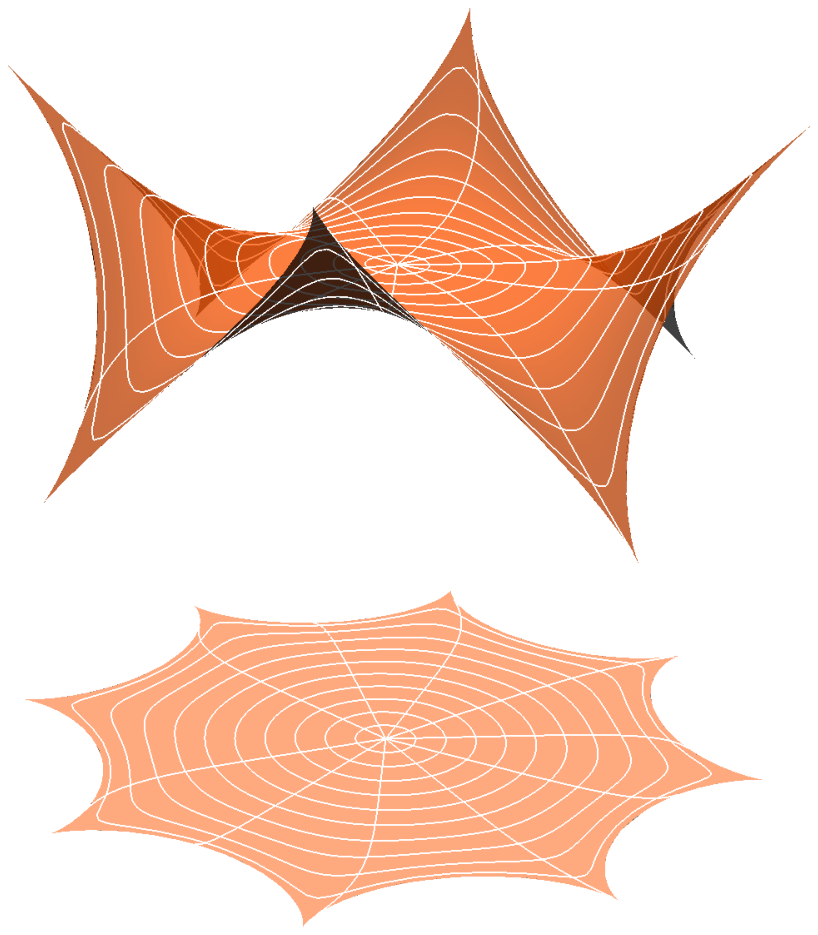}
           \end{minipage} 
                 %---- 図はここまで ----------------------
    \end{tabular}
       \caption{The Enneper-type minimal graph $X_{0, 1, 1}$ with $n=3$ (left) and its deformations $X_{0, 1, 0}$ (center) and $X_{0, 1, -1}$ (right).}
       \label{Fig:Enneper1}
\end{figure}

% \vspace{4.7cm}

\begin{figure}[htbp]
 \vspace{4.0cm}
\hspace*{-8ex}
    \begin{tabular}{cc}
      %---- 最初の図 ---------------------------
      \begin{minipage}[t]{0.55\hsize}
        \centering
                \vspace{-3.2cm}
        \includegraphics[keepaspectratio, scale=0.45]{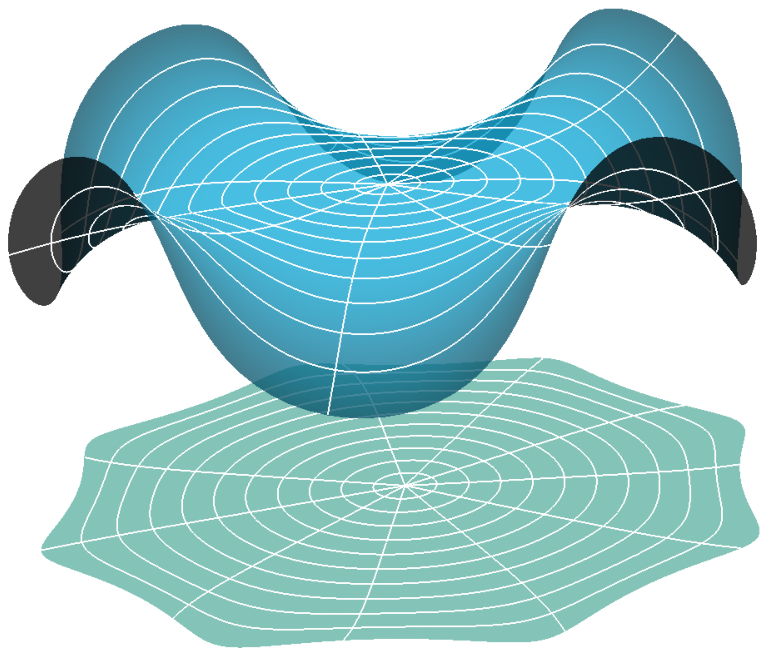}
              \end{minipage} 
      %---- 2番目の図 --------------------------
                 \hspace*{-20ex}
        \begin{minipage}[t]{0.55\hsize}
        \centering
            \vspace{-3.7cm}
        \includegraphics[keepaspectratio, scale=0.45]{Fig7.eps}
              \end{minipage} 
%---- 3番目の図 --------------------------
           \hspace*{-20ex}
      \begin{minipage}[t]{0.50\hsize}
        \centering
        \vspace{-4.5cm}
        \includegraphics[keepaspectratio, scale=0.50]{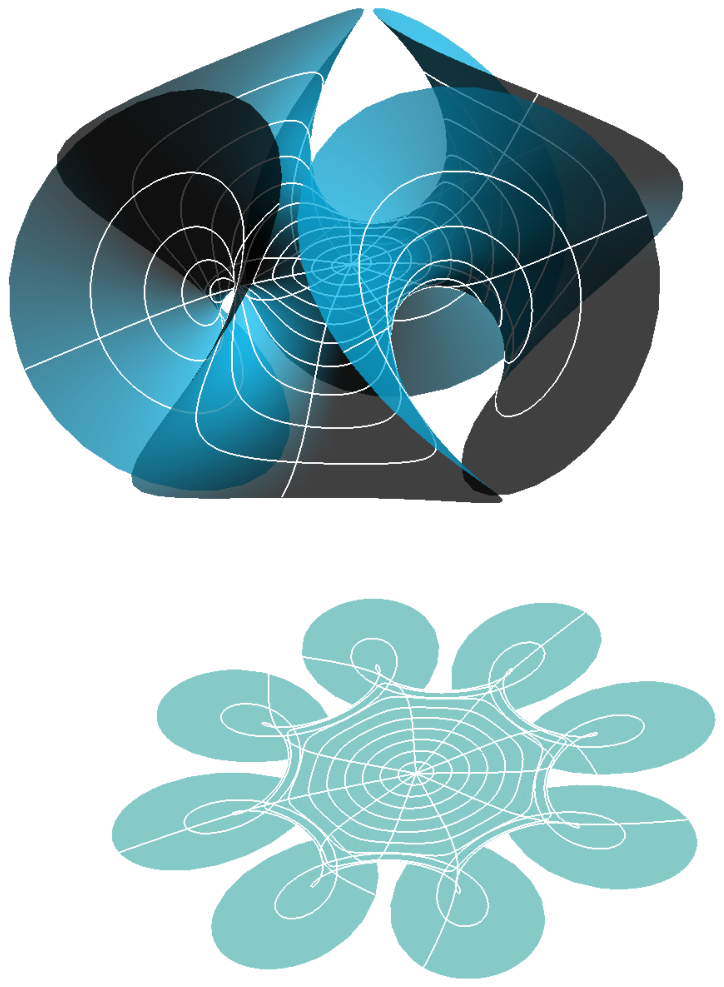}
            \end{minipage} 
                          %---- 図はここまで ----------------------
    \end{tabular}
       \caption{The L\'opez-Ros deformation  $X_{0,\lambda, 1}$ of $X_{0,1,1}$ (center) defined on $\mathbb{D}$. The left one is $X_{0,0.6,1}$ and the right one is $X_{0,2,1}$, which is not a graph because it does not satisfy $\lambda^2\leq 1$ on $\mathbb{D}$ .}
       \label{Fig:Enneper2}
  \end{figure}

\end{example}

\begin{example}\label{Ex:2}
Taking the Weierstrass data $(F, G)=(1,e^{nw})$ where $n$ is a positive integer, we obtain the deformation family 
\[
X_{\theta, \lambda, c}(w)=\transpose{\left( \frac{e^{i\theta}}{\lambda}w-\frac{c\lambda}{2ne^{i\theta}}e^{2n\overline{w}}, \frac{2}{n}\mathrm{Re}(e^{nw}) \right)},\quad  \mathrm{Re}{(w)}<0
\]
by the equation \eqref{eq:planar_rep_X}. Although the domain of the minimal graph $X_{0,1,1}$ is not even starlike, the domain of the graph $X_{0,1,0}$ is convex, see Fig.  \ref{Fig:Planar1}. Hence, Theorem \ref{thm:main_krust2} implies that each surface $S_{\theta, \lambda, c}$ is also a graph over a close-to-convex domain under the condition $|c\lambda^2|\leq 1$, see Fig.  \ref{Fig:Planar1} and Fig.  \ref{Fig:Planar2}. 

\begin{figure}[htbp]
\hspace*{-6ex}
    \begin{tabular}{cc}
      %---- 最初の図 ---------------------------
      \begin{minipage}[t]{0.55\hsize}
        \centering
        \includegraphics[keepaspectratio, scale=0.40]{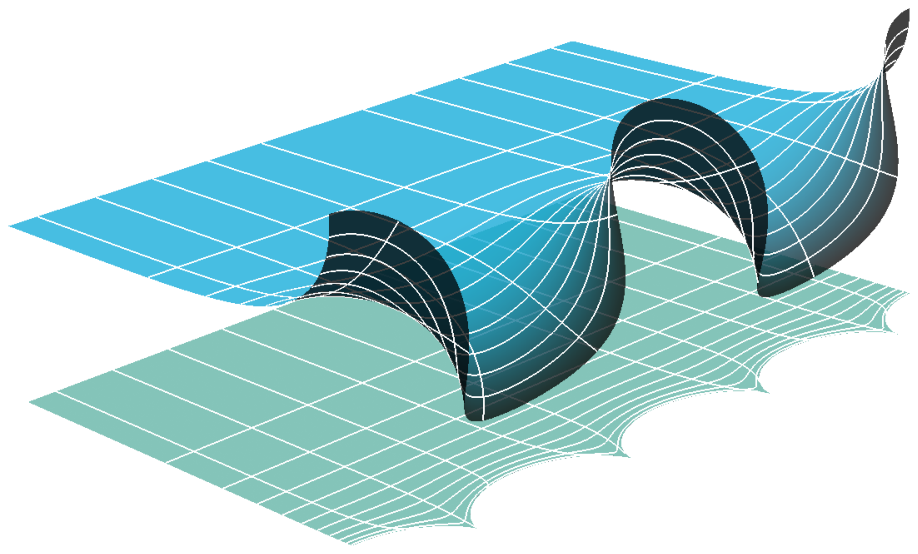}
              \end{minipage} 
      %---- 2番目の図 --------------------------
           \hspace*{-22ex}
      \begin{minipage}[t]{0.55\hsize}
        \centering
        \vspace{-3.1cm}
        \includegraphics[keepaspectratio, scale=0.31]{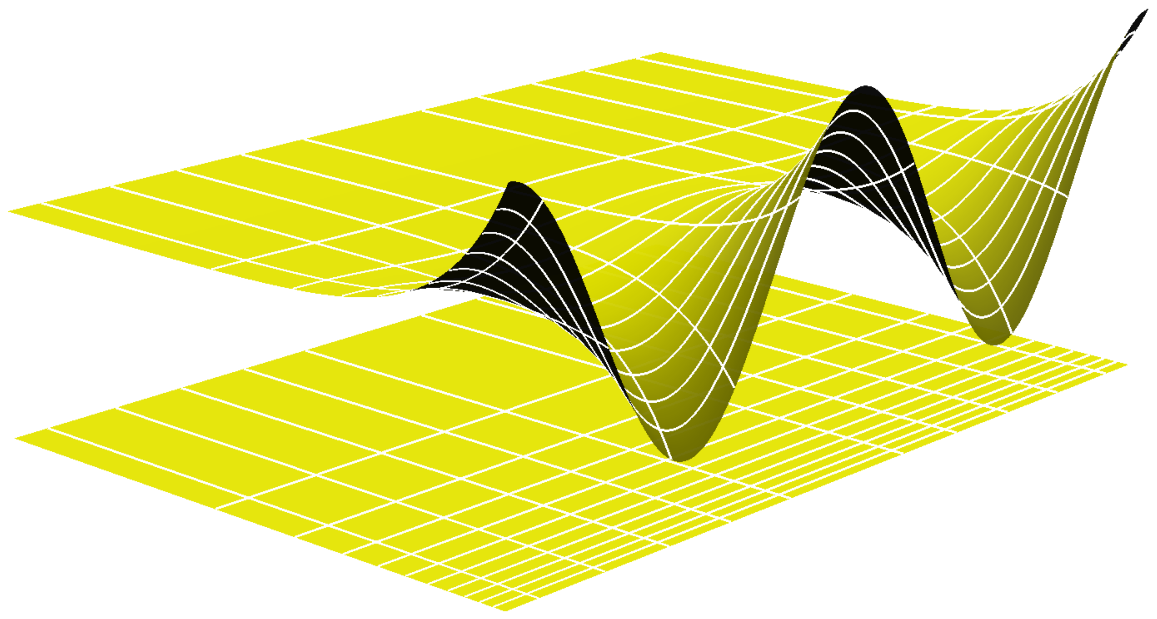}
            \end{minipage} 
            %---- 3番目の図 --------------------------
             \hspace*{-22ex}
        \begin{minipage}[t]{0.55\hsize}
        \centering
                \vspace{-3.1cm}
        \includegraphics[keepaspectratio, scale=0.33]{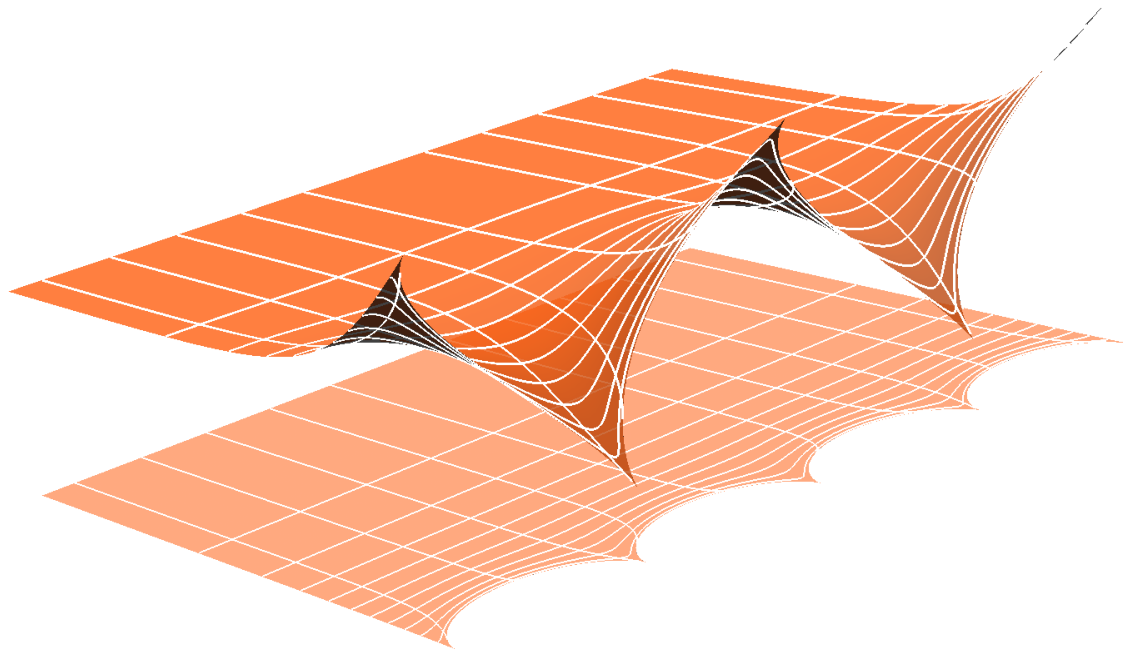}
           \end{minipage} 
                 %---- 図はここまで ----------------------
    \end{tabular}
       \caption{The minimal graph $X_{0, 1, 1}$ in Example \ref{Ex:2} with $n=2$ (left) and its deformations $X_{0, 1, 0}$ (center) and $X_{0, 1, -1}$ (right).}
       \label{Fig:Planar1}
  \end{figure}

\begin{figure}[htbp]
\vspace*{5ex}
\hspace*{-8ex}
    \begin{tabular}{cc}
      %---- 最初の図 ---------------------------
      \begin{minipage}[t]{0.55\hsize}
        \centering
        \includegraphics[keepaspectratio, scale=0.35]{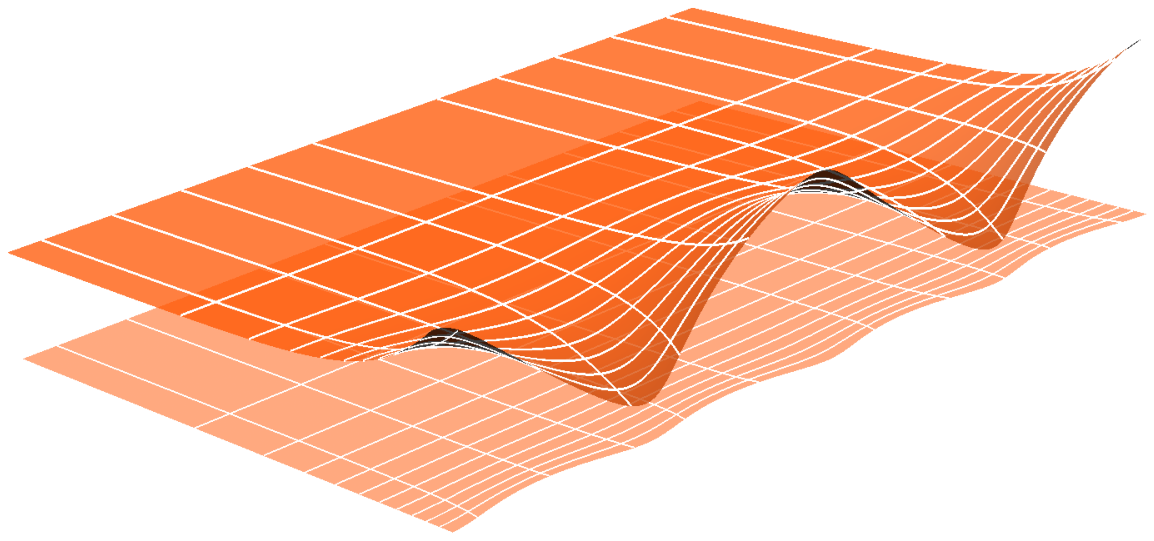}
              \end{minipage} 
      %---- 2番目の図 --------------------------
                 \hspace*{-20ex}
        \begin{minipage}[t]{0.55\hsize}
        \centering
        \includegraphics[keepaspectratio, scale=0.35]{Fig14.eps}
              \end{minipage} 
%---- 3番目の図 --------------------------
           \hspace*{-20ex}
      \begin{minipage}[t]{0.55\hsize}
        \centering
        \vspace{-3.5cm}
        \includegraphics[keepaspectratio, scale=0.35]{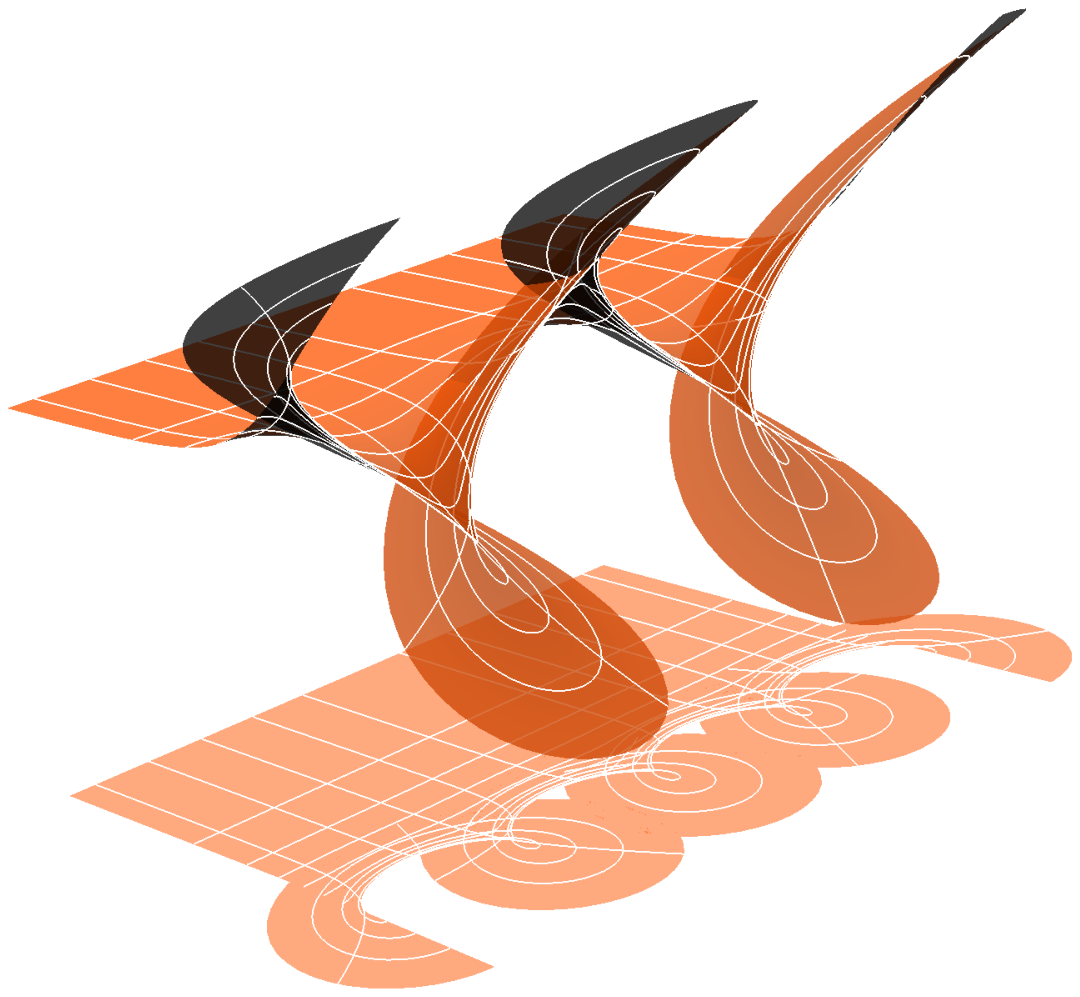}
            \end{minipage} 
                          %---- 図はここまで ----------------------
    \end{tabular}
       \caption{The L\'opez-Ros deformation  $X_{0,\lambda, -1}$ of $X_{0,1,-1}$ defined on $\mathbb{D}$ (center). The left one is $X_{0,0.5,-1}$ and the right one is $X_{0,2.3,-1}$, which is not a graph because it does not satisfy $\lambda^2\leq 1$ on $\mathbb{D}$.}
       \label{Fig:Planar2}
  \end{figure}
\end{example}

\begin{example}\label{Ex:3}
Taking the Weierstrass data $(F, G)=(\frac{4}{1-w^4}, w)$ where $n$ is a positive integer, we obtain the deformation family of the Scherk surface
\[
X_{\theta, \lambda, c}(w)=\transpose{\left( \frac{e^{i\theta}}{\lambda} f_{\theta,\lambda, c}(w), 2\log{\left|\frac{1+w^2}{1-w^2}\right|} \right)},\quad  w\in \mathbb{D},
\]
where $f_{\theta,\lambda, c}$ is 
\begin{align*}
f_{\theta,\lambda, c}(w)=&\log{\left(\frac{1+w}{1-w}\right)} +i \log{\left(\frac{1-iw}{1+iw}\right)} \\
&+\frac{c\lambda^2}{e^{i2\theta}}\left\{  \log{\left(\frac{1-\overline{w}}{1+\overline{w}}\right)} -i \log{\left(\frac{1+i\overline{w}}{1-i\overline{w}}\right)} 
 \right\} . 
\end{align*}
Since the graph $X_{0,1,0}$ is defined over a convex domain, see Fig.  \ref{Fig:Scherk1}, Theorem \ref{thm:main_krust1} implies that each surface $S_{\theta, \lambda, c}=X_{\theta, \lambda, c}(\mathbb{D})$ is also a graph over a close-to-convex domain under the condition $|c\lambda^2|\leq 1$.%, see Fig.  \ref{Fig:Scherk1} and Fig.  \ref{Fig:Scherk2}. 

\begin{figure}[htbp]
\hspace*{-5ex}
    \begin{tabular}{cc}
      %---- 最初の図 ---------------------------
      \begin{minipage}[t]{0.55\hsize}
        \centering
        \includegraphics[keepaspectratio, scale=0.35]{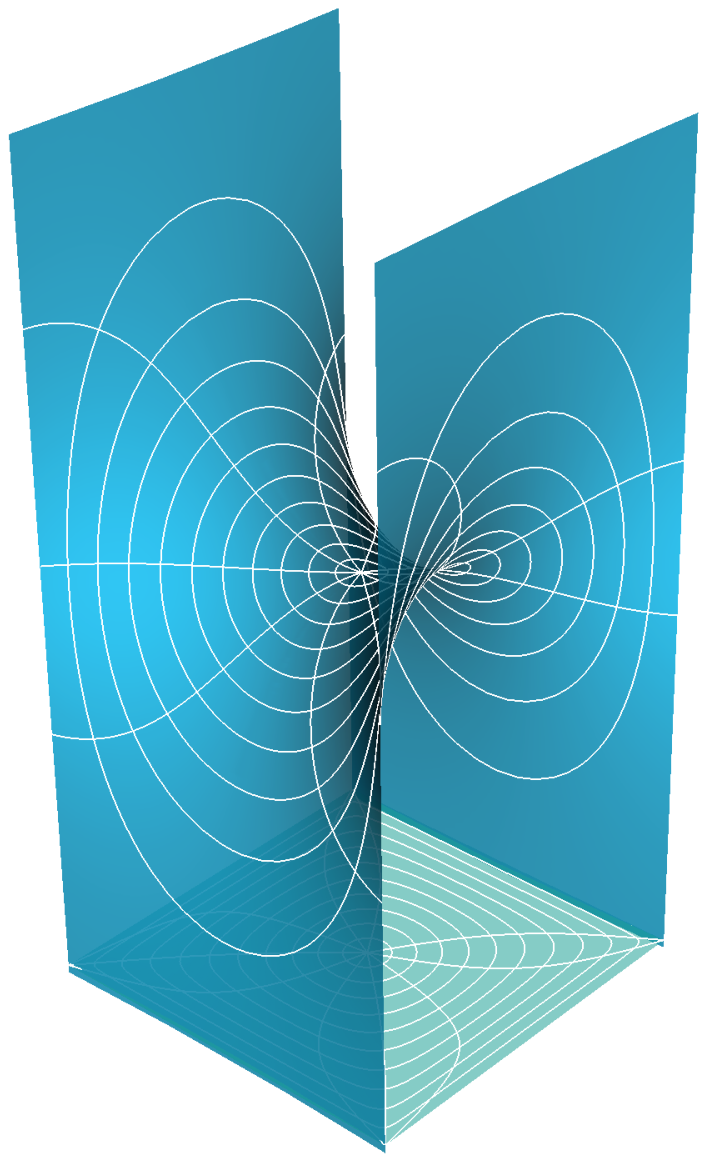}
              \end{minipage} 
      %---- 2番目の図 --------------------------
           \hspace*{-24ex}
      \begin{minipage}[t]{0.55\hsize}
        \centering
        \vspace{-4.8cm}
        \includegraphics[keepaspectratio, scale=0.45]{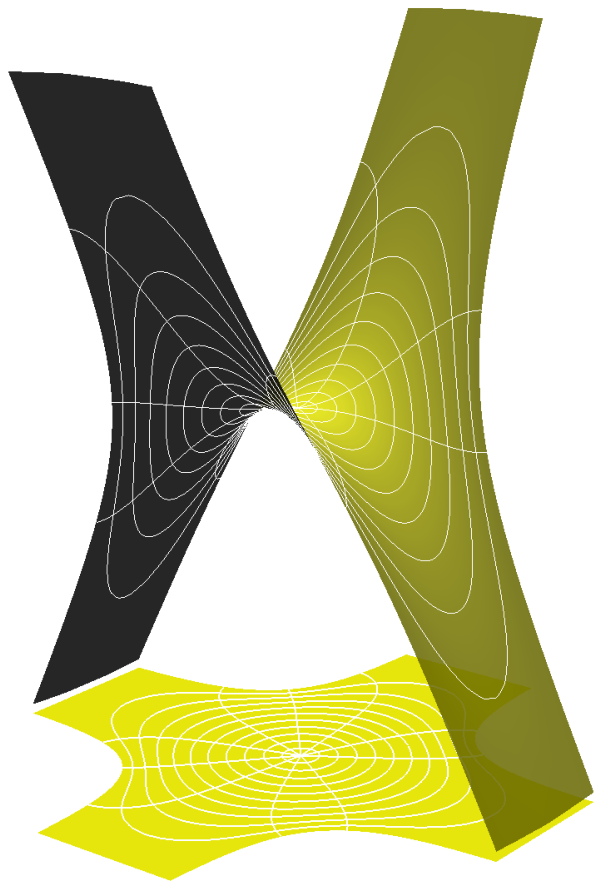}
            \end{minipage} 
            %---- 3番目の図 --------------------------
             \hspace*{-15ex}
        \begin{minipage}[t]{0.55\hsize}
        \centering
             \vspace{-4.2cm}
        \includegraphics[keepaspectratio, scale=0.50]{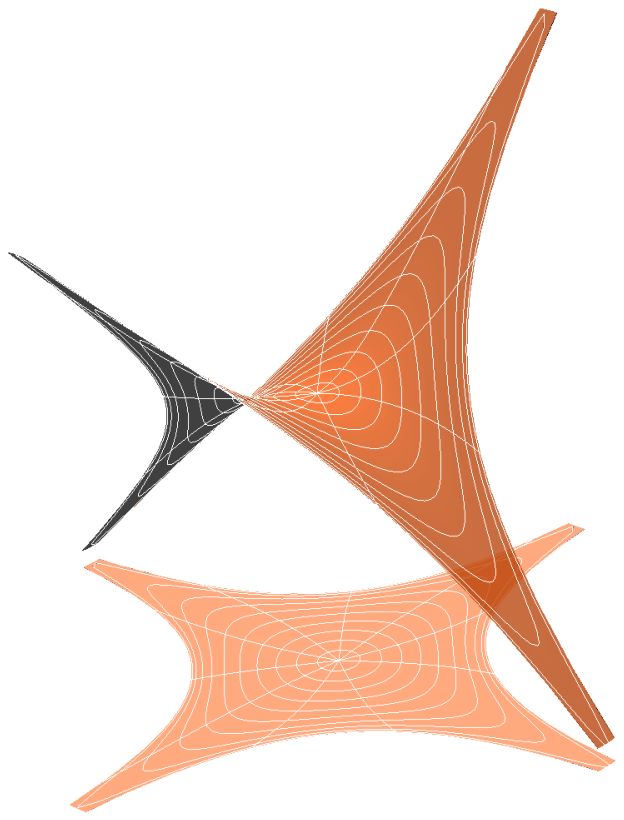}
           \end{minipage} 
                 %---- 図はここまで ----------------------
    \end{tabular}
       \caption{The minimal graph $X_{0, 1, 1}$ in Example \ref{Ex:3} (left) and its deformations $X_{0, 1, 0}$ (center) and $X_{0, 1, -1}$ (right).}
       \label{Fig:Scherk1}
  \end{figure}
  
\end{example}

%%%%%%%%%%%%%%%%%

\begin{acknowledgement}
The authors would like to express their gratitude to Professor Ken-ichi Sakan for his helpful comments and giving us information on his article \cite{PS}, and the referee for his/her careful reading of the submitted version of the manuscript and fruitful comments and suggestions.
\end{acknowledgement}

%================================REFERENCES=======================================

%\bibliographystyle{amsxport}  \bibliography{AF2combined} 

% \bib, bibdiv, biblist are defined by the amsrefs package.
%\bibliographystyle{amsxport}  \bibliography{AF3} 

% \bib, bibdiv, biblist are defined by the amsrefs package.

\begin{bibdiv}
\begin{biblist}

\bib{AF}{unpublished}{
      author={Akamine, S.},
      author={Fujino, H.},
       title={Duality of boundary value problems for minimal and maximal
  surfaces},
        note={arXiv: 1909.00975},
}

\bib{AL}{article}{
      author={Ara\'{u}jo, H.},
      author={Leite, M.~L.},
       title={How many maximal surfaces do correspond to one minimal surface?},
        date={2009},
        ISSN={0305-0041},
     journal={Math. Proc. Cambridge Philos. Soc.},
      volume={146},
      number={1},
       pages={165\ndash 175},
         url={https://doi.org/10.1017/S0305004108001722},
      review={\MR{2461875}},
}

\bib{Bonnet}{article}{
      author={Bonnet, O.},
       title={Note sur la th\'eorie g\'en\'erale des surfaces},
        date={1853},
     journal={Comptes rendus des S\'eances de l'Acad\'emie des Sciences},
      volume={37},
       pages={529\ndash 532},
}

\bib{C}{inproceedings}{
      author={Calabi, E.},
       title={Examples of {B}ernstein problems for some nonlinear equations},
        date={1970},
   booktitle={Global {A}nalysis ({P}roc. {S}ympos. {P}ure {M}ath., {V}ol. {XV},
  {B}erkeley, {C}alif., 1968)},
   publisher={Amer. Math. Soc., Providence, R.I.},
       pages={223\ndash 230},
      review={\MR{0264210}},
}

\bib{CH}{article}{
   author={Chuaqui, M.},
   author={Hern\'{a}ndez, R.},
   title={Univalent harmonic mappings and linearly connected domains},
   journal={J. Math. Anal. Appl.},
   volume={332},
   date={2007},
   number={2},
   pages={1189--1194},
   issn={0022-247X},
   review={\MR{2324329}},
   doi={10.1016/j.jmaa.2006.10.086},
}

\bib{CS84}{article}{
      author={Clunie, J.},
      author={Sheil-Small, T.},
       title={Harmonic univalent functions},
        date={1984},
        ISSN={0066-1953},
     journal={Ann. Acad. Sci. Fenn. Ser. A I Math.},
      volume={9},
       pages={3\ndash 25},
         url={https://doi.org/10.5186/aasfm.1984.0905},
      review={\MR{752388}},
}

\bib{Si}{article}{
   author={da Silva, L. C. B.},
   title={Holomorphic representation of minimal surfaces in simply isotropic
   space},
   journal={J. Geom.},
   volume={112},
   date={2021},
   number={3},
   pages={Paper No. 35, 21},
   issn={0047-2468},
   review={\MR{4318435}},
   %doi={10.1007/s00022-021-00598-z},
}

\bib{Danciger}{book}{
      author={Danciger, J.},
       title={Geometric {T}ransitions: {F}rom {H}yperbolic to {A}ds
  {G}eometry},
   publisher={ProQuest LLC, Ann Arbor, MI},
        date={2011},
        ISBN={979-8672-14122-0},
  url={http://gateway.proquest.com/openurl?url_ver=Z39.88-2004&rft_val_fmt=info:ofi/fmt:kev:mtx:dissertation&res_dat=xri:pqm&rft_dat=xri:pqdiss:28168834},
        note={Thesis (Ph.D.)--Stanford University},
      review={\MR{4172323}},
}

\bib{DHS}{book}{
      author={Dierkes, U.},
      author={Hildebrandt, S.},
      author={Sauvigny, F.},
       title={Minimal surfaces},
     edition={second},
      series={Grundlehren der Mathematischen Wissenschaften [Fundamental
  Principles of Mathematical Sciences]},
   publisher={Springer, Heidelberg},
        date={2010},
      volume={339},
        ISBN={978-3-642-11697-1},
         url={https://doi.org/10.1007/978-3-642-11698-8},
        note={With assistance and contributions by A. K\"{u}ster and R. Jakob},
      review={\MR{2566897}},
}

\bib{Dorff}{article}{
      author={Dorff, M.},
       title={Minimal graphs in {${\Bbb R}^3$} over convex domains},
        date={2004},
        ISSN={0002-9939},
     journal={Proc. Amer. Math. Soc.},
      volume={132},
      number={2},
       pages={491\ndash 498},
         url={https://doi.org/10.1090/S0002-9939-03-07109-0},
      review={\MR{2022374}},
}

\bib{D}{book}{
      author={Duren, P.},
       title={Harmonic mappings in the plane},
      series={Cambridge Tracts in Mathematics},
   publisher={Cambridge University Press, Cambridge},
        date={2004},
      volume={156},
        ISBN={0-521-64121-7},
         url={https://doi.org/10.1017/CBO9780511546600},
      review={\MR{2048384}},
}

\bib{ER}{article}{
      author={Estudillo, F. J.~M.},
      author={Romero, A.},
       title={Generalized maximal surfaces in {L}orentz-{M}inkowski space
  {$L^3$}},
        date={1992},
        ISSN={0305-0041},
     journal={Math. Proc. Cambridge Philos. Soc.},
      volume={111},
      number={3},
       pages={515\ndash 524},
         url={https://doi.org/10.1017/S0305004100075587},
      review={\MR{1151327}},
}

\bib{Goursat}{article}{
      author={Goursat, E.},
       title={Sur un mode de transformation des surfaces minima},
        date={1887},
        ISSN={0001-5962},
     journal={Acta Math.},
      volume={11},
      number={1--4},
       pages={257\ndash 264},
         url={https://doi.org/10.1007/BF02418050},
        note={Second M\'{e}moire},
      review={\MR{1554756}},
}

\bib{Kalaj}{article}{
      author={Kalaj, D.},
       title={Quasiconformal harmonic mappings and close-to-convex domains},
        date={2010},
        ISSN={0354-5180},
     journal={Filomat},
      volume={24},
      number={1},
       pages={63\ndash 68},
         url={https://doi.org/10.2298/FIL1001063K},
      review={\MR{2796459}},
}

\bib{Kar}{incollection}{
      author={Karcher, H.},
       title={Construction of minimal surfaces},
        date={in ''Surveys in Geometry'', Univ. of Tokyo, 1989, and Lecture
  Notes No. 12, SFB 256, Bonn, 1989},
       pages={1\ndash 96},
}

\bib{K2}{article}{
      author={Kobayashi, O.},
       title={Maximal surfaces with conelike singularities},
        date={1984},
        ISSN={0025-5645},
     journal={J. Math. Soc. Japan},
      volume={36},
      number={4},
       pages={609\ndash 617},
         url={https://doi.org/10.2969/jmsj/03640609},
      review={\MR{759417}},
}

\bib{Lee}{article}{
      author={Lee, H.},
       title={Extensions of the duality between minimal surfaces and maximal
  surfaces},
        ISSN={0046-5755},
     journal={Geom. Dedicata},
       pages={373\ndash 386},
      review={\MR{2780757}},
}

\bib{LLS}{article}{
      author={L\'{o}pez, F.~J.},
      author={L\'{o}pez, R.},
      author={Souam, R.},
       title={Maximal surfaces of {R}iemann type in {L}orentz-{M}inkowski space
  {$\Bbb L^3$}},
        date={2000},
        ISSN={0026-2285},
     journal={Michigan Math. J.},
      volume={47},
      number={3},
       pages={469\ndash 497},
         url={https://doi.org/10.1307/mmj/1030132590},
      review={\MR{1813540}},
}

\bib{LR}{article}{
      author={L\'{o}pez, F.J.},
      author={Ros, A.},
       title={On embedded complete minimal surfaces of genus zero},
        date={1991},
        ISSN={0022-040X},
     journal={J. Differential Geom.},
      volume={33},
      number={1},
       pages={293\ndash 300},
         url={http://projecteuclid.org/euclid.jdg/1214446040},
      review={\MR{1085145}},
}

\bib{RLopez}{article}{
      author={L\'{o}pez, R.},
       title={The {L}orentzian version of a theorem of {K}rust},
        date={2021},
        ISSN={0126-6705},
     journal={Bull. Malays. Math. Sci. Soc.},
      volume={44},
      number={2},
       pages={683\ndash 692},
      review={\MR{4217079}},
}

\bib{MaEtal}{article}{
      author={Ma, X.},
      author={Wang, C.},
      author={Wang, P.},
       title={Global geometry and topology of spacelike stationary surfaces in
  the 4-dimensional {L}orentz space},
        date={2013},
        ISSN={0001-8708},
     journal={Adv. Math.},
      volume={249},
       pages={311\ndash 347},
      review={\MR{3116574}},
}

\bib{O}{book}{
      author={Osserman, R.},
       title={A survey of minimal surfaces},
     edition={Second},
   publisher={Dover Publications, Inc., New York},
        date={1986},
        ISBN={0-486-64998-9},
      review={\MR{852409}},
}

\bib{PS}{article}{
      author={Partyka, D.},
      author={Sakan, K.},
       title={Injectivity of harmonic mappings with a specified injective holomorphic part},
     journal={Ann. Acad. Sci. Fenn. Math.},
       date={2022},
        volume={47},
      number={1},
     pages={573\ndash586},
     review={\MR{4400869}}     
}

\bib{PSZ18}{article}{
      author={Partyka, D.},
      author={Sakan, K.},
      author={Zhu, J.},
       title={Quasiconformal harmonic mappings with the convex holomorphic part},
        date={2018},
        ISSN={1239-629X},
     journal={Ann. Acad. Sci. Fenn. Math.},
      volume={43},
      number={1},
       pages={401\ndash 418},
         url={https://doi.org/10.5186/aasfm.2018.4355},
      review={\MR{3753182}},
      note={Erratum: Ann. Acad. Sci. Fenn. Math. 43 (2018), no. 2, 1085--1086},
}

\bib{Pember}{article}{
      author={Pember, M.},
       title={Weierstrass-type representations},
        date={2020},
        ISSN={0046-5755},
     journal={Geom. Dedicata},
      volume={204},
       pages={299\ndash 309},
      review={\MR{4056704}},
}

\bib{Pommerenke}{book}{
      author={Pommerenke, Ch.},
       title={Boundary behaviour of conformal maps},
      series={Grundlehren der Mathematischen Wissenschaften [Fundamental
  Principles of Mathematical Sciences]},
   publisher={Springer-Verlag, Berlin},
        date={1992},
      volume={299},
        ISBN={3-540-54751-7},
         url={https://doi.org/10.1007/978-3-662-02770-7},
      review={\MR{1217706}},
}

\bib{Sato}{article}{
      author={Sato, Y.},
       title={{$d$}-minimal surfaces in three-dimensional singular
  semi-{E}uclidean space {$\Bbb R^{0,2,1}$}},
        date={2021},
        ISSN={0049-2930},
     journal={Tamkang J. Math.},
      volume={52},
      number={1},
       pages={37\ndash 67},
         url={https://doi.org/10.5556/j.tkjm.52.2021.3045},
      review={\MR{4209565}},
}

\bib{SY}{article}{
   author={Seo, J. J.},
   author={Yang, S.-D.},
   title={Zero mean curvature surfaces in isotropic three-space},
   journal={Bull. Korean Math. Soc.},
   volume={58},
   date={2021},
   number={1},
   pages={1--20},
   issn={1015-8634},
   review={\MR{4206079}},
   %doi={10.4134/BKMS.b190783},
}

\bib{Strubecker}{article}{
      author={Strubecker, K.},
       title={Differentialgeometrie des isotropen {R}aumes. {III}.
  {F}l\"{a}chentheorie},
        date={1942},
        ISSN={0025-5874},
     journal={Math. Z.},
      volume={48},
       pages={369\ndash 427},
      review={\MR{0009145}},
}

\bib{UY92}{article}{
      author={Umehara, M.},
      author={Yamada, K.},
       title={A parametrization of the {W}eierstrass formulae and perturbation
  of complete minimal surfaces in {$\bold R^3$} into the hyperbolic
  {$3$}-space},
        date={1992},
        ISSN={0075-4102},
     journal={J. Reine Angew. Math.},
      volume={432},
       pages={93\ndash 116},
         url={https://doi.org/10.1515/crll.1992.432.93},
      review={\MR{1184761}},
}

\bib{UY1}{article}{
      author={Umehara, M.},
      author={Yamada, K.},
       title={Maximal surfaces with singularities in {M}inkowski space},
        date={2006},
        ISSN={0385-4035},
     journal={Hokkaido Math. J.},
      volume={35},
      number={1},
       pages={13\ndash 40},
         url={https://doi.org/10.14492/hokmj/1285766302},
      review={\MR{2225080}},
}

\end{biblist}
\end{bibdiv}

\end{document}